\documentclass[final,hidelinks,onefignum,onetabnum]{siamart220329}

\usepackage{amscd,amssymb, amsmath,amsfonts, wasysym, mathrsfs, enumerate, mathtools,hhline,xcolor}%
\usepackage{graphicx}
\usepackage[all, cmtip]{xy}
\usepackage{multirow}

\definecolor{hot}{RGB}{65,105,225}
\usepackage{algorithm}
\usepackage[noend]{algpseudocode}

\usepackage{hyperref}
\hypersetup{
	plainpages=false,
    colorlinks,
    linktocpage=true,
    linkcolor=hot,
    citecolor=hot,
    urlcolor=hot
}
\usepackage[hyperpageref]{backref}

\usepackage[colorinlistoftodos,prependcaption,textsize=footnotesize]{todonotes}

\makeatletter
\@mparswitchfalse%
\makeatother
\normalmarginpar %

\newtheorem{remark}[theorem]{Remark}
\newtheorem{assumption}[theorem]{Assumption}
\newtheorem{example}[theorem]{Example}

\begin{document}

\title{The geometric convergence of a parallel domain decomposition method on manifolds}

\author{Lizhen Qin \thanks{School of Mathematics, Nanjing University, Nanjing, Jiangsu, China
  (\email{qinlz@nju.edu.cn}).}
}

\maketitle

\headers{DDM on Manifolds}{L. Qin}

\begin{abstract}
This paper establishes a convergence theory for a continuous domain decomposition method for elliptic equations on manifolds. This method originated in \cite{lions2} in the setting of Euclidean domains and was later adapted and generalized to manifolds by \cite{qin_wang_wang}. Although its convergence was known, whether the convergence is geometric has remained open. We prove its geometric convergence and provide various estimates on the convergence rate.
\end{abstract}

\begin{AMS}{Primary 65N55; Secondary 58J05.}
\end{AMS}

\begin{keywords}{Riemannian manifolds, elliptic problems, domain decomposition methods, geometric convergence, convergence rate}
\end{keywords}

\section{Introduction}\label{sec_introduction}
Elliptic equations on Riemannian manifolds play an important role in both analysis and geometry (see e.g.,~\cite{Aubin,schoen_yau}).
They arise in many areas, such as image processing, multifluid dynamics, micromagnetics, and theoretical physics (see e.g.,~\cite{bachini2021intrinsic,bonito2020divergence,dobrev2010surface,Holst2018,Holst2016Wave,Holst_Stern,jankuhn2021error,jin2021gradient,mohamed2005finite,reuter2009discrete}). This paper presents an in-depth convergence analysis of a domain decomposition method (DDM) for second-order linear elliptic equations on manifolds at the continuous level. This DDM, formulated in Algorithm \ref{alg_continuous_parallel} below, was originally proposed in \cite[Algorithm~2.1]{qin_wang_wang} and has been numerically implemented in \cite{qin_wang_wang,jiang_qin_wang}.

When the target manifold $M$ is a two-dimensional Riemannian submanifold of $\mathbb{R}^{3}$, i.e.,~a surface, the numerical methods to solve differential equations on $M$ have been extensively studied over many years (see e.g.,~\cite{baumgardner_frederickson,dziuk88,dziuk91,nedelec,nedelec_planchard}). A conventional and popular approach is to solve the equations by finite element methods (FEMs) based on a global grid of $M$. Such a grid can be obtained by polyhedral approximation in $\mathbb{R}^{3}$. This approach has reached a high level of maturity and has been widely applied (see e.g.,~\cite{BDN20,DDE05,dziuk_elliott} for surveys and bibliographies).

Alternatively, Qin-Zhang-Zhang in \cite{qin_zhang_zhang} proposed an idea for solving elliptic equations on manifolds using the framework of DDMs. Since a $d$-dimensional manifold $M$ has local coordinate charts by definition, $M$ can be decomposed into finitely many subdomains that carry local coordinates. An elliptic equation in each subdomain can then be transformed to one in a domain in $\mathbb{R}^{d}$. Therefore, an elliptic problem on $M$ can be solved by DDMs with subproblems posed on Euclidean domains, provided that the DDMs are convergent.

The idea of \cite{qin_zhang_zhang} was further developed by Cao-Qin in \cite{cao_qin} and by Qin-Wang-Wang in \cite{qin_wang_wang}. Those works combine the philosophy of \cite{qin_zhang_zhang} with the seminal works of P.~L.~Lions. To solve elliptic equations in Euclidean domains, Lions proposed a sequential DDM in \cite[Section~I.~4]{lions1} and a parallel DDM in \cite[p.~66]{lions2}. Both methods are overlapping DDMs formulated at the continuous level. The continuous sequential DDM was extended to manifolds in \cite{cao_qin}; the parallel one was adapted, by introducing a partition of unity, and generalized to manifolds in \cite{qin_wang_wang}. At the numerical level, the aforementioned continuous DDMs were discretized using FEMs in \cite{cao_qin,qin_wang_wang}. More recently, Jiang-Qin-Wang further proposed numerical imitations of the continuous DDMs based on physics-informed neural network methods (PINNs) in \cite{jiang_qin_wang}. These numerical DDMs have been tested in \cite{cao_qin,qin_wang_wang,jiang_qin_wang} on various manifolds, both with and without boundary, in dimensions ranging from $4$ to $10$. The numerical results indicate that the DDM approaches perform effectively on high dimensional manifolds, including complicated ones that are not submanifolds of Euclidean spaces.

Compared with numerical implementations, the analysis of convergence is considerably more challenging. To date, a convergence theory for the aforementioned numerical DDMs has not been established. At the continuous level, the geometric convergence (see Definition \ref{def_rate} below) of the sequential DDM was proved by Lions in \cite[Theorem.~I.2]{lions1} for Euclidean domains, and his theorem and proof were extended straightforwardly to manifolds in \cite{cao_qin}. Thus, a satisfactory convergence theory for the continuous sequential DDM on manifolds is already available.

For the continuous parallel DDM, the situation is quite different. Lions also proved its convergence in \cite[p.~66-67]{lions2} for Euclidean domains. This proof, presented in a compressed style, relies on the technique of sub- and super-solutions and is much more elaborate than that in \cite{lions1}. Later, this proof was adapted and generalized to manifolds in \cite[Theorem~2.4]{qin_wang_wang}. However, neither \cite{lions2} nor \cite{qin_wang_wang} proved geometric convergence for Euclidean domains, let alone for manifolds. In fact, whether this DDM converges geometrically has remained open to this day.

In this paper, we prove that the continuous parallel DDM, i.e.,~Algorithm \ref{alg_continuous_parallel} below, does converge geometrically on manifolds. It's worth pointing out that, although this algorithm, proposed in \cite{qin_wang_wang}, is significantly inspired by \cite{lions2}, it differs from Lions' original method even in the special case where the manifold is a Euclidean domain: it employs a partition of unity which was not used in \cite{lions2} (see $\S$\ref{subsec_algorithm} for more details). By exploiting this partition of unity together with the classical maximum principle, we now establish the desired geometric convergence. Our proof differs substantially from those in \cite[p.~66-67]{lions2} and \cite{qin_wang_wang} (cf.~Remark \ref{rmk_qww_proof}).

Here is a brief description of the main results of this paper. We can indeed prove a very general Theorem \ref{thm_converge} on the geometric convergence in $C^{0}$-norm. However, its formulation and proof are technically rather involved. To fix ideas and enhance the readability of this paper, we shall first present detailed proofs of some special cases of Theorem \ref{thm_converge}. In these special cases, Theorem \ref{thm_converge} splits into three results: Corollary \ref{cor_rate_nondegenerate}, Theorems \ref{thm_rate_boundary} and \ref{thm_slow}. The proof of Theorem \ref{thm_rate_boundary} is particularly intricate and relies in part on some ideas from \cite{qin_xu06}. More precisely, we introduce a notion of winding number (see Definition \ref{def_winding} below) that is similar to, yet distinct from, that in \cite[Definition~4.1]{qin_xu06}. Following \cite{qin_xu06}, we apply an inductive argument on a sequence of subdomains, where the number of inductive steps is limited by the winding number. This type of argument originated in another work of Lions \cite{lions3} and was subsequently developed in \cite{deng2}, \cite{deng3}, and \cite{qin_xu06}.

Rather than providing a complete proof of Theorem \ref{thm_converge}, we present a sketch in $\S$\ref{subsec_converge_general}. The proof idea is parallel to that of the special cases mentioned above. Once the reader is familiar with the proofs of those special cases, the details of the general proof can be filled in along the lines sketched there.

In addition to establishing the geometric convergence in $C^{0}$-norm, we also derive several estimates on the convergence rate (see Definition \ref{def_rate} below). These results are stated in Theorems \ref{thm_bound}, \ref{thm_comparison}, and \ref{thm_refine}, which elucidate the manner in which the geometric structure of the domain decomposition and the coefficients of the differential equation influence the convergence. Finally, Theorems \ref{thm_high_global} and \ref{thm_high_local} demonstrate that $C^{0}$-convergence implies convergence of high-order derivatives, provided that the relevant regularity is assumed.

DDMs have a long history, originating with the alternating method invented by H.~A.~Schwarz \cite{schwarz}. The seminal works \cite{lions1,lions2,lions3} of P.~L.~Lions are natural and remarkable extensions of Schwarz's original idea. Since then, the field has grown greatly (see e.g.,~\cite{BPWX,CDS1999,dolean2015DDM,quarteroni_valli,smith_bjorstad_gropp,toselli_widlund,xu92,xu_zou}). In recent years, neural networks have emerged as an exceptionally powerful numerical tool for solving partial differential equations (see e.g.,~\cite{E_Yu2018} and \cite{RPK2019}.) Numerous works have combined DDM frameworks with neural network approaches (see e.g.,~\cite{DHMM2024,KKKK2024,sun_xu_yi_2024}); in particular, \cite{jiang_qin_wang} integrated the manifold-based DDM frameworks discussed above with PINNs.

As noted above, DDMs fall into two classes: continuous-level and numerical-level. Lions' work \cite{lions1,lions2,lions3} and the method under investigation in this paper belong to the former class. The latter class, by contrast, is far more represented in the literature and is undoubtedly of great importance for practical computation. The continuous formulations and their accompanying convergence theories, however, retain lasting significance. While DDMs have thus far been implemented numerically using FEMs and neural networks, future developments will definitely introduce novel numerical techniques that we cannot presently anticipate. With a continuous DDM and a satisfactory convergence theory in hand, it's promising to implement this DDM using any effective numerical approach.

The outline of this paper is as follows. In $\S$\ref{sec_main}, we first introduce the elliptic boundary value problem and the DDM under consideration, and then state our main results, which comprise a series of theorems. The proofs of these theorems are presented in the sections from $\S$\ref{sec_geometry} to $\S$\ref{sec_high}. In $\S$\ref{sec_general}, we discuss some generalizations of the main results; in particular, the most general Theorem \ref{thm_converge} on $C^{0}$-convergence is formulated and proved there. Finally, we summarize this paper in $\S$\ref{sec_conclusion}.

\section{Main Results}\label{sec_main}
In this section, we introduce a linear elliptic problem on a compact Riemannian manifold and recall a domain decomposition method (DDM) for solving this problem. This DDM was proposed at the continuous level in \cite[Algorithm~2.1]{qin_wang_wang}, and had been numerically validated in \cite{qin_wang_wang} and \cite{jiang_qin_wang} by finite element methods and physics-informed neural network methods, respectively. We now formulate a family of Theorems \ref{thm_bound}, \ref{thm_rate}, \ref{thm_rate_boundary}, \ref{thm_slow}, \ref{thm_comparison}, \ref{thm_refine}, \ref{thm_high_global}, and \ref{thm_high_local} concerning the convergence of this DDM. These theorems constitute the main results of this paper. Their proofs occupy the subsequent sections from $\S$\ref{sec_geometry} through $\S$\ref{sec_high}.

\subsection{Elliptic Problem}
Let $M$ be a $d$-dimensional compact smooth manifold without or with boundary $\partial M$. Equip $M$ with a $C^{\infty}$ Riemannian metric $g$. With respect to $g$, the Laplace operator $\Delta$, also named the Laplace-Beltrami operator, is defined on $M$. Note that neither $g$ nor $\Delta$ can be expressed by coordinates globally in general because $M$ does not necessarily have a global coordinate chart. In a local chart with coordinates $(x_{1}, \dots, x_{d})$, the Riemannian metric tensor $g$ is expressed as
\begin{equation}\label{eqn_metric}
g= \sum_{\alpha, \beta=1}^{d} g_{\alpha \beta} \mathrm{d} x_{\alpha} \otimes \mathrm{d} x_{\beta},
\end{equation}
where the matrix function $(g_{\alpha \beta})_{d \times d}$ is $C^{\infty}$ and its values are symmetric and positive definite. The Laplace operator $\Delta$ is expressed as
\begin{align}\label{eqn_laplace}
\Delta u & = \frac{1}{\sqrt{G}}\sum_{\alpha=1}^{d}\frac{\partial}{\partial x_{\alpha}} \left( \sum_{\beta=1}^{d}g^{\alpha \beta} \sqrt{G} \frac{\partial u}{\partial x_{\beta}} \right) \nonumber \\
& = \sum_{\alpha,\beta=1}^{d} g^{\alpha \beta}  \frac{\partial^{2} u}{\partial x_{\alpha} \partial x_{\beta}} + \frac{1}{\sqrt{G}} \sum_{\beta=1}^{d} \sum_{\alpha=1}^{d} \frac{\partial}{\partial x_{\alpha}} \left( g^{\alpha \beta} \sqrt{G} \right) \frac{\partial u}{\partial x_{\beta}},
\end{align}
where $G = \det \left( (g_{\alpha \beta})_{d \times d} \right)$ is the determinant of the matrix $(g_{\alpha \beta})_{d \times d}$ and $(g^{\alpha \beta})_{d \times d}$ is the inverse of $(g_{\alpha \beta})_{d \times d}$.

Define a differential operator $\mathfrak{L}$ on $M$ as
\begin{equation}\label{eqn_operator}
\mathfrak{L} u := - \Delta u + \langle \vec{b}, \nabla u \rangle + c u,
\end{equation}
where $\vec{b}$ is a $C^{\infty}$ vector field on $M$, the inner product $\langle \cdot, \cdot \rangle$ is given by $g$, and $c$ is a $C^{\infty}$ function on $M$. In the above local chart, we have
\begin{equation}\label{eqn_operator_local}
\mathfrak{L} u = - \sum_{\alpha,\beta=1}^{d} g^{\alpha \beta}  \frac{\partial^{2} u}{\partial x_{\alpha} \partial x_{\beta}} + \sum_{\alpha =1}^{d} b^{\alpha} \frac{\partial u}{\partial x_{\alpha}} + cu,
\end{equation}
where each $b^{\alpha}$ comes from the coefficients in the second summand in \eqref{eqn_laplace} and the local expression of $\vec{b}$. Therefore, each $b^{\alpha}$ is a $C^{\infty}$ function in this local chart. We further infer $\mathfrak{L}$ is a second-order linear elliptic differential operator with $C^{\infty}$ coefficients.

Since the Laplace operator $\Delta$ appearing in \eqref{eqn_operator} depends on $g$, we shall denote it by $\Delta_{g}$ when clarity is needed. Although $\Delta_{g}$ is a special operator, the operator \eqref{eqn_operator} is in fact universal for second-order linear elliptic operators on manifolds. Specifically, every such a general operator on a Riemannian manifold $(M,g)$ admits a reduction to the form \eqref{eqn_operator} by choosing a suitable Riemannian metric $\tilde{g}$ (which may differ from $g$), in which the Laplace operator is taken with respect to $\tilde{g}$. Consequently, the results of this paper apply to the full class of second-order linear elliptic equations on manifolds. For a detailed explanation, see $\S$\ref{subsec_type}.

Let $u \in C^{0} (M)$, define $f := \mathfrak{L} u$ as a distribution on $M$. Since the equation $\mathfrak{L} u = f$ can be studied on each component of $M$, we assume $M$ is connected throughout this paper. Define $\varphi := u|_{\partial M} \in C^{0} (\partial M)$ if $\partial M \ne \emptyset$. Then $u$ solves the following problem
\begin{equation}\label{eqn_problem}
\left\{
\begin{array}{rcl}
\mathfrak{L} u & = & f, \\
u|_{\partial M} & = & \varphi.
\end{array}
\right.
\end{equation}
Here, if $\partial M = \emptyset$, then $\varphi$ is undefined and the boundary value condition $u|_{\partial M} = \varphi$ is vacuously satisfied.

Though \eqref{eqn_problem} always has a solution $u$, it is not necessarily well-posed since its solution may not be unique. For example, if $\partial M = \emptyset$ and $c=0$ in \eqref{eqn_operator}, then $u+C$ is also a solution to \eqref{eqn_problem} for any constant $C$. Alternatively, suppose $\partial M \ne \emptyset$, $\varphi =0$, $\vec{b} =0$, and $c = \lambda <0$, where $\lambda$ is an eigenvalue of $\Delta$, then $u+ \psi$ is also a solution for any eigenfunction $\psi$ associated with $\lambda$. To ensure the well-posedness, we make the following assumption throughout this paper.
\begin{assumption}\label{asp_wellpose}
The manifold $M$ and the function $c$ in \eqref{eqn_operator} satisfy one of the following conditions:
\begin{enumerate}[(1)]
\item $c \ge 0$ and $c \ne 0$;

\item $c \ge 0$ and $\partial M \ne \emptyset$.
\end{enumerate}
\end{assumption}

\begin{lemma}\label{lem_wellpose}
Under Assumption \ref{asp_wellpose}, in the space $C^{0} (M)$, the problem \eqref{eqn_problem} has the unique distributional solution $u$.
\end{lemma}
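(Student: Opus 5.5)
Existence is already built into the setup: we started from $u\in C^{0}(M)$ and \emph{defined} $f=\mathfrak{L}u$ and $\varphi=u|_{\partial M}$. So the content of Lemma \ref{lem_wellpose} is uniqueness. By linearity, it suffices to show the following: if $w\in C^{0}(M)$ satisfies $\mathfrak{L}w=0$ in the sense of distributions on the interior of $M$, and $w|_{\partial M}=0$ whenever $\partial M\neq\emptyset$, then $w\equiv 0$.

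The first step is regularity. Near each interior point, \eqref{eqn_operator_local} expresses $\mathfrak{L}$ as a second-order elliptic operator with $C^{\infty}$ coefficients. Local elliptic regularity (hypoellipticity) then gives $w\in C^{\infty}(M\setminus\partial M)$. So $w$ is a classical solution in the interior and is continuous up to the boundary.

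The second step is the classical strong maximum principle, applied chart by chart. Since $c\ge 0$, the operator $\mathfrak{L}$ satisfies the hypotheses of Hopf's strong maximum principle in every coordinate chart; the sign convention $-\Delta$ with $c\ge0$ is the standard one. Suppose $w$ is not identically zero. Replacing $w$ by $-w$ if necessary, we may assume $m:=\max_M w>0$; the maximum exists because $M$ is compact and $w$ is continuous.

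Consider the set $S=\{x\in M\setminus\partial M : w(x)=m\}$. It is closed in the interior by continuity. It is also open: near any point of $S$, $w$ attains a nonnegative interior maximum, and since $c\ge0$ the strong maximum principle forces $w\equiv m$ on a neighborhood. Because $M$ is connected, its interior is connected as well, so the interior is either contained in $S$ or disjoint from it.

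We now split according to Assumption \ref{asp_wellpose}.

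Case (2), $\partial M\neq\emptyset$. Here $w=0<m$ on $\partial M$, so the maximum is attained at an interior point and $S\neq\emptyset$. Hence $S$ is the whole interior. By continuity $w\equiv m$ on the boundary too, which contradicts $w|_{\partial M}=0$.

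Case (1), $c\ge0$ and $c\not\equiv0$, allowing $\partial M=\emptyset$. If $\partial M\neq\emptyset$ we are back in case (2). Otherwise $M$ is closed, so the maximum is interior and $w\equiv m$ on all of $M$. Then $0=\mathfrak{L}w=cm$, so $c\equiv0$, a contradiction.

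In both cases $w\equiv0$, which proves uniqueness.

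The main obstacle is the first step: justifying that a merely $C^{0}$ distributional solution is $C^{2}$ in the interior, so that the classical maximum principle applies. I would handle this by citing hypoellipticity of elliptic operators with smooth coefficients, for instance via Sobolev regularity $H^{s}_{loc}\to H^{s+2}_{loc}$ iterated from $w\in L^{2}_{loc}$, and then Sobolev embedding. No regularity up to $\partial M$ is needed, because the argument only uses continuity of $w$ on $M$.
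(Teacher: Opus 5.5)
Your proposal is correct and follows essentially the same route as the paper. Both use hypoellipticity of $\mathfrak{L}$ (the paper cites H\"{o}rmander) to make the difference of two $C^{0}$ solutions a classical solution in $M\setminus\partial M$, then apply the maximum principle, reducing case (1) with $\partial M\neq\emptyset$ to case (2) and concluding from $c\,m=0$ when $\partial M=\emptyset$. The only difference is that you rederive the strong maximum principle inline through the open--closed argument on the connected interior, which is exactly how the paper proves Theorem~\ref{thm_maximum} before citing it here.
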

\begin{proof}
Suppose $v$ is another solution, then $u-v \in C^{0} (M)$ and $\mathfrak{L} (u-v) =0$ in the distributional sense. By \cite[Theorem~7.4.1]{Hormander}, we have $(u - v) \in C^{\infty} (M \setminus \partial M)$. Then $\mathfrak{L} (u-v) =0$ in the classical sense.

In the case of the (2) in Assumption \ref{asp_wellpose}, since $\partial M \ne \emptyset$, we have $(u-v)|_{\partial M} =0$, then $u=v$ by the weak maximum principle \ref{cor_maximum}.

Now let's assume the (1) in Assumption \ref{asp_wellpose}. It remains to consider the case of $\partial M = \emptyset$. Since $M \setminus \partial M = M$ is compact, $u-v$ reaches its maximum and minimum in $M \setminus \partial M$. By the strong maximum principle \ref{cor_maximum}, we see $u-v$ is constant. Since $c \ne 0$ by assumption, we infer $u=v$.
\end{proof}

\subsection{Domain Decomposition Method}\label{subsec_algorithm}
Let's recall the domain decomposition method (DDM) in \cite[Algorithm~2.1]{qin_wang_wang} for solving \eqref{eqn_problem}.

Suppose $M$ is decomposed into $m$ subdomains, i.e.,~$M = \bigcup_{i=1}^{m} M_{i}$, such that $M_{i} \ne M$ for all $i$. Here we call $M_{i}$ a subdomain of $M$ if it is a connected, compact, topologically embedded submanifold with boundary and of codimension $0$. In particular, if $\Omega$ is an open subset of $\mathbb{R}^{d}$, then any $d$-dimensional finite polyhedron insider $\Omega$ is a subdomain of $\Omega$ in our sense. For a detailed discussion of subdomains, we refer to $\S$\ref{subsec_subdomain}.

Let $\partial M_{i}$ denote the manifold boundary of $M_{i}$. (There are two inequivalent definitions of a boundary, one is in the setting of manifold theory, another is in point-set topology. For a detailed explanation on interiors and boundaries, see $\S$\ref{subsec_boundary}.) Let $\gamma_{i} = \partial M_{i} \setminus \partial M$. We see $\overline{\gamma_{i}} \subseteq \partial M_{i}$, and $M_{i} \setminus \overline{\gamma_{i}}$ is the interior of $M_{i}$ in the sense of point-set topology (see Lemma \ref{lem_subdomain_interior} below), where $\overline{\gamma_{i}}$ is the closure of $\gamma_{i}$ in $M$. If $M_{i} \cap \partial M = \emptyset$, particularly $\partial M = \emptyset$, then $\overline{\gamma_{i}} = \gamma_{i} = \partial M_{i}$.

If for each $x \in \partial M_{i}$, there is a coordinate chart $U$ containing $x$ such that, under this specific coordinate system, $U \cap \partial M_{i}$ is Lipschitz in the usual sense in $\mathbb{R}^{d}$ (see \cite[4.9]{Adams_Fournier}), then we say $\partial M_{i}$ is Lipschitz. As proved in $\S$\ref{subsec_lipschitz}, this definition of Lipschitz boundary does not depend on the particular choice of charts. Consequently, if $\partial M_{i}$ is Lipschitz near $x$ in some chart, then so is it in any chart containing $x$.

We make the assumption below on decompositions throughout this paper. By a partition of unity, we mean each $\rho_{i}$ is a nonnegative function and $\sum_{i=1}^{m} \rho_{i} =1$.
\begin{assumption}\label{asp_decomposition}
Suppose $M = \bigcup_{i=1}^{m} (M_{i} \setminus \overline{\gamma_{i}})$, and each $\partial M_{i}$ is Lipschitz. There is a partition of unity $\rho := \{ \rho_{i} \mid 1 \leq i \leq m \}$ subordinate to this decomposition, i.e.,~$\emptyset \ne \mathrm{supp} \rho_{i} \subset M_{i} \setminus \overline{\gamma_{i}}$, where $\mathrm{supp} \rho_{i}$ is the support of $\rho_{i}$. Furthermore, each $\rho_{i}$ is continuous.
\end{assumption}

By Assumption \ref{asp_decomposition} and Lemma \ref{lem_subdomain_interior}, the $\{ M_{i} \setminus \overline{\gamma_{i}} \mid 1 \le i \le m \}$ is an open cover of $M$. Thus our decomposition is overlapping, and one can always find a $C^{\infty}$ partition of unity subordinate to this open cover (see \cite[1.11]{Warner}). Here by a $C^{\infty}$ partition, we mean each $\rho_{i}$ is $C^{\infty}$. The following Algorithm \ref{alg_continuous_parallel} is the DDM under investigation.
\begin{algorithm}
\caption{(\cite{qin_wang_wang})~DDM to solve \eqref{eqn_problem}, under Assumptions \ref{asp_wellpose} and \ref{asp_decomposition}.}
\label{alg_continuous_parallel}

\begin{algorithmic}[1]
\State%
Choose an initial guess function $u^{0}$ on $M$ with $u^{0}|_{\partial M} = \varphi$.

\State%
For each $n>0$,
\begin{quote}
for $1 \leq i \leq m$,
\begin{quote}
find a function $u^{n}_{i}$ on $M_{i}$ such that
\begin{equation}\label{alg_continuous_parallel_1}
\left\{
\begin{aligned}
\mathfrak{L} u^{n}_{i}  & = f, & \text{in $M_{i} \setminus \partial M_{i}$;}
\\
u^{n}_{i}  & = u^{n-1}, & \text{on $\partial M_{i}$.}
\end{aligned}
\right.
\end{equation}
\end{quote}
\end{quote}

Let $u^{n} = \sum_{i=1}^{m} \rho_{i} u^{n}_{i}$.
\end{algorithmic}
\end{algorithm}

We first point out that Algorithm \ref{alg_continuous_parallel} is well-posed. Though $u^{n}_{i}$ in \eqref{alg_continuous_parallel_1} is merely defined on $M_{i}$, the subsequent $u^{n}$ is well-defined on the whole $M$ since $\mathrm{supp} \rho_{i} \subset M_{i}$. More precisely, the function $\rho_{i} u^{n}_{i}$ on $M_{i}$ is extended as $0$ outside of $M_{i}$.
\begin{lemma}\label{lem_algorithm_wellpose}
Suppose $u^{0} \in C^{0} (M)$ in Algorithm \ref{alg_continuous_parallel}. Then, $\forall n>0$, in the space $C^{0} (M_{i})$, the \eqref{alg_continuous_parallel_1} has a unique distributional solution $u^{n}_{i}$ for each $i$, and $u^{n} \in C^{0} (M)$. Furthermore, $u^{n}_{i}|_{\partial M} = \varphi$ and $u^{n}|_{\partial M} = \varphi$.
\end{lemma}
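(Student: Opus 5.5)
We argue by induction on $n$, the case $n=0$ being the hypothesis $u^{0}\in C^{0}(M)$, $u^{0}|_{\partial M}=\varphi$. Suppose $u^{n-1}\in C^{0}(M)$ with $u^{n-1}|_{\partial M}=\varphi$. For each $i$, the problem \eqref{alg_continuous_parallel_1} is a Dirichlet problem on the compact subdomain $M_{i}$ with continuous boundary data $u^{n-1}|_{\partial M_{i}}$ and with right-hand side $f=\mathfrak{L}u$ only a distribution. To handle this, subtract the known solution: set $w:=u^{n}_{i}-u|_{M_{i}}$. Then \eqref{alg_continuous_parallel_1} becomes
\begin{equation*}
\mathfrak{L} w = 0 \ \text{in } M_{i}\setminus\partial M_{i}, \qquad w = (u^{n-1}-u)|_{\partial M_{i}} \ \text{on } \partial M_{i},
\end{equation*}
which has continuous boundary data and no distributional source. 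Thus existence and uniqueness of $u^{n}_{i}\in C^{0}(M_{i})$ reduce to existence and uniqueness of a $C^{0}(M_{i})$ solution of this homogeneous problem.

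Uniqueness follows exactly as in Lemma \ref{lem_wellpose}. If $w_{1},w_{2}$ are two solutions, their difference is continuous and satisfies $\mathfrak{L}(w_{1}-w_{2})=0$ distributionally. By hypoellipticity \cite[Theorem~7.4.1]{Hormander}, it is smooth in the interior. Since $M_{i}\neq M$ and $M$ is connected, $\partial M_{i}\ne\emptyset$, and $c\ge0$ by Assumption \ref{asp_wellpose}. The weak maximum principle (Corollary \ref{cor_maximum}) applied on the compact $M_{i}$ with zero boundary values then gives $w_{1}=w_{2}$.

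Existence is the main obstacle, because $M_{i}$ is only a topological submanifold with Lipschitz boundary. I would use the Perron method, or equivalently approximate the boundary data uniformly by smooth data, solve, and pass to the limit with the maximum principle. The ingredient needed is that every point of $\partial M_{i}$ is regular for the Dirichlet problem of $\mathfrak{L}$. Lipschitz boundaries satisfy a uniform exterior cone condition in local charts, and cone conditions are invariant under diffeomorphic changes of chart up to shrinking the cone. By the classical barrier construction for operators with smooth coefficients under an exterior cone condition (e.g.\ Miller's barriers, or the Wiener criterion), local barriers exist at each boundary point. Perron's method with $c\ge0$ then produces $w\in C^{0}(M_{i})\cap C^{\infty}(M_{i}\setminus\partial M_{i})$ with the prescribed boundary values. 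Alternatively, one can solve variationally in $H^{1}$ for smooth approximations of the data, which is possible since the Dirichlet problem is injective, and obtain continuity up to the boundary from the Lipschitz regularity. Putting $u^{n}_{i}:=u|_{M_{i}}+w$ gives a continuous solution of \eqref{alg_continuous_parallel_1} in the distributional sense.

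The remaining claims are direct. Since $\rho_{i}$ is continuous with $\mathrm{supp}\rho_{i}\subset M_{i}\setminus\overline{\gamma_{i}}$, which is open in $M$ by Lemma \ref{lem_subdomain_interior}, the zero extension of $\rho_{i}u^{n}_{i}$ is continuous on $M$. Hence $u^{n}=\sum_{i}\rho_{i}u^{n}_{i}\in C^{0}(M)$. Next, $M_{i}\cap\partial M\subseteq\partial M_{i}$, because interior points of $M_{i}$ in $M$ cannot lie on $\partial M$ as manifold boundary points. So on $M_{i}\cap\partial M$ we have $u^{n}_{i}=u^{n-1}=\varphi$ by the induction hypothesis. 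Finally, for $x\in\partial M$, $u^{n}(x)=\sum_{i:\,x\in M_{i}}\rho_{i}(x)\varphi(x)=\varphi(x)$, since $\rho_{i}(x)=0$ whenever $x\notin M_{i}$ and $\sum_{i}\rho_{i}=1$. This closes the induction.
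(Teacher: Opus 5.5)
Your proposal is correct and follows essentially the same route as the paper. You subtract $u$ to get a homogeneous Dirichlet problem with continuous data $(u^{n-1}-u)|_{\partial M_i}$, obtain existence from Perron's method with exterior-cone barriers (the paper's Lemma \ref{lem_perron_lipschitz}), uniqueness from the weak maximum principle as in case (2) of Lemma \ref{lem_wellpose}, and continuity of $u^n$ from the open cover $\{M_i\setminus\overline{\gamma_i},\, M\setminus\mathrm{supp}\rho_i\}$, then induct on $n$. You also spell out details the paper leaves implicit, such as $\partial M_i\neq\emptyset$ and $M_i\cap\partial M\subseteq\partial M_i$; the latter holds because the manifold interior $M_i\setminus\partial M_i$ avoids $\partial M$, not the point-set interior $M_i\setminus\overline{\gamma_i}$, which by Lemma \ref{lem_subdomain_interior} can contain points of $\partial M$.
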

\begin{proof}
Trivially, $u^{n}_{i}|_{\partial M} = \varphi$ and $u^{n}|_{\partial M} = \varphi$ if \eqref{alg_continuous_parallel_1} is solvable. The proof of the uniqueness of $u^{n}_{i}$ duplicates that of Lemma \ref{lem_wellpose} under the (2) in Assumption \ref{asp_wellpose}.

It remains to prove the existence of $u^{n}_{i}$ and the continuity of $u^{n}$. Let's consider the problem
\[
\left\{
\begin{aligned}
\mathfrak{L} v  & = 0, & \text{in} \ M_{i} \setminus \partial M_{i};
\\
v & = u^{0} -u, & \text{on} \ \partial M_{i}.
\end{aligned}
\right.
\]
By the assumption, both $u^{0}$ and $u$ are in $C^{0} (M)$, and $\partial M_{i}$ is Lipschitz. By Lemma \ref{lem_perron_lipschitz}, the above problem has a Perron solution $v \in C^{0} (M_{i})$. Now $u^{1}_{i} := v+u$ is a solution to \eqref{alg_continuous_parallel_1} in $C^{0} (M_{i})$ for $n=1$. Since $u^{1}_{i}$ and $\rho_{i}$ are continuous for all $i$, we infer $u^{1} \in C^{0} (M)$ (see also the proof of Lemma \ref{lem_error_operator}).

We finish the proof by induction on $n$.
\end{proof}

Originally, Lions proposed a DDM in \cite[p.~66]{lions2} for elliptic equations on Euclidean domains. This DDM was adapted and generalized to Algorithm \ref{alg_continuous_parallel} by \cite{qin_wang_wang}. Though Algorithm \ref{alg_continuous_parallel} follows \cite{lions2} largely, it differs from \cite{lions2} even in the special case where the target manifold $M$ is a Euclidean domain. Actually, the above partition of unity was not used in \cite{lions2}. Instead, the boundary value $u^{n}_{i}|_{\gamma_{i}}$ in \cite{lions2} was chosen as an arbitrary function $v$ such that (see \cite[(42)~\&~(43)]{lions2})
\begin{equation}\label{eqn_lions}
\min \{ u^{n-1}_{j} (x) \mid x \in M_{j}, j \neq i \} \leq v(x) \leq \max \{ u^{n-1}_{j} (x) \mid x \in M_{j}, j \neq i \}.
\end{equation}
Lions then proved that his $u^{n}_{i}$ converges uniformly to $u$ on each compact set $K_{i} \subset M_{i} \setminus \overline{\gamma_{i}}$, but the geometric convergence was not established in any sense.

Our $u^{n}_{i}|_{\gamma_{i}}$ in Algorithm \ref{alg_continuous_parallel} certainly satisfies \eqref{eqn_lions}. Beyond this, thanks to the partition of unity in Assumption \ref{asp_decomposition}, the iteration in Algorithm \ref{alg_continuous_parallel} even has a better theory of convergence. First, our $u^{n}_{i}$ is continuous by Lemma \ref{lem_algorithm_wellpose}, whereas the $u^{n}_{i}$ in \cite{lions2} was usually not necessarily continuous on $M_{i}$ since $u^{n}_{i}|_{\gamma_{i}}$ was not necessarily continuous. Second, the partition of unity allows us to prove the geometric convergence of Algorithm \ref{alg_continuous_parallel}.

\begin{remark}\label{rmk_qww_proof}
The proof in \cite{qin_wang_wang} also exploited the partition of unity to show $u^{n}_{i}$ converges uniformly to $u$ on $M_{i}$. This result is slightly stronger than Lions', see also \cite[Remark~2.5]{qin_wang_wang}. However, since that proof largely follows \cite[p.~66-67]{lions2}, a desired geometric convergence was not obtained.
\end{remark}

\subsection{Convergence Rate: Upper Bound}
Let $\mathcal{D} := \{ M_{1}, \dots, M_{m} \}$ and $\rho := \{ \rho_{1}, \dots, \rho_{m} \}$ denote the domain decomposition and the partition of unity, respectively, in Assumption \ref{asp_decomposition}. We now define a constant $\Theta$. Theorem \ref{thm_rate} below indicates that $\Theta$ is an upper bound on the convergence rate of Algorithm \ref{alg_continuous_parallel}.

Let $\theta_{i}$ be the Perron solution (see $\S$\ref{subsec_Perron}) to the problem
\begin{equation}\label{eqn_bound_function}
\left\{
\begin{aligned}
\mathfrak{L} \theta_{i} & = 0, & \text{in $M_{i} \setminus \partial M_{i}$}, \\
\theta_{i} & = 1, & \text{on $\overline{\gamma_{i}}$}, \\
\theta_{i} & = 0, & \text{on $\partial M_{i} \setminus \overline{\gamma_{i}}$}.
\end{aligned}
\right.
\end{equation}
Let $\Theta_{i} := \sup_{\mathrm{supp} \rho_{i}} \theta_{i}$. Let $\Theta := \max \{ \Theta_{1}, \dots, \Theta_{m} \}$.

\begin{theorem}\label{thm_bound}
We have $0 < \Theta \le 1$. If in \eqref{eqn_operator}, $c>0$ somewhere in $M_{i}$ for all $i$ satisfying $\partial M_{i} = \overline{\gamma_{i}}$, then we further have $\Theta <1$.
\end{theorem}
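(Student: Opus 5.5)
We establish three things for each fixed $i$: first $0 \le \theta_{i} \le 1$ on $M_{i}$, then $\Theta_{i} > 0$, and finally $\Theta_{i} < 1$ under the extra hypothesis. Since $\Theta$ is the maximum of finitely many $\Theta_{i}$, this gives the statement.

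\textbf{Bounds $0 \le \theta_{i} \le 1$.} We compare $\theta_{i}$ with the constants $0$ and $1$ via the Perron construction and the weak maximum principle (Corollary \ref{cor_maximum}). Because $c \ge 0$, the constant $1$ satisfies $\mathfrak{L} 1 = c \ge 0$, so it is a supersolution. The boundary data $\chi$ (equal to $1$ on $\overline{\gamma_{i}}$ and $0$ on $\partial M_{i} \setminus \overline{\gamma_{i}}$) satisfies $0 \le \chi \le 1$. Hence the constant $0$ lies in the Perron lower class and the constant $1$ in the upper class, giving $0 \le \theta_{i} \le 1$. Note that $\chi$ may be discontinuous where $\overline{\gamma_{i}}$ meets $\partial M$; this is harmless, because the Perron solution is defined for bounded data and is $C^{\infty}$ with $\mathfrak{L}\theta_{i}=0$ in the interior $M_{i}\setminus\partial M_{i}$. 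Continuity up to the boundary is not needed here. Since $\mathrm{supp}\,\rho_{i}$ is a compact subset of $M_{i}\setminus\overline{\gamma_{i}}$, we get $\Theta_{i} \le 1$. Then $\Theta \le 1$.

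\textbf{Positivity $\Theta_{i} > 0$.} $\gamma_{i}$ is nonempty, since otherwise $M_{i}$ would be a component of $M$ and so $M_{i}=M$ by connectedness, which is excluded. Because $\partial M_{i}$ is Lipschitz, a point of $\gamma_{i}$ is regular (Lemma \ref{lem_perron_lipschitz}, exterior cone condition). So $\theta_{i}$ is continuous there with value $1$, and hence $\theta_{i}\not\equiv 0$ in the interior. Since $\theta_{i} \ge 0$, the strong maximum principle (Corollary \ref{cor_maximum}) for $c\ge0$ applied to $-\theta_{i}$ on the connected open set $M_{i}\setminus\partial M_{i}$ gives the following: if $\theta_{i}$ had an interior zero, i.e.\ a nonpositive minimum, it would be constant, a contradiction. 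Thus $\theta_{i}>0$ in the interior. The support of $\rho_{i}$ is nonempty, so $\Theta_{i}>0$.

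\textbf{Strict bound $\Theta_{i}<1$ under the extra hypothesis.} By compactness of $\mathrm{supp}\,\rho_{i}$ and continuity of $\theta_{i}$ there, it suffices to show $\theta_{i}<1$ at every interior point. Suppose $\theta_{i}(x_{0})=1$ at an interior point $x_{0}$. This is a nonnegative interior maximum, so by the strong maximum principle $\theta_{i}\equiv 1$ in the interior. We split into two cases.
\begin{enumerate}[(a)]
\item If $\partial M_{i}\ne\overline{\gamma_{i}}$, then $\partial M_{i}\setminus\overline{\gamma_{i}}$ is a relatively open nonempty subset of $\partial M_{i}\cap\partial M$. Its points are regular boundary points (Lipschitz), where $\theta_{i}$ attains boundary value $0$. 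This contradicts $\theta_{i}\equiv1$.
\item If $\partial M_{i}=\overline{\gamma_{i}}$, the hypothesis gives a point where $c>0$ in $M_{i}$, and by continuity an interior open set where $c>0$. On that set, $\mathfrak{L}\theta_{i}=c\cdot 1>0$, contradicting $\mathfrak{L}\theta_{i}=0$.
\end{enumerate}
In both cases $\theta_{i}<1$ in the interior, hence $\Theta_{i}<1$.

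\textbf{Main obstacle.} The delicate point is the boundary behaviour of the Perron solution with discontinuous data. Specifically, we must confirm that $\theta_{i}$ does attain the values $1$ on $\gamma_{i}$ and $0$ on $\partial M_{i}\setminus\overline{\gamma_{i}}$ at regular points, which requires a local version of Lemma \ref{lem_perron_lipschitz} (barrier locality for bounded data continuous near the point). We also need the nonemptiness and openness statements in case (a), which come from the topology of subdomains in $\S$\ref{subsec_boundary}.
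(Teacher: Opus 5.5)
Your proposal is correct and follows essentially the same route as the paper, which packages your three steps as parts (2)--(5) of Lemma \ref{lem_bound_perron} and then concludes from the compactness of $\mathrm{supp}\,\rho_{i}$ and the continuity of $\theta_{i}$ on $M_{i}\setminus\overline{\gamma_{i}}$. The ``local version'' you flag as the main obstacle is already Lemma \ref{lem_perron_lipschitz}(1), which only needs the data to be continuous at the point in question, and it applies on $\gamma_{i}$ and on $\partial M_{i}\setminus\overline{\gamma_{i}}$ because both are relatively open in $\partial M_{i}$; for $\Theta_{i}>0$ you could add that the nonempty open set $\{\rho_{i}>0\}$ meets $M\setminus\partial M$, and hence meets $M_{i}\setminus\partial M_{i}$.
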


\begin{remark}
The formulation and proof of Theorem \ref{thm_bound} are motivated by the Proposition 2 and Lemma 3 in \cite{lions2}. That Proposition 2 is actually related to our Theorem \ref{thm_bound}. The two-subdomain decomposition in that proposition is a special case of ours. Let $k_{1}$ and $k_{2}$ be the ones in that proposition. Then we have $k_{i} \le \Theta_{i}$ for $i=1,2$, where $\Theta_{i}$ is the one in our Theorem \ref{thm_bound}.
\end{remark}

\subsection{Winding Number}
To prove the geometric convergence, a difficult situation is that $\partial M \ne \emptyset$ and $c=0$. We need to study the geometric feature of the decomposition. Define $U_{i} := \{ x \in M \mid \rho_{i} (x) >0 \}$. By Assumption \ref{asp_decomposition}, we have $\bigcup_{i=1}^{m} U_{i} = M$. Define $I_{0} := \emptyset$,
\[
I_{1} := \{ i \mid 1 \le i \le m, \partial M_{i} \ne \overline{\gamma_{i}} \}.
\]
Suppose $\partial M \ne \emptyset$ temporarily, we know $I_{1} \ne \emptyset$ by Lemma \ref{lem_winding_start} below. When $n>1$, we define inductively
\[
I_{n} := I_{n-1} \cup \left\{ i \middle| 1 \le i \le m, \partial M_{i} \cap \left( \bigcup_{j \in I_{n-1}} U_{j} \right) \ne \emptyset \right\}.
\]
(When clarity requires, we denote these sets by $U_{i} (\rho)$ and $I_{n} (\mathcal{D}, \rho)$, respectively.) By the definition, $I_{n} \subseteq I_{n+1}$ for each $n$. If $I_{n} = I_{n+1}$, then $I_{n} = I_{k}$ for all $k>n$. Since $i$ is limited from $1$ to $m$, we know there exists an $N \ge 1$ such that $I_{N-1} \ne I_{N} = I_{N+1}$.

We can prove the following result.
\begin{proposition}\label{prop_winding}
If $\partial M \ne \emptyset$, then $I_{N} = \{ i \mid 1 \le i \le m \}$.
\end{proposition}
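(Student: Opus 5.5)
The plan is a connectedness argument. Set $I := I_{N}$ and $J := \{1,\dots,m\} \setminus I$. We assume $J \ne \emptyset$ and derive a contradiction. Put
\[
A := \bigcup_{j \in I} U_{j}, \qquad B := \bigcup_{j \in J} M_{j}.
\]
Since $\partial M \ne \emptyset$, Lemma \ref{lem_winding_start} gives $I_{1} \ne \emptyset$, so $I \ne \emptyset$ and $A \ne \emptyset$. Each $U_{j}$ is open because $\rho_{j}$ is continuous, so $A$ is open. The set $B$ is a finite union of compact sets, hence closed in $M$.

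First we show $M = A \cup B$. Since $\bigcup_{i} U_{i} = M$ and $U_{j} \subseteq M_{j}$ for every $j$, any point not in $A$ lies in some $U_{j}$ with $j \in J$, and therefore lies in $B$.

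Next we use that $I_{N} = I_{N+1}$. For every $j \in J$ this gives $\partial M_{j} \cap A = \emptyset$, and since $j \notin I_{1}$ it also gives $\partial M_{j} = \overline{\gamma_{j}}$. The aim is to show that $A$ and $B$ separate $M$. For each $j \in J$, the boundary $\partial M_{j}$ misses $A$. Hence
\[
M_{j} \cap A = (M_{j} \setminus \partial M_{j}) \cap A = (M_{j}\setminus \overline{\gamma_{j}}) \cap A .
\]
We then decompose $M$ as
\[
M = \bigl(A \setminus B\bigr) \cup B .
\]
The piece $B$ is closed. The piece $A \setminus B = M \setminus B$ is open. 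If both are nonempty, this contradicts the connectedness of $M$. So either $B = \emptyset$, which contradicts $J \ne \emptyset$ because each $M_{j}$ is nonempty, or $A \subseteq B$, which gives $B = M$.

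The case $B = M$ needs more care. The better route is to use the open set
\[
W := \bigcup_{j \in J} (M_{j} \setminus \overline{\gamma_{j}}),
\]
which is open by Lemma \ref{lem_subdomain_interior}. Its closure is contained in $B$. Since $\partial M_{j} = \overline{\gamma_{j}}$ for $j \in J$, the point-set boundary of $W$ lies in $\bigcup_{j\in J}\partial M_{j}$, and this set is disjoint from $A$. So $A \cap W$ is relatively clopen in $A$. The real separation is then between $W$ and $M \setminus \overline{W}$, arranged as follows.

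The cleanest formulation I would try is this. Since $\partial M_{j} = \overline{\gamma_{j}}$ for $j\in J$, each such $M_{j}$ is disjoint from $\partial M$; this follows from the discussion after Assumption \ref{asp_decomposition}, because otherwise $\partial M_j \ne \overline{\gamma_j}$. Set
\[
C := M \setminus \bigcup_{j\in J} M_{j} .
\]
We claim $M = W \cup \bigl(A \cup C\bigr)$, with $W$ open and $A \cup C$ open. To see that $W$ and $A\cup C$ are disjoint: $W \cap C = \emptyset$ trivially. For $A \cap W$, which may be nonempty, the argument must be refined. So take instead
\[
V := A \cup C .
\]
A point of $\partial M_{j}$ with $j\in J$ is not in $A$. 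It is also not in $C$, and it is not in $W$ unless it is interior to another $M_{k}$ with $k\in J$.

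This forces the key step, which I expect to be the main obstacle: show that
\[
K := \bigcup_{j\in J} M_{j}
\]
is open. Let $x \in \partial M_{j}$ with $j \in J$. Then $x$ lies in some $U_{i}$. We have $i \notin I$ because $x \notin A$, so $i\in J$ and $x \in U_{i} \subset M_{i} \setminus \overline{\gamma_{i}}$. Thus every point of $K$ is interior to $K$, and $K$ is open. It is also compact, hence closed, and it is nonempty. By connectedness $K = M$.

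Finally, $\partial M \ne \emptyset$ while each $M_{j}$ with $j\in J$ misses $\partial M$, so $K \ne M$. This is a contradiction, and therefore $I_{N}$ contains all indices.
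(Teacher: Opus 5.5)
\textbf{Gap in the final step.} Your clopen argument for $K=\bigcup_{j\in J}M_j$ is correct: every $x\in\partial M_j$ with $j\in J$ lies in some $U_i$ with $i\in J$, so $K$ is open, hence $K=M$. The closing contradiction, however, rests on the claim that $\partial M_j=\overline{\gamma_j}$ forces $M_j\cap\partial M=\emptyset$, and that claim is false. By Lemma~\ref{lem_subdomain_interior}, $\overline{\gamma_j}\setminus\gamma_j=\overline{\gamma_j}\cap\partial M$, and this set can be nonempty. The subdomain $M_2$ in the right panel of Fig.~\ref{fig_boundary} satisfies $\partial M_j=\overline{\gamma_j}$ while touching $\partial M$, and so does $M_3$ in the right panel of Fig.~\ref{fig_winding}. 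The extra hypothesis in the second half of Theorem~\ref{thm_slow} is there precisely because this is not automatic. The remark in the paper you seem to be recalling gives only the converse: if $M_i\cap\partial M=\emptyset$, then $\overline{\gamma_i}=\partial M_i$.

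\textbf{Repair.} The fact $M_j\cap\partial M=\emptyset$ does hold for $j\in J$, but it must come from $\partial M_j\cap A=\emptyset$, not from $\partial M_j=\overline{\gamma_j}$. Each $x\in\partial M$ lies in some $U_i\subseteq M_i\setminus\overline{\gamma_i}$. Since $(M_i\setminus\partial M_i)\cap\partial M=\emptyset$ (invariance of domain, as in the proof of Lemma~\ref{lem_winding_start}), we get $x\in\partial M_i\setminus\overline{\gamma_i}$, so $i\in I_1\subseteq I_N$. Hence $\partial M\subseteq A$; this is Lemma~\ref{lem_boundary_neighborhood}(1), whose proof does not use the proposition. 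For $j\in J$ this gives
$M_j\cap\partial M=\partial M_j\cap\partial M\subseteq\partial M_j\cap A=\emptyset$,
so $K=M$ contradicts $\partial M\ne\emptyset$.

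Alternatively, once $K=M$ the subfamily $\{M_j\}_{j\in J}$ covers $\partial M$. The argument of Lemma~\ref{lem_winding_start} then yields some $j\in J$ with $\partial M_j\ne\overline{\gamma_j}$, i.e.\ $j\in I_1\subseteq I_N$, again a contradiction.

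Two clean-up points for the write-up:
\begin{itemize}
\item Delete the abandoned intermediate attempts. In particular, $M=(A\setminus B)\cup B$ is not a separation, since a closed set and its open complement never disconnect $M$ unless the closed set is also open.
\item In the clopen step, say explicitly that points of $M_j\setminus\partial M_j$ are interior to $K$ because that set is open in $M$.
\end{itemize}

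\textbf{Comparison with the paper.} With this repair your proof is correct and takes a genuinely different route. The paper fixes $i_0$ and grows $D_n$ outward from $M_{i_0}$ until $D_{n_0}=M$. It then uses Lemma~\ref{lem_boundary_interior} to locate a subdomain in $I_1$ and traces a chain $j_1\in I_1,\,j_2\in I_2,\dots,j_s=i_0$ back to $i_0$. That argument is constructive: it exhibits an explicit chain of subdomains reaching each $M_{i_0}$ from $\partial M$.

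Your argument instead works by contradiction on the unreached set $J$ and turns the stability $I_N=I_{N+1}$ directly into the clopenness of $\bigcup_{j\in J}M_j$. It is shorter, and after the fix it needs only $\partial M\subseteq\bigcup_{i\in I_1}U_i$ (which by itself already gives $I_1\ne\emptyset$), not the $(d-1)$-dimensional fact in Lemma~\ref{lem_boundary_interior}. It also transfers directly to Proposition~\ref{prop_general_winding}. Replace $I_1$ by $I_1\cup P(c)$; in the case $\partial M=\emptyset$, the equality $K=M$ then forces $c\equiv 0$ on $M$, contradicting Assumption~\ref{asp_wellpose}.
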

\begin{definition}\label{def_winding}
We call the above $N$ the \emph{winding number} of $(\mathcal{D}, \rho)$. We also let $N(\mathcal{D}, \rho)$ denote it.
\end{definition}

The winding number in Definition \ref{def_winding} is similar to but different from that winding number in \cite[Definition~4.1]{qin_xu06}. There is one obvious difference. The domain decomposition in \cite{qin_xu06} is nonoverlapping, while it is overlapping here. Nonetheless, both winding numbers reflect, in some sense, the complexity of the decomposition: the higher the winding number, the more complicated the decomposition. This complexity in turn affects the convergence rate. Further discussion of winding numbers, together with concrete examples, can be found in $\S$\ref{subsec_winding}.

\subsection{$C^{0}$-Convergence}
Let's recall the definition of geometric convergence.
\begin{definition}\label{def_rate}
Suppose there exist constants $C \ge 0$ and $L \in [0,1)$ such that, $\forall u^{0} \in C^{0} (M)$ satisfying $u^{0}|_{\partial M} = \varphi$, $\forall n>0$, we have
\begin{equation}\label{def_rate_1}
\| u^{n} - u\|_{C^{0} (M)} \le C L^{n} \| u^{0} - u\|_{C^{0} (M)},
\end{equation}
where $u^{0}$ and $u^{n}$ are the ones in Algorithm \ref{alg_continuous_parallel}, and $C$ and $L$ are independent of $u^{0}$ and $n$. Then we say Algorithm \ref{alg_continuous_parallel} \emph{converges geometrically} in norm $\| \cdot \|_{C^{0} (M)}$ and call $L$ an \emph{upper bound on the convergence rate}.
\end{definition}

Many work in the literature call $L$ the \emph{convergence rate} directly (see e.g.,~\cite[4)~in~p.~52]{lions2}). This terminology is widely accepted. However, there are many pairs $(C,L)$ satisfying \eqref{def_rate_1}. For example, if $(C,L)$ satisfies \eqref{def_rate_1}, then so does $(C,L')$ for $L< L' <1$. In practice, one should try to find a desired $L$ as small as possible, which is nothing but a careful estimate of the convergence rate.

The following Theorem \ref{thm_rate} is a general estimate of convergence rate. Again, we make Assumptions \ref{asp_wellpose} and \ref{asp_decomposition} throughout this paper.
\begin{theorem}\label{thm_rate}
Let $u^{0} \in C^{0} (M)$ satisfying $u^{0}|_{\partial M} = \varphi$ be an initial guess of Algorithm \ref{alg_continuous_parallel}. Let $u^{n}$ and $u^{n}_{i}$ be the approximations generated by Algorithm \ref{alg_continuous_parallel}.

Then, $\forall n>0$, the following hold:
\begin{enumerate}[(1)]
\item $\| u^{n} - u \|_{C^{0} (M)} \le \max_{1 \le i \le m} \{ \| u^{n}_{i} - u \|_{C^{0} (M_{i})} \} \le \| u^{n-1} - u \|_{C^{0} (M)}$;

\item $\| u^{n} - u \|_{C^{0} (M)} \le \Theta \| u^{n-1} - u \|_{C^{0} (M)}$ and $\| u^{n} - u \|_{C^{0} (M)} \le \Theta^{n} \| u^{0} - u \|_{C^{0} (M)}$, where $\Theta$ is the one in Theorem \ref{thm_bound}.
\end{enumerate}
\end{theorem}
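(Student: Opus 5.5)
We will work with the errors $e^{n} := u^{n} - u$ and $e^{n}_{i} := u^{n}_{i} - u$. Since $u$ solves \eqref{eqn_problem} and $u^{n}_{i}$ solves \eqref{alg_continuous_parallel_1}, subtracting gives $\mathfrak{L} e^{n}_{i} = 0$ in $M_{i} \setminus \partial M_{i}$ and $e^{n}_{i} = e^{n-1}$ on $\partial M_{i}$. Moreover $e^{n-1} = 0$ on $\partial M$ by Lemma \ref{lem_algorithm_wellpose}, so $e^{n}_{i} = 0$ on $\partial M_{i} \setminus \overline{\gamma_{i}}$ and $|e^{n}_{i}| \le \| e^{n-1} \|_{C^{0}(M)}$ on $\overline{\gamma_{i}}$. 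Because $\sum_{i} \rho_{i} = 1$, we also have $e^{n} = \sum_{i=1}^{m} \rho_{i} e^{n}_{i}$, with each $\rho_{i} e^{n}_{i}$ extended by zero outside $M_{i}$.

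For (1), put $E := \| e^{n-1} \|_{C^{0}(M)}$. The functions $E \pm e^{n}_{i}$ satisfy $\mathfrak{L}(E \pm e^{n}_{i}) = cE \ge 0$ in $M_{i} \setminus \partial M_{i}$, since $c \ge 0$, and they are $\ge 0$ on $\partial M_{i}$. The weak maximum principle (Corollary \ref{cor_maximum}, applied on $M_{i}$, which has nonempty boundary, exactly as in the uniqueness part of Lemma \ref{lem_algorithm_wellpose}) then gives $|e^{n}_{i}| \le E$ on $M_{i}$. This is the second inequality. The first follows pointwise from convexity: $|e^{n}(x)| \le \sum_{i} \rho_{i}(x) |e^{n}_{i}(x)| \le \max_{i} \| e^{n}_{i} \|_{C^{0}(M_{i})}$, since only indices with $x \in \mathrm{supp}\rho_{i} \subset M_{i}$ contribute.

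For (2), compare $e^{n}_{i}$ with $E\theta_{i}$, where $\theta_{i}$ is the Perron solution of \eqref{eqn_bound_function}. Both are $\mathfrak{L}$-harmonic in $M_{i} \setminus \partial M_{i}$. On $\overline{\gamma_{i}}$ we have $E\theta_{i} = E \ge \pm e^{n}_{i}$, and on $\partial M_{i} \setminus \overline{\gamma_{i}}$ both vanish. Hence $E\theta_{i} \mp e^{n}_{i}$ is $\mathfrak{L}$-harmonic with nonnegative boundary values. We then want $|e^{n}_{i}| \le E\theta_{i}$ on $M_{i}$; combined with the partition of unity this gives
\[
|e^{n}(x)| \le \sum_{i} \rho_{i}(x)\, E\, \theta_{i}(x) \le E \sum_{i} \rho_{i}(x)\, \Theta_{i} \le \Theta E,
\]
because $\rho_{i}(x) \neq 0$ forces $x \in \mathrm{supp}\rho_{i}$, where $\theta_{i} \le \Theta_{i}$. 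Iterating yields $\| e^{n} \| \le \Theta^{n} \| e^{0} \|$.

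The main obstacle is justifying $|e^{n}_{i}| \le E\theta_{i}$. The boundary data of $\theta_{i}$ are discontinuous wherever $\overline{\gamma_{i}}$ meets $\partial M_{i} \setminus \overline{\gamma_{i}}$, that is, along $\overline{\gamma_{i}} \cap \partial M$. So $\theta_{i}$ is only a Perron solution, possibly discontinuous there, and the classical maximum principle cannot be applied directly to $E\theta_{i} \mp e^{n}_{i}$. The plan is to work inside the Perron framework of $\S$\ref{subsec_Perron}. Since $e^{n}_{i}$ is continuous on $M_{i}$ with boundary values $\le E\cdot \chi_{\overline{\gamma_{i}}}$, where $\chi$ denotes the indicator function, the function $\pm e^{n}_{i}$ is a subsolution admissible in the Perron family for the data $(E \text{ on } \overline{\gamma_{i}}, 0 \text{ elsewhere})$. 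The Perron solution is the supremum of that family, so $\pm e^{n}_{i} \le E\theta_{i}$. This uses linearity of the Perron construction, $\theta$ for data $E\chi$ equals $E\theta_{i}$, and the requirement that subsolutions have boundary $\limsup$ bounded by the data, which holds because $e^{n}_{i}$ is continuous up to $\partial M_{i}$. If the paper's Perron definition is phrased via upper and lower classes, the same comparison works with $-e^{n}_{i}$ in place of $e^{n}_{i}$.
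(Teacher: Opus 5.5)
Your proposal is correct and follows essentially the paper's route: part (1) by the pointwise partition-of-unity bound plus the weak maximum principle, and part (2) by bounding $\pm(u^{n}_{i}-u)$ by $\|u^{n-1}-u\|_{C^{0}(M)}\,\theta_{i}$ through the Perron subfunction class and then summing against the $\rho_{i}$. The paper phrases this step as monotonicity and scaling of Perron solutions (Lemma \ref{lem_perron}(3)--(4)) and packages it as $|Tv|\le\|v\|\tau\le\Theta\|v\|$. One minor slip: your heuristic remark that $E\theta_{i}=E$ on all of $\overline{\gamma_{i}}$ can fail at points of $\overline{\gamma_{i}}\cap\partial M$, but the Perron-class argument you actually use does not need it.
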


Theorems \ref{thm_bound} and \ref{thm_rate} immediately imply the following.
\begin{corollary}\label{cor_rate_nondegenerate}
Assuming further, in \eqref{eqn_operator}, the function $c>0$ somewhere in $M_{i}$ for all $i$ satisfying $\partial M_{i} = \overline{\gamma_{i}}$. (Particularly, $c>0$ on $M$.) Then Algorithm \ref{alg_continuous_parallel} converges geometrically, and $\| u^{n} - u \|_{C^{0} (M)} \le \Theta \| u^{n-1} - u \|_{C^{0} (M)}$ for all $n>0$ with the constant $\Theta <1$.
\end{corollary}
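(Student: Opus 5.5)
This corollary follows by combining Theorems \ref{thm_bound} and \ref{thm_rate}; the plan is essentially bookkeeping with one small check.

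First, the hypothesis of the corollary is exactly the extra hypothesis in the second sentence of Theorem \ref{thm_bound}: $c>0$ somewhere in $M_{i}$ for every $i$ with $\partial M_{i} = \overline{\gamma_{i}}$. Hence Theorem \ref{thm_bound} gives $0<\Theta<1$. For the parenthetical special case I will note that if $c>0$ on all of $M$, then trivially $c>0$ somewhere in each nonempty $M_{i}$, so the hypothesis holds regardless of which indices satisfy $\partial M_{i}=\overline{\gamma_{i}}$. I will also check that Assumption \ref{asp_wellpose} is compatible with this hypothesis, which it is since $c\ge 0$ and $c\ne 0$.

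Next, for any admissible initial guess $u^{0}\in C^{0}(M)$ with $u^{0}|_{\partial M}=\varphi$, Theorem \ref{thm_rate}(2) gives the one-step contraction $\|u^{n}-u\|_{C^{0}(M)}\le \Theta\|u^{n-1}-u\|_{C^{0}(M)}$ for every $n>0$. This is the displayed inequality of the corollary. Iterating it, or quoting the second half of Theorem \ref{thm_rate}(2) directly, yields $\|u^{n}-u\|_{C^{0}(M)}\le \Theta^{n}\|u^{0}-u\|_{C^{0}(M)}$.

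Finally, I will match this with Definition \ref{def_rate} by taking $C=1$ and $L=\Theta\in[0,1)$. Both constants are independent of $u^{0}$ and $n$, since $\Theta$ depends only on $\mathcal{D}$, $\rho$ and $\mathfrak{L}$ through the Perron solutions $\theta_{i}$ of \eqref{eqn_bound_function}. This establishes geometric convergence. There is no genuine obstacle here; all the substance lies in Theorems \ref{thm_bound} and \ref{thm_rate}, in particular the strict inequality $\Theta<1$, which comes from the strong maximum principle applied to the $\theta_{i}$ on the compact sets $\mathrm{supp}\,\rho_{i}\subset M_{i}\setminus\overline{\gamma_{i}}$.
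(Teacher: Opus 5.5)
Your proposal is correct and matches the paper, which states that the corollary follows immediately from Theorem \ref{thm_bound} (giving $\Theta<1$) and Theorem \ref{thm_rate}(2), i.e., geometric convergence with $C=1$ and $L=\Theta$. One small slip, in a remark that is not needed: the hypothesis does not force $c\ne 0$. If no $i$ satisfies $\partial M_{i}=\overline{\gamma_{i}}$, the hypothesis is vacuous and $c\equiv 0$ is allowed. This is harmless, because in that case every $M_{i}$ meets $\partial M$, so (2) of Assumption \ref{asp_wellpose} holds, and that assumption is a standing hypothesis in any event.
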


\begin{remark}
By (1) of Theorem \ref{thm_rate}, we see $\max_{1 \le i \le m} \{ \| u^{n}_{i} - u \|_{C^{0} (M_{i})} \}$ and $\| u^{n} - u \|_{C^{0} (M)}$ share the same upper bound of convergence rate. In fact, set $u^{0}_{i} = u^{0}|_{M_{i}}$ and suppose \eqref{def_rate_1} with $L>0$, then
\begin{align*}
\max_{1 \le i \le m} \{ \| u^{n}_{i} - u \|_{C^{0} (M_{i})} \} & \le \| u^{n-1} - u \|_{C^{0} (M)} \le C L^{n-1} \| u^{0} - u\|_{C^{0} (M)} \\
& = \tfrac{C}{L} L^{n} \| u^{0} - u\|_{C^{0} (M)} = \tfrac{C}{L} L^{n} \max_{1 \le i \le m} \{ \| u^{0}_{i} - u \|_{C^{0} (M_{i})} \},
\end{align*}
Similarly, $\max_{1 \le i \le m} \{ \| u^{n}_{i} - u \|_{C^{0} (M_{i})} \} \le CL^{n} \max_{1 \le i \le m} \{ \| u^{0}_{i} - u \|_{C^{0} (M_{i})} \}$ implies $\| u^{n} - u \|_{C^{0} (M)} \le C L^{n} \| u^{0} - u\|_{C^{0} (M)}$.
\end{remark}

In the case of (2) in Assumption \ref{asp_wellpose}, it can happen that $\Theta =1$. For example, $c=0$ and the winding number $N>1$. Fortunately, we can prove the following result.

\begin{theorem}\label{thm_rate_boundary}
Assuming further $\partial M \ne \emptyset$, let $N$ be the winding number in Definition \ref{def_winding}. Let $u^{0} \in C^{0} (M)$ satisfying $u^{0}|_{\partial M} = \varphi$ be an initial guess of Algorithm \ref{alg_continuous_parallel}. Let $u^{n}$ be the approximation generated by Algorithm \ref{alg_continuous_parallel}.

Then there exists a constant $L \in (0,1)$ such that: $\forall n \ge 0$,
\begin{enumerate}[(1)]
\item $\| u^{n+N} - u \|_{C^{0} (M)} \le L^{N} \| u^{n} - u \|_{C^{0} (M)}$;

\item there exists a constant $C>0$ such that
\[
\| u^{n} - u \|_{C^{0} (M)} \le C L^{n} \| u^{0} - u \|_{C^{0} (M)}.
\]
\end{enumerate}
Here $L$ and $C$ do not depend on $n$, $u^{0}$ and $u$.
\end{theorem}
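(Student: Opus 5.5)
Set $e^{n} := u^{n} - u$ and $e^{n}_{i} := u^{n}_{i} - u$. By linearity, $e^{n}_{i}$ is the Perron solution of $\mathfrak{L} e^{n}_{i} = 0$ in $M_{i} \setminus \partial M_{i}$, with $e^{n}_{i} = e^{n-1}$ on $\partial M_{i}$; in particular $e^{n}_{i} = 0$ on $\partial M_{i} \setminus \overline{\gamma_{i}} \subseteq \partial M$. Moreover $e^{n} = \sum_{i} \rho_{i} e^{n}_{i}$. Put $E := \| e^{n} \|_{C^{0}(M)}$ for a fixed starting index $n$. By (1) of Theorem \ref{thm_rate} the errors are nonincreasing. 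The plan is an induction on $k = 1, \dots, N$ that tracks how much the error drops on the sets $U_{j}$ for $j \in I_{k}$. Since $\overline{U_{j}} = \mathrm{supp} \rho_{j}$ may touch $\partial U_{j}$, I will work with the supports, or with compact sets $K_{j} \subset M_{j} \setminus \overline{\gamma_{j}}$ containing $\mathrm{supp}\rho_{j}$.

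\textbf{Inductive claim.} There are constants $\delta_{1}, \dots, \delta_{N} \in (0,1)$, depending only on $\mathcal{D}$, $\rho$ and $\mathfrak{L}$, such that for each $k$:
\begin{enumerate}[(a)]
\item $|e^{n+k}_{j}| \le \delta_{k} E$ on $\mathrm{supp}\rho_{j}$ for all $j \in I_{k}$;
\item $|e^{n+k}| \le \delta_{k} E$ on $\bigcup_{j \in I_{k}} \mathrm{supp}\rho_{j}$.
\end{enumerate}

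\textbf{Base case $k=1$.} Let $i \in I_{1}$, so $\partial M_{i} \ne \overline{\gamma_{i}}$. Since $|e^{n}_{i}|$ has boundary values at most $E\theta_{i}$, the comparison principle (maximum principle for Perron solutions) gives $|e^{n+1}_{i}| \le E\theta_{i} \le \Theta_{i} E$ on $\mathrm{supp}\rho_{i}$. The point is that $\Theta_{i} < 1$ because $\theta_{i}$ vanishes on the nonempty part $\partial M_{i}\setminus\overline{\gamma_{i}}$. This is exactly the mechanism in Theorem \ref{thm_bound}: the strong maximum principle on the connected $M_{i}$ makes $\theta_{i} < 1$ in the interior, and compactness of $\mathrm{supp}\rho_{i}$ gives a strict bound. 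Part (b) then follows by convexity. At a point $x$ of the union, the indices in $I_{1}$ contribute weight at most $\delta_{1}E$. The remaining indices $j$ contribute at most $E$ each, with total weight $1 - \sum_{i \in I_{1}}\rho_{i}(x)$.

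\textbf{Main obstacle.} That last step shows (b) is not automatic: a convex combination stays close to $E$ wherever $\sum_{i\in I_{k}}\rho_{i}$ is small. So I will track an improvement that is weighted pointwise. The claim to carry is
\[
|e^{n+k}(x)| \le E\bigl(1 - \eta_{k}\,\sigma_{k}(x)\bigr), \qquad \sigma_{k} := \sum_{j\in I_{k}} \rho_{j},
\]
with $\eta_{k} > 0$. This will be proved as follows. For $i \in I_{k+1}\setminus I_{k}$, the boundary $\partial M_{i}$ meets some $U_{j}$ with $j\in I_{k}$. On a relatively open piece of $\partial M_{i}$ (nonempty, of positive harmonic measure since $\partial M_{i}$ is Lipschitz), $\sigma_{k}$ is bounded below, so $|e^{n+k}| \le (1-\epsilon)E$ there. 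I compare $e^{n+k+1}_{i}$ with the Perron solution $\omega_{i}$ having boundary value $1$ except on a smaller closed patch $P_{i}$ inside that piece, where it equals $1-\epsilon$; a continuous cutoff can be used to stay within $C^{0}$ data. This gives $|e^{n+k+1}_{i}| \le E\,\omega_{i}$. By the strong maximum principle, $\sup_{\mathrm{supp}\rho_{i}}\omega_{i} < 1$. To see this, note that $1-\omega_{i} \ge 0$ solves $\mathfrak{L}(1-\omega_{i}) = c \ge 0$, is nonzero, and so is strictly positive in the interior. For indices already in $I_{k}$, the same argument applies with the whole boundary, using that the errors are monotone. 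For $k = N$ we have $I_{N}$ equal to all indices by Proposition \ref{prop_winding}, so $\sigma_{N} \equiv 1$ and $\|e^{n+N}\| \le (1-\eta_{N})E$. This yields (1) with $L := (1-\eta_{N})^{1/N} \in (0,1)$; the constants are independent of $n$, $u^{0}$, $u$ because everything is linear and homogeneous in $E$.

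\textbf{Part (2).} Write $n = qN + r$ with $0 \le r < N$. Applying (1) repeatedly together with the monotonicity in Theorem \ref{thm_rate}(1) gives
\[
\|e^{n}\| \le L^{qN}\|e^{0}\| \le L^{-N} L^{n}\|e^{0}\|,
\]
so $C = L^{-N}$ works.
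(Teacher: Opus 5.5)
Your argument is correct. It shares the paper's skeleton: induction along $I_1\subseteq\cdots\subseteq I_N$, the strong maximum principle carrying a strict gain from part of $\partial M_i$ into $\mathrm{supp}\rho_i$, and averaging through the partition of unity. The difference is in how you keep track of the gain.

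The paper tracks no constants. It packages one sweep as the linear, positive error operator $T$ and obtains the single envelope $|T^{k}v|\le\|v\|\,T^{k-1}\tau$ with $\tau=\sum_i\rho_i\theta_i$ (Proposition \ref{prop_n_iteration_bound}). It then proves only the qualitative fact that $T^{k}\tau<1$ on $\bigcup_{i\in I_{k+1}}U_i$ (Proposition \ref{prop_iteration_function}). Continuity and compactness enter once, at the end, to give $L=\|T^{N-1}\tau\|^{1/N}<1$. That is why it needs no patches, cutoffs or auxiliary barriers $\omega_i$, and why its $L$ has a closed form.

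Your quantitative envelope $E(1-\eta_k\sigma_k)$ requires more work at every stage and for every new index. You must pick $y\in\partial M_i\cap U_j$, a neighborhood where $\rho_j>\rho_j(y)/2$, a cutoff, and run a separate compactness argument for $\sup_{\mathrm{supp}\rho_i}\omega_i<1$. This is more hands-on and yields a constructive chain of constants, but the rate is cruder. Running your comparisons on $T^{k-1}\tau$ shows $T^{k-1}\tau\le 1-\eta_k\sigma_k$, so your $L$ is never smaller than the paper's.

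Two places need tightening:
\begin{enumerate}
\item Replace the vague sentence about indices already in $I_k$ with the dichotomy of Lemma \ref{lem_winding}(1).
\begin{itemize}
\item If $\partial M_i\ne\overline{\gamma_i}$, use $\theta_i$. This is valid at every stage, since $\|e^{n+k}\|\le E$ and $e^{n+k}=0$ on $\partial M$.
\item Otherwise $\partial M_i$ meets some $U_j$ with $j\in I_k$, and your patch argument applies because $\sigma_k\ge\rho_j$.
\end{itemize}
\item Your choice $C=L^{-N}$ needs $L>0$, so note that $\eta_N<1$. This holds because $\Theta_i>0$ and $\omega_i>0$ in the interior; alternatively, simply enlarge $L$.
\end{enumerate}
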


The proof of Theorem \ref{thm_rate} is relatively straightforward. However, that of Theorem \ref{thm_rate_boundary} is much more complicated.

The (1) of Theorem \ref{thm_rate_boundary} suggests that the convergence would be slow if the winding number $N$ is large. The following theorem shows that this is indeed true in some special case.

\begin{theorem}\label{thm_slow}
Suppose $\partial M \ne \emptyset$, $N>2$, and $M \ne \bigcup_{i \ne j} M_{i}$ for each $j$. Suppose further $c=0$ in \eqref{eqn_operator}. Then there is a $u^{0} \in C^{0} (M)$ such that $u^{0}|_{\partial M} = \varphi$, $u^{0} \ne u$ and
\begin{equation}\label{thm_slow_1}
\| u^{n} - u \|_{C^{0} (M)} = \| u^{0} - u \|_{C^{0} (M)} \qquad \text{for $n<N-1$}.
\end{equation}
Assume additionally $M_{i} \cap \partial M = \emptyset$ for every $i$ satisfying $\partial M_{i} = \overline{\gamma_{i}}$, then the $u^{0}$ can be chosen such that \eqref{thm_slow_1} holds for $n < N$.
\end{theorem}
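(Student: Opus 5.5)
The proof works with the error. Set $e^{n} := u^{n} - u$ and $e^{n}_{i} := u^{n}_{i} - u$. By linearity, $e^{n}_{i}$ is the (Perron) solution of $\mathfrak{L} e^{n}_{i} = 0$ in $M_{i} \setminus \partial M_{i}$ with $e^{n}_{i} = e^{n-1}$ on $\partial M_{i}$, and $e^{n} = \sum_{i} \rho_{i} e^{n}_{i}$, with $e^{n}|_{\partial M} = 0$. Since $c = 0$, constants solve $\mathfrak{L} v = 0$. Uniqueness (as in Lemma \ref{lem_algorithm_wellpose}) then gives the key observation: \emph{if $e^{n-1} \equiv 1$ on $\partial M_{i}$, then $e^{n}_{i} \equiv 1$ on $M_{i}$.} Moreover, if $0 \le e^{0} \le 1$, then by the maximum principle (as in (1) of Theorem \ref{thm_rate}) we get $0 \le e^{n} \le 1$ for all $n$. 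So it suffices to find a continuous $e^{0}$ with values in $[0,1]$, vanishing on $\partial M$, for which $e^{n}$ attains the value $1$ somewhere for every $n < N-1$. Then $u^{0} := u + e^{0}$ is the required initial guess, and $u^{0} \ne u$.

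Let $V_{k} := \bigcup_{j \in I_{k}} U_{j}$. The inductive claim is: if $e^{n-1} = 1$ on $M \setminus V_{k}$, then $e^{n} = 1$ on $M \setminus V_{k+1}$. To see this, take $i \notin I_{k+1}$. By definition of $I_{k+1}$ we have $\partial M_{i} \cap V_{k} = \emptyset$, so $e^{n-1} = 1$ on $\partial M_{i}$, and hence $e^{n}_{i} \equiv 1$ on $M_{i}$. Now take $x \notin V_{k+1}$. Every $i$ with $\rho_{i}(x) > 0$ satisfies $x \in U_{i}$, so $i \notin I_{k+1}$. Therefore $e^{n}(x) = \sum_{i} \rho_{i}(x) \cdot 1 = 1$.

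For the initialization I would use $k=1$. First I would check that every point of $\partial M$ lies in $V_{1}$. Indeed, if $x \in \partial M$ and $\rho_{i}(x) > 0$, then $M_{i} \cap \partial M \ne \emptyset$, which forces $\partial M_{i} \ne \overline{\gamma_{i}}$ (by the boundary-interior discussion of $\S$\ref{subsec_boundary}), i.e.\ $i \in I_{1}$. This step is the one I expect to need the most care, since it relies on the fine structure of $\partial M_{i}$ versus $\gamma_{i}$. Granting it, $M \setminus V_{1}$ is compact and disjoint from $\partial M$, so Urysohn's lemma gives a continuous $e^{0}$ with values in $[0,1]$, equal to $0$ on $\partial M$ and $1$ on $M \setminus V_{1}$. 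Induction then yields $e^{n} = 1$ on $M \setminus V_{n+1}$.

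It remains to see that $M \setminus V_{N-1} \ne \emptyset$. Since $I_{N-1} \ne I_{N} = \{1,\dots,m\}$ (Proposition \ref{prop_winding}), pick $j \notin I_{N-1}$. The hypothesis $M \ne \bigcup_{i \ne j} M_{i}$ gives a point $x$ lying in no $M_{i}$ with $i \ne j$. Then $\rho_{i}(x) = 0$ for all $i \ne j$, so $x$ lies only in $U_{j}$ and hence $x \notin V_{N-1}$. Since the sets $V_{k}$ increase in $k$, for $n+1 \le N-1$ we get $\| e^{n} \|_{C^{0}(M)} = 1 = \| e^{0} \|_{C^{0}(M)}$, which is \eqref{thm_slow_1} for $n < N-1$.

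For the refined statement, assume $M_{i} \cap \partial M = \emptyset$ whenever $i \notin I_{1}$. Then the compact set $\bigcup_{i \notin I_{1}} \partial M_{i}$ is disjoint from $\partial M$, so I would choose a neighborhood $W$ of $\partial M$ that misses it. Take $e^{0}$ with values in $[0,1]$, vanishing on $\partial M$ and equal to $1$ on $M \setminus W$. Then for every $i \notin I_{1}$ the boundary data satisfy $e^{0} = 1$ on $\partial M_{i}$, so $e^{1}_{i} \equiv 1$ and $e^{1} = 1$ on $M \setminus V_{1}$. This gains one step: the induction gives $e^{n} = 1$ on $M \setminus V_{n}$, which is nonempty for $n \le N-1$, i.e.\ for all $n < N$.
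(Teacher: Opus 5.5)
Your argument is essentially the paper's (Lemma~\ref{lem_slow} plus Urysohn's lemma). It uses the same propagation of the value $1$ to the complement of $\bigcup_{j\in I_{k}}U_{j}$, based on the fact that constants solve $\mathfrak{L}v=0$ when $c=0$, and the same use of $M\neq\bigcup_{i\neq j}M_{i}$ for nonemptiness. The paper puts the Urysohn value $1$ on $\bigcup_{i\notin I_{2}}M_{i}$ (resp.\ $\bigcup_{i\notin I_{1}}M_{i}$) instead of on $M\setminus V_{1}$ (resp.\ $M\setminus W$). After one iteration this gives the same conclusion $e^{1}=1$ off $V_{2}$ (resp.\ $V_{1}$).

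One step, however, is justified incorrectly. The implication ``$M_{i}\cap\partial M\neq\emptyset$ forces $\partial M_{i}\neq\overline{\gamma_{i}}$'' is false. The subdomain $M_{3}$ in the right panel of Fig.~\ref{fig_winding} touches $\partial M$ at a single point, yet $\partial M_{3}=\overline{\gamma_{3}}$. Moreover, if your implication held, the extra hypothesis in the second half of the theorem (``$M_{i}\cap\partial M=\emptyset$ whenever $\partial M_{i}=\overline{\gamma_{i}}$'') would be automatic.

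What you actually need, $\partial M\subseteq V_{1}$, is true; it is Lemma~\ref{lem_boundary_neighborhood}(1). The argument must use $\rho_{i}(x)>0$, not just $M_{i}\cap\partial M\neq\emptyset$:
\begin{itemize}
\item since $\rho_{i}(x)>0$, we have $x\in\mathrm{supp}\,\rho_{i}\subset M_{i}\setminus\overline{\gamma_{i}}$;
\item since $x\in\partial M$, we have $x\in M_{i}\cap\partial M=\partial M_{i}\cap\partial M$;
\item hence $x\in\partial M_{i}\setminus\overline{\gamma_{i}}$, so $\partial M_{i}\neq\overline{\gamma_{i}}$ and $i\in I_{1}$.
\end{itemize}
With this repair, the rest of your proof goes through as written.
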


Theorem \ref{thm_slow} indicates that, in certain cases, the $C^{0}$-norms of the errors of iterative approximations do not decrease until step $N$; see also Remark \ref{rmk_slow}. This also shows the estimate in (1) of Theorem \ref{thm_rate_boundary} is sharp.

\begin{remark}
Theorem 5.1 in \cite{qin_xu06} also demonstrates that a higher winding number leads to slower convergence in that setting.
\end{remark}

The above results are insufficient, under Assumptions \ref{asp_wellpose} and \ref{asp_decomposition} alone, to establish the geometric convergence of Algorithm \ref{alg_continuous_parallel}. To see this, suppose $\partial M = \emptyset$ and $c>0$ somewhere, but $c=0$ in some subdomain $M_{i}$. Then Theorem \ref{thm_rate_boundary} does not apply. Furthermore, $\Theta =1$ since $\Theta_{i} =1$, so Theorem \ref{thm_rate} is of no help either. Nevertheless, Algorithm \ref{alg_continuous_parallel} does converge geometrically in full generality. Actually, there is a general Theorem \ref{thm_converge} which covers Corollary \ref{cor_rate_nondegenerate} and Theorems \ref{thm_rate_boundary} and \ref{thm_slow}, and ensures the desired geometric convergence. Although the idea of its proof parallels that of Theorem \ref{thm_rate_boundary}, its formulation and proof are technically more complicated. To fix ideas and enhance the readability of this paper, we shall refrain from giving a full proof and instead provide a brief exposition of the main idea in $\S$\ref{subsec_converge_general}.

\subsection{Convergence Rate: Comparison}
By Theorem \ref{thm_rate}, the $\Theta$ in Theorem \ref{thm_bound} is an upper bound on the convergence rate of Algorithm \ref{alg_continuous_parallel}. Since $\Theta$ depends on the function $c$, the decomposition $\mathcal{D}$ and the partition of unity $\rho$, we write it as $\Theta (c, \mathcal{D}, \rho)$. Our aim now is to investigate this dependence.

\begin{theorem}\label{thm_comparison}
Let $\tilde{c}$ be another $C^{\infty}$ function on $M$. Let $\widetilde{\mathcal{D}} := \{ \widetilde{M}_{1}, \dots, \widetilde{M}_{m} \}$ be another decomposition. Let $\widetilde{\gamma}_{i} := \partial \widetilde{M}_{i} \setminus \partial M$. Then the followings hold:
\begin{enumerate}[(1)]
\item If $\tilde{c} \ge c$, then $\Theta (\tilde{c}, \mathcal{D}, \rho) \le \Theta (c, \mathcal{D}, \rho)$.

\item If $\tilde{c} \ge c$, and $\tilde{c} >c$ somewhere in $M_{i}$ for all $i$ (particularly, $\tilde{c} >c$ on $M$), then $\Theta (\tilde{c}, \mathcal{D}, \rho) < \Theta (c, \mathcal{D}, \rho)$.

\item Suppose $c$ takes constant value, then $\lim\limits_{c \rightarrow +\infty} \Theta (c, \mathcal{D}, \rho) =0$.

\item If $\widetilde{M}_{i} \supseteq M_{i}$ for all $i$, then $\Theta (c, \widetilde{\mathcal{D}}, \rho) \le \Theta (c, \mathcal{D}, \rho)$.

\item If $\Theta (c, \mathcal{D}, \rho) <1$ and $\widetilde{M}_{i} \setminus \overline{\widetilde{\gamma}_{i}} \supseteq M_{i}$ for all $i$, then $\Theta (c, \widetilde{\mathcal{D}}, \rho) < \Theta (c, \mathcal{D}, \rho)$.
\end{enumerate}
\end{theorem}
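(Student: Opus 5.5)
The whole argument rests on comparing the Perron solutions $\theta_{i}$ of \eqref{eqn_bound_function} for different data by the maximum principle (Corollary \ref{cor_maximum}). Write $\theta_{i}$ for the solution with $(c,\mathcal{D})$ and $\tilde\theta_{i}$ for the solution with $(\tilde c,\mathcal{D})$ or $(c,\widetilde{\mathcal{D}})$. Throughout I use $0\le \theta_{i}\le 1$, which holds because the boundary data lie in $[0,1]$ and $c\ge 0$. This was presumably already used in proving Theorem \ref{thm_bound}. Since $\Theta=\max_i \sup_{\mathrm{supp}\rho_i}\theta_i$ and $\rho$ is unchanged in every item, each claim reduces to a pointwise inequality $\tilde\theta_i\le\theta_i$ (or $<$) on the compact set $\mathrm{supp}\rho_i$.

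\textbf{Items (1) and (2).} Let $w=\theta_i-\tilde\theta_i$. Then
\[
\mathfrak{L}_{\tilde c}w=\mathfrak{L}_{\tilde c}\theta_i=(\tilde c-c)\theta_i\ge 0,
\]
and $w=0$ on $\partial M_i$. The weak maximum principle for $\mathfrak{L}_{\tilde c}$ on $M_i$ (where $\tilde c\ge0$ and $\partial M_i\neq\emptyset$) gives $w\ge0$, which proves (1).

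For (2), first suppose $\theta_i\equiv 0$. Then $\tilde\theta_i\equiv0$ as well, so nothing can be strict. This happens only when $\gamma_i=\emptyset$, which is impossible because $M_i\ne M$. So $\theta_i>0$ in the interior by the strong maximum principle. Hence $\mathfrak{L}_{\tilde c}w\ge0$ with $\mathfrak{L}_{\tilde c}w\not\equiv 0$, since $\tilde c>c$ somewhere in $M_i$; continuity gives an open set where it is positive. So $w\not\equiv0$, and the strong maximum principle gives $w>0$ on the interior of $M_i$, which contains the compact set $\mathrm{supp}\rho_i$. By compactness, $\sup_{\mathrm{supp}\rho_i}\tilde\theta_i<\sup_{\mathrm{supp}\rho_i}\theta_i$ for every $i$, and taking the maximum over $i$ gives strict inequality.

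\textbf{Item (3).} Let $K=\mathrm{supp}\rho_i$, which is compact and has positive distance from $\overline{\gamma_i}$. For each $x_0\in K$ I would build a local barrier on a small coordinate ball $B$ around $x_0$ inside the interior of $M_i$, for example $\psi=\cosh(\sqrt{\lambda c}\,r)/\cosh(\sqrt{\lambda c}\,R)$ with $r$ a smoothed distance to $x_0$. For $c$ large and a suitable small $\lambda$, one has $\mathfrak{L}\psi\ge0$ on $B$, and $\psi\ge1\ge\theta_i$ on $\partial B$. A cleaner choice is $\psi=e^{-\sqrt{\lambda c}(R-|x-x_0|)}$ smoothed near $x_0$, or a quadratic-exponential function. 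Comparison on $B$ then gives $\theta_i(x_0)\le\psi(x_0)\to0$. The main obstacle is making this uniform in $x_0\in K$, with the first-order terms and the metric varying. Covering $K$ by finitely many balls of a fixed radius and using uniform ellipticity and bounded coefficients in finitely many charts should handle it.

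\textbf{Items (4) and (5).} Set $\tilde M_i\supseteq M_i$. On $M_i$, $\tilde\theta_i$ solves $\mathfrak{L}\tilde\theta_i=0$. On $\partial M_i\setminus\overline{\gamma_i}\subseteq\partial M$ it is $0$, because that part lies in $\partial M$ and in $\tilde M_i$, and so lies in $\partial\tilde M_i\setminus\overline{\tilde\gamma_i}$, modulo closure points where continuity settles it. On $\overline{\gamma_i}$ it is at most $1$. The weak maximum principle then gives $\tilde\theta_i\le\theta_i$ on $M_i$, proving (4).

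For (5), $\overline{\gamma_i}$ now lies in the interior of $\tilde M_i$. At each such point $\tilde\theta_i<1$: otherwise the strong maximum principle would force $\tilde\theta_i\equiv1$. That is impossible, since then $\Theta_i(c,\widetilde{\mathcal D})=1$, contradicting $\Theta(c,\widetilde{\mathcal{D}})\le\Theta(c,\mathcal{D})<1$ from (4); alternatively, it contradicts $\mathfrak{L}1=c$ together with the boundary data. By compactness, $\tilde\theta_i\le 1-\delta$ on $\overline{\gamma_i}$, so comparison gives $\tilde\theta_i\le(1-\delta)\theta_i$ on $M_i$. Hence $\Theta_i(\widetilde{\mathcal D})\le(1-\delta)\Theta_i$. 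Taking maxima, and using $\Theta>0$, gives the strict inequality.

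The delicate points are the boundary-value bookkeeping for Perron solutions at $\overline{\gamma_i}\cap\partial M$, and the uniform barrier in (3).
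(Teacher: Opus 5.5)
\textbf{Items (1), (2), (4): the maximum-principle step is not justified.} You apply the weak maximum principle to $w=\theta_i-\tilde\theta_i$ using ``$w=0$ on $\partial M_i$''. But Theorem~\ref{thm_maximum}(2) requires $w\in C^{0}(M_i)$, and Lemma~\ref{lem_bound_perron}(1) gives continuity of $\theta_i$ only on $M_i\setminus(\overline{\gamma_i}\cap\partial M)$. At a point $p\in\overline{\gamma_i}\cap\partial M$ that is a limit of points of $\partial M_i\setminus\overline{\gamma_i}$ (the small circles in the left panel of Fig.~\ref{fig_boundary}), the data jump from $1$ to $0$ and both Perron solutions are discontinuous. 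The maximum principle actually needs $\liminf_{x\to p}w(x)\ge 0$, and nothing in your argument controls that. In (4) the same happens at corners lying in both $\overline{\gamma_i}$ and $\overline{\widetilde\gamma_i}$, so ``continuity settles it'' is not available there either. You flag this as bookkeeping, but it is the crux.

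The missing idea is to compare Perron subfunctions directly, which needs no boundary continuity.
For (1), let $v\ge 0$ be a subfunction for $\widetilde{\mathfrak L}$ and $B$ a coordinate ball. The $\widetilde{\mathfrak L}$-solution $\tilde w$ with $\tilde w|_{\partial B}=v$ satisfies $\tilde w\ge v\ge0$, hence $\mathfrak L\tilde w=(c-\tilde c)\tilde w\le 0$. So the $\mathfrak L$-solution with the same boundary values dominates $\tilde w\ge v$, and $v$ is a subfunction for $\mathfrak L$. Since $0$ is a subfunction for both operators, $\tilde\theta_i\le\theta_i$ follows from the definition; this is Lemma~\ref{lem_operator_comparison}.
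For (4), a subfunction $v$ for $\tilde\theta_i$ restricts to one for $\theta_i$. Indeed $v\le\tilde\theta_i\le1$ on $\overline{\gamma_i}$, and $v\le0$ on $\partial M_i\setminus\overline{\gamma_i}\subseteq\partial\widetilde M_i\setminus\overline{\widetilde\gamma_i}$ by Lemma~\ref{lem_subdomain_interior}.
Once $\tilde\theta_i\le\theta_i$ is known, your strong-maximum-principle argument for (2) works in $M_i\setminus\partial M_i$. However, $\mathrm{supp}\rho_i$ is not contained in $M_i\setminus\partial M_i$: it can meet $\partial M$, where $\theta_i=\tilde\theta_i=0$. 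Strictness survives because the maximum of $\tilde\theta_i$ over $\mathrm{supp}\rho_i$ is either attained in $M_i\setminus\partial M_i$ or equals $0<\Theta_i$.

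\textbf{Item (3): uniformity fails near $\partial M$.} For $x_0\in\mathrm{supp}\rho_i\cap\partial M$ there is no coordinate ball around $x_0$ inside $M_i\setminus\partial M_i$, and balls of a fixed radius do not fit near $\partial M$. So your finite covering of $\mathrm{supp}\rho_i$ does not exist. This case always occurs when $\partial M\neq\emptyset$, since $\sum_i\rho_i=1$ on $\partial M$. The paper avoids any uniform barrier:
\begin{itemize}
\item a pointwise barrier (yours is fine) gives $\theta_i(c)(x)\to0$ for each $x\in M_i\setminus\partial M_i$;
\item $\theta_i(c)=0$ on $\partial M_i\setminus\overline{\gamma_i}$;
\item by (1), $\theta_i(c)$ is nonincreasing in $c$;
\item $\theta_i(c)$ is continuous on the compact set $\mathrm{supp}\rho_i\subset M_i\setminus\overline{\gamma_i}$.
\end{itemize}
Dini's theorem then upgrades pointwise convergence to uniform convergence on $\mathrm{supp}\rho_i$.

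\textbf{Item (5) is sound.} It is a nice quantitative variant of the paper's strong-maximum-principle argument. Since $M_i\subseteq\widetilde M_i\setminus\overline{\widetilde\gamma_i}$, the restriction $\tilde\theta_i|_{M_i}$ is continuous on $M_i$. It is therefore itself a subfunction for the data equal to $1-\delta$ on $\overline{\gamma_i}$ and $0$ on $\partial M_i\setminus\overline{\gamma_i}$. Hence $\tilde\theta_i\le(1-\delta)\theta_i$ follows from the Perron definition and Lemma~\ref{lem_perron}(3), with no continuity of $\theta_i$ needed. The same observation also proves (4) under the hypothesis of (5), so (5) does not inherit the gap in (4).
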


\begin{remark}
Parts (1)-(3) of Theorem \ref{thm_comparison} indicate that the upper bound $\Theta$ on the convergence rate becomes better with increasing $c$. Parts (4)-(5) indicate the same with a larger overlap of the decomposition.
\end{remark}

\begin{definition}
Let $(\mathcal{D}, \rho)$ and $(\widetilde{\mathcal{D}}, \tilde{\rho})$ be two pairs consisting of a decomposition and a subordinate partition of unity. If $\forall M_{i} \in \mathcal{D}$, $\exists \{ \widetilde{M}_{i_{1}}, \dots, \widetilde{M}_{i_{k(i)}} \} \subseteq \widetilde{\mathcal{D}}$ such that
\[
M_{i} = \bigcup_{j=1}^{k(i)} \widetilde{M}_{i_{j}} \qquad \text{and} \qquad \mathrm{supp} \rho_{i} \subseteq \bigcup_{j=1}^{k(i)} \mathrm{supp} \tilde{\rho}_{i_{j}},
\]
then we say $(\widetilde{\mathcal{D}}, \tilde{\rho})$ is a \emph{refinement} of $(\mathcal{D}, \rho)$. If additionally, $\forall i$, $M_{i} \ne \widetilde{M}_{i_{j}}$ for $1 \le j \le k(i)$, then we say $(\widetilde{\mathcal{D}}, \tilde{\rho})$ is a \emph{strict refinement} of $(\mathcal{D}, \rho)$.
\end{definition}

\begin{theorem}\label{thm_refine}
Suppose $(\widetilde{\mathcal{D}}, \tilde{\rho})$ is a refinement of $(\mathcal{D}, \rho)$. Then:
\begin{enumerate}[(1)]
\item $\Theta (c, \mathcal{D}, \rho) \le \Theta (c, \widetilde{\mathcal{D}}, \tilde{\rho})$.

\item If the refinement is strict and $\Theta (c, \mathcal{D}, \rho) < 1$, then $\Theta (c, \mathcal{D}, \rho) < \Theta (c, \widetilde{\mathcal{D}}, \tilde{\rho})$.
\end{enumerate}
\end{theorem}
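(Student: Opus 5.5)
The plan is to compare the functions $\theta_{i}$ of \eqref{eqn_bound_function} for $(\mathcal{D},\rho)$ with the analogous functions $\tilde{\theta}_{k}$ for $(\widetilde{\mathcal{D}},\tilde{\rho})$, using the maximum principle on the smaller subdomains $\widetilde{M}_{i_{j}} \subseteq M_{i}$.

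\textbf{Part (1).} Fix $i$ and $j$, and set $\widetilde{M} := \widetilde{M}_{i_{j}} \subseteq M_{i}$, with $\tilde{\theta}$ the corresponding Perron solution.

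First, I would record that $0 \le \theta_{i} \le 1$ on $M_{i}$, by the weak maximum principle (Corollary \ref{cor_maximum}), since $c \ge 0$ and the boundary data lie in $[0,1]$.

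Next, I would check the boundary data on $\partial \widetilde{M}$. On $\overline{\widetilde{\gamma}}$ we have $\tilde{\theta} = 1 \ge \theta_{i}$. The remaining part is $\partial\widetilde{M} \setminus \overline{\widetilde{\gamma}}$. A point $x$ there has a neighbourhood in $\widetilde{M}$ that is also a neighbourhood of $x$ in $M$, because near $x$ the boundary of $\widetilde{M}$ lies in $\partial M$. Hence, by Lemma \ref{lem_subdomain_interior}, $x$ lies in $M_{i} \setminus \overline{\gamma_{i}}$, and in fact $x \in \partial M_{i} \setminus \overline{\gamma_{i}}$. There $\theta_{i} = 0 = \tilde{\theta}$, and $\theta_{i}$ is continuous up to such points.

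With the boundary inequality in hand, I would apply the comparison principle for Perron solutions (from $\S$\ref{subsec_Perron}) on $\widetilde{M}$. This gives $\theta_{i} \le \tilde{\theta}$ on $\widetilde{M}$, since $\theta_{i}|_{\widetilde{M}}$ is a solution whose boundary values are dominated by those of $\tilde{\theta}$.

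Finally, take $x \in \mathrm{supp}\,\rho_{i}$. By the definition of refinement, $x \in \mathrm{supp}\,\tilde{\rho}_{i_{j}}$ for some $j$. Then $\theta_{i}(x) \le \tilde{\theta}_{i_{j}}(x) \le \widetilde{\Theta}_{i_{j}} \le \Theta(c,\widetilde{\mathcal{D}},\tilde{\rho})$. Taking the supremum over $x$ and the maximum over $i$ proves (1).

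\textbf{Part (2).} Strictness will come from upgrading $\theta_{i} \le \tilde{\theta}$ to a uniform gap $\tilde{\theta} - \theta_{i} \ge \delta > 0$ on $\mathrm{supp}\,\tilde{\rho}_{i_{j}}$.

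Since $\Theta(c,\mathcal{D},\rho) < 1$, $\theta_{i}$ is not identically $1$. By the strong maximum principle, $\theta_{i} < 1$ on the interior $M_{i} \setminus \partial M_{i}$.

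Because $\widetilde{M} \subsetneq M_{i}$ and both are connected subdomains, $\partial \widetilde{M}$ must contain a point $y$ in the interior of $M_{i}$. Otherwise $\widetilde{M} \setminus \partial M_{i}$ would be open and closed in the connected set $M_{i} \setminus \partial M_{i}$, forcing $\widetilde{M} = M_{i}$. Such a $y$ lies in $\widetilde{\gamma}$, and there $w := \tilde{\theta} - \theta_{i}$ equals $1 - \theta_{i}(y) > 0$, with $\theta_{i}$ continuous near $y$.

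The function $w$ satisfies $w \ge 0$ and $\mathfrak{L}w = 0$ in the interior of $\widetilde{M}$. If $w$ vanished at an interior point, the strong maximum principle (for $c \ge 0$ and a nonpositive interior minimum) would make $w \equiv 0$ on the connected interior. That contradicts the positive boundary value near $y$, provided $\tilde{\theta}$ attains its boundary value $1$ at $y$, which holds by Lipschitz regularity of $\partial\widetilde{M}$ and Lemma \ref{lem_perron_lipschitz} near points of $\widetilde{\gamma}$. So $w > 0$ in the interior of $\widetilde{M}$.

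Since $\mathrm{supp}\,\tilde{\rho}_{i_{j}}$ is a compact subset of that interior, $w \ge \delta_{j} > 0$ there. Let $\delta := \min_{j} \delta_{j}$. Repeating the covering argument from (1) gives $\Theta_{i} \le \Theta(c,\widetilde{\mathcal{D}},\tilde{\rho}) - \delta$ for each $i$. Taking the $i$ that attains $\Theta(c,\mathcal{D},\rho)$ yields the strict inequality.

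\textbf{Main obstacle.} The step I expect to be delicate is the boundary bookkeeping for Perron solutions with discontinuous data $1/0$ at $\partial\overline{\gamma_{i}}$. The comparison must only use points where the data are genuinely attained. I would handle this by comparing against continuous approximating data from above and below, or by using the Perron comparison lemma directly with the upper and lower semicontinuous envelopes of the boundary data.
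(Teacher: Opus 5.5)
\textbf{Gap in Part (2).} You claim that $\mathrm{supp}\,\tilde\rho_{i_j}$ is a compact subset of the interior of $\widetilde M := \widetilde M_{i_j}$. This is false in general. Assumption \ref{asp_decomposition} only gives $\mathrm{supp}\,\tilde\rho_{i_j}\subset\widetilde M\setminus\overline{\widetilde\gamma}$. That set is the point-set interior (Lemma \ref{lem_subdomain_interior}), and it may contain points of $\partial\widetilde M\cap\partial M$. Indeed, when $\partial M\neq\emptyset$, some supports must meet $\partial M$, because the $U_k$ cover $M$. At such a point $p$ you have $p\in\partial M_i\setminus\overline{\gamma_i}$, by the inclusion you proved in Part (1). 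Hence $\theta_i(p)=\tilde\theta(p)=0$ and $w(p)=0$. So no $\delta_j>0$ exists, and your argument does not justify $\Theta_i\le\Theta(c,\widetilde{\mathcal D},\tilde\rho)-\delta$.

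\textbf{Repair.} The uniform gap is unnecessary; argue at a single point, as the paper does.
Choose $i$ and $x\in\mathrm{supp}\,\rho_i$ with $\theta_i(x)=\Theta(c,\mathcal D,\rho)$. The maximum is attained because $\theta_i$ is continuous on $M_i\setminus\overline{\gamma_i}\supset\mathrm{supp}\,\rho_i$ by Lemma \ref{lem_bound_perron}.
Since $\Theta(c,\mathcal D,\rho)>0$ (Theorem \ref{thm_bound}) while $\theta_i=0$ on $\mathrm{supp}\,\rho_i\cap\partial M$, you get $x\notin\partial M$.
Pick $j$ with $x\in\mathrm{supp}\,\tilde\rho_{i_j}$. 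Strictness gives $\widetilde M_{i_j}\subsetneqq M_i$, and \eqref{lem_bound_function_1} applied to $\widetilde{\mathcal D}$ gives $x\in\widetilde M_{i_j}\setminus\partial\widetilde M_{i_j}$.
Your strong-maximum-principle argument then yields $w(x)>0$, that is, $\Theta(c,\mathcal D,\rho)=\theta_i(x)<\tilde\theta_{i_j}(x)\le\Theta(c,\widetilde{\mathcal D},\tilde\rho)$.

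\textbf{Part (1).} This is otherwise the paper's argument, but the comparison step cannot rest on (4) of Lemma \ref{lem_perron}. The function $\theta_i|_{\widetilde M}$ is not known to be the Perron solution on $\widetilde M$: it may be discontinuous at points of $\overline{\gamma_i}\cap\partial M$, and some $\widetilde M_{i_j}$ must contain such points because $M_i=\bigcup_j\widetilde M_{i_j}$. So it cannot play the role of $u_{\psi_1}$ there.

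\textbf{Fix for Part (1).} No approximating data or semicontinuous envelopes are needed. If $v$ is a subfunction defining $\theta_i$ on $M_i$, then $v|_{\widetilde M}$ is a subfunction defining $\tilde\theta$. Indeed, $v\le\theta_i\le 1$ on $\overline{\widetilde\gamma}$, and $v\le 0$ on $\partial\widetilde M\setminus\overline{\widetilde\gamma}\subseteq\partial M_i\setminus\overline{\gamma_i}$. Taking the supremum over $v$ gives $\theta_i|_{\widetilde M}\le\tilde\theta$ directly. This is exactly how the paper proceeds, via the proof of (4) of Theorem \ref{thm_comparison}.
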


\begin{remark}
Theorem \ref{thm_refine} shows that refining the decomposition deteriorates the upper bound $\Theta$ on the convergence rate. This suggests that, to ensure a fast convergence, one should keep the number of subdomains small. In an extremal situation of a single subdomain, there is effectively no decomposition, and the solution is achieved in one iteration.
\end{remark}

\subsection{High Regularity}
We claim convergence in derivative norms under extra assumptions. For the definitions of $C^{k,\alpha}$- and $W^{k,p}$-norms on manifolds, see $\S$\ref{subsec_function}.
\begin{theorem}\label{thm_high_global}
Let $u^{0} \in C^{0} (M)$ satisfying $u^{0}|_{\partial M} = \varphi$ be an initial guess of Algorithm \ref{alg_continuous_parallel}. Suppose further $\rho_{i} \in C^{k, \alpha} (M)$ for all $i$, where $k \ge 0$ and $0 \le \alpha \le 1$. Then, for each $n>0$, we have $u^{n} -u \in C^{k, \alpha} (M)$ and
\begin{equation}\label{thm_high_global_1}
\| u^{n} -u \|_{C^{k, \alpha} (M)} \le C \max_{1 \le i \le m} \{ \| u^{n}_{i} - u \|_{C^{0}(M_{i})} \},
\end{equation}
where $C$ is a constant independent of $n$, $u$, and $u^{0}$.
\end{theorem}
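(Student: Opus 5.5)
The plan starts from the identity $u^{n}-u=\sum_{i=1}^{m}\rho_{i}(u^{n}_{i}-u)$, which holds because $\sum_i\rho_i=1$. Each summand is localized: $\rho_{i}(u^{n}_{i}-u)$ is supported in $\mathrm{supp}\,\rho_{i}$, a compact subset of $M_{i}\setminus\overline{\gamma_{i}}$, and it is extended by $0$ outside. Set $w_{i}:=u^{n}_{i}-u$. By Lemma \ref{lem_algorithm_wellpose} and the definition of $u$, $w_i$ is continuous on $M_{i}$, satisfies $\mathfrak{L}w_{i}=0$ in $M_{i}\setminus\partial M_{i}$, and vanishes on $\partial M_{i}\cap\partial M$. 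So it suffices to prove
\[
\|\rho_{i}w_{i}\|_{C^{k,\alpha}(M)}\le C_{i}\|w_{i}\|_{C^{0}(M_{i})}
\]
with $C_{i}$ depending only on $\rho_{i}$, $M_{i}$ and $\mathfrak{L}$, not on $n$, $u$ or $u^{0}$. Summing over $i$ then gives \eqref{thm_high_global_1} with $C=\sum_i C_i$, using $\|w_{i}\|\le\max_j\|w_j\|$.

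Since $\rho_{i}\in C^{k,\alpha}$, the product estimate in $C^{k,\alpha}$ (valid on compact sets via a finite atlas, cf. $\S$\ref{subsec_function}) reduces this to bounding $\|w_{i}\|_{C^{k,\alpha}}$ on a neighborhood of $\mathrm{supp}\,\rho_{i}$. A cleaner route is to bound $\|w_i\|_{C^{k+1}}$ there, which dominates $C^{k,\alpha}$. I would cover $\mathrm{supp}\,\rho_{i}$ by finitely many coordinate balls of two kinds, chosen once and for all independent of $n$. Interior points $x\in\mathrm{supp}\,\rho_{i}\setminus\partial M$ get a ball $B_{2r}(x)\subset M_{i}\setminus\partial M_i$. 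Points $x\in\mathrm{supp}\,\rho_{i}\cap\partial M$ get a half-ball chart $B^{+}_{2r}(x)$ contained in $M_{i}$ and disjoint from $\overline{\gamma_{i}}$; this is possible because $\mathrm{supp}\,\rho_i$ is compact and avoids the closed set $\overline{\gamma_{i}}$. Compactness then gives finitely many such balls.

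On interior balls, $w_{i}$ is smooth by hypoellipticity (as in Lemma \ref{lem_wellpose}). The interior Schauder estimates, or the interior $C^{l}$ estimates for solutions of $\mathfrak{L}w=0$ with smooth coefficients, give
\[
\|w_{i}\|_{C^{k+1}(B_{r})}\le C\|w_{i}\|_{C^{0}(B_{2r})}.
\]
On boundary half-balls, $w_{i}$ solves $\mathfrak{L}w_{i}=0$ with zero Dirichlet data on the flat part $B_{2r}\cap\partial M$, where $\partial M$ is smooth. The main obstacle is that $w_{i}$ is a priori only continuous up to $\partial M$, so the boundary Schauder estimates cannot be applied directly. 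I would handle this first by showing $w_{i}$ is smooth up to the flat boundary portion. One way is to solve the Dirichlet problem on a smooth domain $D$ with $B^{+}_{3r/2}\subset D\subset B^{+}_{2r}$, whose smooth part of the boundary is flat and carries the data $0$; local boundary regularity for continuous solutions with smooth boundary data then gives smoothness near the flat part. The other way is to mollify the data and pass to the limit using the maximum principle \ref{cor_maximum}. Either way, the local boundary Schauder estimate (e.g.\ Gilbarg–Trudinger Lemma 6.5 and its higher-order version, combined with interpolation to replace the $C^{2,\beta}$ norm of $w_i$ on the right by its $C^{0}$ norm) yields
\[
\|w_{i}\|_{C^{k+1}(B^{+}_{r})}\le C\|w_{i}\|_{C^{0}(B^{+}_{2r})}.
\]

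Combining the finitely many local bounds gives $\|w_{i}\|_{C^{k+1}(V_i)}\le C\|w_{i}\|_{C^{0}(M_{i})}$ on a neighborhood $V_i$ of $\mathrm{supp}\,\rho_i$. Multiplying by $\rho_{i}$ and extending by zero then shows that $\rho_{i}w_{i}\in C^{k,\alpha}(M)$, with the desired bound. Since the charts, the radii and the operator are fixed, all constants are independent of $n$, $u$ and $u^{0}$, which completes the plan.
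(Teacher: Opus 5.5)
Your proposal is correct and follows essentially the same route as the paper. Both localize with $\rho_i$, apply interior estimates on $\mathrm{supp}\,\rho_i\setminus\partial M\subset M_i\setminus\partial M_i$, and apply flat-boundary Schauder estimates where $w_i=u^n_i-u$ vanishes on $\partial M_i\setminus\overline{\gamma_i}\subset\partial M$, after first establishing smoothness up to that boundary portion (the paper simply cites the end of \S6.4 of Gilbarg--Trudinger for this). The one cosmetic difference is that the paper makes your ``higher-order version'' explicit: it applies Gilbarg--Trudinger Lemma~6.4 (which already has the $C^0$ norm of the solution on the right, so no interpolation is needed) to the tangential derivatives $\partial_{x_r}w_i$, $r\le d-1$, and recovers the pure normal derivative $\partial_{x_d}^{k}w_i$ from the equation, inducting on $k$.
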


Note that the high regularity of $u$ is even not assumed in Theorem \ref{thm_high_global}. Thus we obtain the following corollary.
\begin{corollary}\label{cor_high_global}
In Theorem \ref{thm_high_global}, assume further $u \in C^{k,\alpha} (M)$, then $u^{n} \in C^{k,\alpha} (M)$ for each $n>0$.
\end{corollary}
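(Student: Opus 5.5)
The statement to prove is Corollary \ref{cor_high_global}. It follows from Theorem \ref{thm_high_global} by a one-line decomposition. Fix $n>0$ and write
\[
u^{n} = (u^{n}-u) + u .
\]
By Theorem \ref{thm_high_global}, whose hypotheses include $\rho_{i} \in C^{k,\alpha}(M)$ for all $i$ and $u^{0}\in C^{0}(M)$ with $u^{0}|_{\partial M}=\varphi$, the difference $u^{n}-u$ lies in $C^{k,\alpha}(M)$. The right-hand side of \eqref{thm_high_global_1} is finite, because each $u^{n}_{i}$ and $u$ are continuous on the compact set $M_{i}$ by Lemma \ref{lem_algorithm_wellpose}. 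By the extra assumption, $u \in C^{k,\alpha}(M)$.

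The space $C^{k,\alpha}(M)$ is a vector space, defined chartwise in $\S$\ref{subsec_function} and stable under sums. Hence $u^{n} = (u^{n}-u)+u \in C^{k,\alpha}(M)$.

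No step is a genuine obstacle. The only point worth stressing is the one the paper already flags: Theorem \ref{thm_high_global} does not itself require any regularity of $u$. The regularity of $u^{n}$ therefore comes entirely from splitting off $u$ and adding back its assumed $C^{k,\alpha}$ regularity.
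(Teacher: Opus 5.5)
Your proof is correct and matches the paper's (implicit) argument: Theorem \ref{thm_high_global} gives $u^{n}-u\in C^{k,\alpha}(M)$ without any regularity assumption on $u$, so adding back $u\in C^{k,\alpha}(M)$ yields $u^{n}\in C^{k,\alpha}(M)$. Your remark that the right-hand side of \eqref{thm_high_global_1} is finite is harmless but not needed, since membership $u^{n}-u\in C^{k,\alpha}(M)$ is already part of the conclusion of Theorem \ref{thm_high_global}.
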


\begin{theorem}\label{thm_high_local}
Let $u^{0} \in C^{0} (M)$ satisfying $u^{0}|_{\partial M} = \varphi$ be an initial guess of Algorithm \ref{alg_continuous_parallel}. Then the following hold:
\begin{enumerate}[(1)]
\item If $\rho_{i} \in C^{0,1} (M)$ for all $i$, then for each $n>1$, we have $u^{n}_{i} -u \in H^{1} (M_{i})$ for all $i$, and
\begin{equation}\label{thm_high_local_1}
\max_{1 \le i \le m} \{ \| u^{n}_{i} - u \|_{H^{1}(M_{i})} \} \le C \max_{1 \le i \le m} \{ \| u^{n-1}_{i} - u \|_{C^{0}(M_{i})} \},
\end{equation}
where $C$ is a constant independent of $n$, $u$ and $u^{0}$.

\item Suppose further $\partial M_{j}$ is $C^{k,\alpha}$ for some $j$, where $k \ge 2$ and $0 < \alpha \le 1$. If $\rho_{i} \in C^{k,\alpha} (M)$ for all $i$, then for each $n>1$, we have $u^{n}_{j} -u \in C^{k,\alpha} (M_{j})$ and
\begin{equation}\label{thm_high_local_2}
\| u^{n}_{j} - u \|_{C^{k,\alpha}(M_{j})} \le C_{j} \max_{1 \le i \le m} \{ \| u^{n-1}_{i} - u \|_{C^{0}(M_{i})} \},
\end{equation}
where $C_{j}$ is a constant independent of $n$, $u$ and $u^{0}$.
\end{enumerate}
\end{theorem}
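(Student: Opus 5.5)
The plan is to work with the errors $e^{n}_{i} := u^{n}_{i} - u$ on $M_{i}$ and $e^{n} := u^{n} - u = \sum_{j} \rho_{j} e^{n}_{j}$ on $M$. Since $\sum_{j} \rho_{j} = 1$, for each $n \ge 1$ the error $e^{n}_{i}$ is the (Perron) solution of
\[
\mathfrak{L} e^{n}_{i} = 0 \ \text{in } M_{i} \setminus \partial M_{i}, \qquad e^{n}_{i} = e^{n-1} \ \text{on } \partial M_{i}.
\]
Here $e^{n-1} = 0$ on $\partial M$, by Lemma \ref{lem_algorithm_wellpose}. The whole argument rests on one observation: for $n>1$ the boundary datum $e^{n-1} = \sum_{j} \rho_{j} e^{n-1}_{j}$ is a combination of solutions of the homogeneous equation, cut off by $\rho_{j}$. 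Moreover $\mathrm{supp} \rho_{j}$ stays away from $\overline{\gamma_{j}}$. So $e^{n-1}$ is as regular as $\rho_{j}$ permits, with norms controlled by the sup norms $\| e^{n-1}_{j} \|_{C^{0}(M_{j})}$. This is why $n>1$ is required.

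\textbf{Step 1 (regularity of the datum $e^{n-1}$).} For each $j$, fix a compact neighborhood $K_{j}$ of $\mathrm{supp} \rho_{j}$ inside $M_{j} \setminus \overline{\gamma_{j}}$. Every point of $K_{j}$ is either an interior point of $M$ or lies on $\partial M$, where $e^{n-1}_{j} = 0$ and $\partial M$ is smooth. The operator $\mathfrak{L}$ has smooth coefficients, so I would cover $K_{j}$ by finitely many charts and apply local estimates in each. For part (1), interior and boundary Caccioppoli inequalities give
\[
\| e^{n-1}_{j} \|_{H^{1}(K_{j})} \le C \| e^{n-1}_{j} \|_{C^{0}(M_{j})}.
\]
For part (2), interior and boundary Schauder estimates with zero Dirichlet data (Gilbarg--Trudinger, Thms.~6.2, 6.6, 6.17, 6.19) give $\| e^{n-1}_{j} \|_{C^{k,\alpha}(K_{j})} \le C \| e^{n-1}_{j} \|_{C^{0}}$. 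Multiplying by $\rho_{j}$ and summing, we get $e^{n-1} \in H^{1}(M)$, resp.\ $C^{k,\alpha}(M)$, with
\[
\| e^{n-1} \|_{H^{1}(M)} \ (\text{resp. } \| e^{n-1} \|_{C^{k,\alpha}(M)}) \le C \max_{j} \| e^{n-1}_{j} \|_{C^{0}(M_{j})}.
\]
For part (1) only Lipschitz $\rho_{j}$ is needed, since $\nabla(\rho_{j} e_{j}) = e_{j} \nabla \rho_{j} + \rho_{j} \nabla e_{j}$ lies in $L^{2}$.

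\textbf{Step 2 (part (1)).} I would set $v := e^{n}_{i} - e^{n-1}|_{M_{i}}$. The aim is $v \in H^{1}_{0}(M_{i})$ together with the weak identity
\[
B(v, \phi) = -B(e^{n-1}, \phi) \quad \text{for all } \phi \in H^{1}_{0}(M_{i}),
\]
where $B$ is the bilinear form of $\mathfrak{L}$. To obtain this, I would construct the variational solution $w = e^{n-1} + v$ with $v \in H^{1}_{0}$. Existence and uniqueness of $w$ follow from Fredholm theory together with uniqueness via the weak maximum principle, using $c \ge 0$ (Gilbarg--Trudinger Thm.~8.3). Next I would identify $w$ with the Perron solution $e^{n}_{i}$. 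Because the datum lies in $H^{1} \cap C^{0}$ and $\partial M_{i}$ is Lipschitz, hence regular, $w$ is continuous up to the boundary with boundary values $e^{n-1}$. One can see this by approximating the datum by smooth functions and using the $C^{0}$ maximum principle. Uniqueness (Lemma \ref{lem_algorithm_wellpose}) then gives $w = e^{n}_{i}$.

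Then I would take $\phi = v$ in the weak identity and use G\aa rding's inequality:
\[
\| v \|_{H^{1}}^{2} \le C \bigl( \| e^{n-1} \|_{H^{1}(M_{i})} \| v \|_{H^{1}} + \| v \|_{L^{2}}^{2} \bigr).
\]
The maximum principle gives $\| e^{n}_{i} \|_{C^{0}} \le \| e^{n-1} \|_{C^{0}}$, which bounds $\| v \|_{L^{2}}$ by $C \max_{j} \| e^{n-1}_{j} \|_{C^{0}}$. Combining this with Step 1 yields \eqref{thm_high_local_1}.

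\textbf{Step 3 (part (2)).} Here $\partial M_{j}$ is $C^{k,\alpha}$ with $k \ge 2$, and by Step 1 the datum $e^{n-1}|_{\partial M_{j}}$ is the trace of a $C^{k,\alpha}(M_{j})$ function. The global Schauder theory (Gilbarg--Trudinger Thm.~6.14 for existence, Thms.~6.6 and 6.19 for estimates), transplanted to $M_{j}$ through finitely many charts, gives a $C^{k,\alpha}$ solution. By uniqueness this solution coincides with the Perron solution $e^{n}_{j}$. The estimate is
\[
\| e^{n}_{j} \|_{C^{k,\alpha}(M_{j})} \le C \bigl( \| e^{n}_{j} \|_{C^{0}} + \| e^{n-1} \|_{C^{k,\alpha}(M_{j})} \bigr).
\]
The first term is controlled by the maximum principle and the second by Step 1, which gives \eqref{thm_high_local_2}.

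I expect the main obstacle to be the boundary part of Step 1. There one must check that near points of $K_{j} \cap \partial M$ (possibly close to where $\partial M_{j}$ meets $\partial M$) the local boundary estimates with zero data apply. This holds because $K_{j}$ avoids $\overline{\gamma_{j}}$, so locally $M_{j}$ coincides with $M$ and its boundary is the smooth $\partial M$. A second point needing care is the identification of the Perron solution with the variational solution in Step 2.
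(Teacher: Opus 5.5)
Your proposal is correct and follows the paper's proof essentially step for step. Step 1 is exactly what Theorem \ref{thm_high_global} supplies: the paper invokes it to get a $C^{0,1}$, hence $W^{1,\infty}$, bound on the datum $u^{n-1}-u$, whereas you re-derive an $H^{1}$ bound via Caccioppoli. Step 2 is the same $H^{1}_{0}$ energy estimate for $u^{n}_{i}-u^{n-1}$ combined with the maximum principle, and Step 3 is the same appeal to Gilbarg--Trudinger's Theorem 6.19 and Problem 6.2.

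For identifying the variational solution with the Perron solution, the paper cites the boundary-continuity result for weak solutions on domains satisfying an exterior cone condition (Gilbarg--Trudinger, Corollary 8.28), together with the uniqueness in Lemma \ref{lem_algorithm_wellpose}. Your smooth-approximation remark does not close this step on its own: even with smooth data, continuity up to a Lipschitz boundary needs that same result, so cite it directly.
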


\begin{corollary}\label{cor_high_local}
In Theorem \ref{thm_high_local}, assume further $u \in H^{1} (M)$ in (1) or $u|_{M_{j}} \in C^{k,\alpha} (M_{j})$ in (2), then $u^{n}_{i} \in H^{1} (M_{i})$ or $u_{j}^{n} \in C^{k,\alpha} (M_{j})$, respectively, for each $n>1$.
\end{corollary}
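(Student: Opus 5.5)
The plan is to deduce the corollary directly from Theorem \ref{thm_high_local} by writing each $u^{n}_{i}$ as $u$ plus an error term. Both parts rest on the decomposition
\[
u^{n}_{i} = (u^{n}_{i} - u) + u|_{M_{i}},
\]
valid on $M_{i}$ for each $n>1$, together with the fact that the function spaces involved are vector spaces.

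For part (1), assume $\rho_{i} \in C^{0,1} (M)$ for all $i$ and $u \in H^{1} (M)$.
\begin{enumerate}[(a)]
\item Part (1) of Theorem \ref{thm_high_local} gives $u^{n}_{i} - u \in H^{1} (M_{i})$ for every $n>1$ and every $i$.
\item Next I check that $u|_{M_{i}} \in H^{1} (M_{i})$. Restricting to a subdomain preserves square integrability of the function and of its weak gradient, since these are local quantities. In a chart, the weak derivatives of $u$ on the open set $M_{i} \setminus \partial M_{i}$ are just the restrictions of those of $u$. The norm on $H^{1}(M_{i})$ in $\S$\ref{subsec_function} is defined through charts and a partition of unity, so $\| u|_{M_{i}} \|_{H^{1} (M_{i})} \le C \| u \|_{H^{1} (M)}$. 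Here the Lipschitz property of $\partial M_{i}$ ensures that $H^{1} (M_{i})$ is the standard space, and nothing beyond the restriction is needed.
\item Adding the two pieces gives $u^{n}_{i} \in H^{1} (M_{i})$.
\end{enumerate}

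For part (2), assume $\partial M_{j}$ is $C^{k,\alpha}$, $\rho_{i} \in C^{k,\alpha} (M)$ for all $i$, and $u|_{M_{j}} \in C^{k,\alpha} (M_{j})$.
\begin{enumerate}[(a)]
\item Part (2) of Theorem \ref{thm_high_local} gives $u^{n}_{j} - u \in C^{k,\alpha} (M_{j})$ for $n>1$.
\item The hypothesis gives $u|_{M_{j}} \in C^{k,\alpha} (M_{j})$, and the sum of two $C^{k,\alpha}$ functions is again $C^{k,\alpha}$.
\item Hence $u^{n}_{j} \in C^{k,\alpha} (M_{j})$.
\end{enumerate}

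The only step that needs any care is the restriction $H^{1} (M) \to H^{1} (M_{i})$ in part (1). I expect it to follow immediately from the chart-based definitions in $\S$\ref{subsec_function}, so there is no real obstacle.
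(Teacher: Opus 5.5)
Your proposal is correct and matches the argument the paper intends. The paper gives no separate proof of this corollary: it follows from Theorem \ref{thm_high_local} via $u^{n}_{i} = (u^{n}_{i} - u) + u|_{M_{i}}$, in the same way Corollary \ref{cor_high_global} follows from Theorem \ref{thm_high_global}, because the theorem itself needs no regularity of $u$. Your restriction step is even simpler than you suggest. The $H^{1}$-norm in $\S$\ref{subsec_function} is defined directly by the $L^{2}$-norms of $u$ and $\mathrm{D}u$, so restricting to $M_{i}$ gives $\| u|_{M_{i}} \|_{H^{1}(M_{i})} \le \| u \|_{H^{1}(M)}$, and the Lipschitz boundary plays no role here.
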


\begin{remark}
By Rademacher's theorem (see \cite[\S4.2.3]{evans_gariepy}), in Theorem \ref{thm_high_global}, $u^{n} - u \in W^{k+1,\infty} (M)$ when $u^{n} - u \in C^{k,1} (M)$. Thus $\| u^{n} - u \|_{W^{k+1,p} (M)}$ is dominated by $\| u^{n} -u \|_{C^{k, 1} (M)}$ for $1 \le p \le \infty$. So $u^{n} \in W^{k+1,p} (M)$ if $u \in W^{k+1,p} (M)$ and $\rho_{i} \in C^{k,1} (M)$ for all $i$. Similar results hold in Theorem \ref{thm_high_local} and Corollary \ref{cor_high_local}.
\end{remark}

\section{Geometric Aspects of Domain Decomposition}\label{sec_geometry}
For numerical computation in a Euclidean domain $\Omega \subset \mathbb{R}^{d}$, one typically approximates $\Omega$ by a $d$-dimensional polyhedron and, when DDMs are employed, decomposes it into a collection of subpolyhedra. Unfortunately, this simple procedure cannot be carried over to a general manifold. Thus one needs to find a suitable definition of a subdomain in a manifold. Moreover, some basic notions related to domain decompositions, such as interior and boundary, admit multiple and inequivalent definitions in the manifold setting. The purpose of this section is to address these issues, to clarify the relevant concepts, and to prove some topological facts frequently used throughout this work. In particular, we prove Proposition \ref{prop_winding} which will be used in the proof of Theorem \ref{thm_rate_boundary}.

\subsection{Interior and Boundary}\label{subsec_boundary}
We need to clarify the terminology of ``interior" and ``boundary", as both have two distinct meanings: one in point-set topology and one in manifold theory. All are used in this paper.

Let's first consider point-set topology. Suppose $X$ is a topological space and $A \subseteq X$. We say $a \in X$ is an interior point of $A$ if $A$ is a neighborhood of $a$. We say $a \in X$ is a boundary point of $A$ if $U \cap A \ne \emptyset$ and $U \setminus A \ne \emptyset$ for all neighborhood $U$ of $a$. (See \cite[p.~44-46]{Kelley} for more details.) The set of all interior (resp.~boundary) points of $A$ is called the interior (resp.~boundary) of $A$. Note that these concepts are relative in that they depend on the ambient space $X$ containing $A$. For example, if $X=A = [0, +\infty)$, then the interior of $A$ is $A$ itself, and the boundary of $A$ is empty. On the other hand, if $X= \mathbb{R}$ and $A = [0, +\infty)$, then the interior of $A$ is $(0, +\infty)$, and the boundary of $A$ is $\{ 0 \}$.

Now we turn to manifolds. Suppose $M$ is a $d$-dimensional topological manifold (see e.g.,~\cite[p.~231~\&~252]{Hatcher}). By definition, $M$ consists of only two types of points: interior points and boundary points. A point $x \in M$ is called an interior point of $M$ if there is a neighborhood $U$ of $x$ such that $U$ is homeomorphic to $\mathbb{R}^{d}$. On the other hand, $x$ is called a boundary point if there is a neighborhood $U$ of $x$ homeomorphic to the closed half Euclidean space
\[
\mathbb{R}^{d}_{+} = \{ (\xi_{1}, \dots \xi_{d}) \in \mathbb{R}^{d} \mid \xi_{d} \ge 0 \},
\]
and under this homeomorphism, $x$ corresponds to $0 \in \mathbb{R}^{d}_{+}$. By the Invariance of Domain Theorem (\cite[Theorem~2B.3]{Hatcher}), a basic but nontrivial result in algebraic topology, a point $x \in M$ cannot be both an interior point and a boundary point. The subset consisting of all boundary points of $M$, denoted by $\partial M$, is called the boundary of $M$. Its complement $M \setminus \partial M$ is called the interior of $M$. The interior $M \setminus \partial M$ is always nonempty and is a $d$-dimensional manifold without boundary. However, $\partial M$ may be empty. If $\partial M \ne \emptyset$, then it is a $(d-1)$-dimensional manifold without boundary. These notions of interior and boundary in manifold theory differ remarkably from those in point-set topology because these concepts are absolute. More precisely, if we embed $M$ into a larger topological space, then the boundary and interior of $M$ remain unchanged because their definitions are purely intrinsic and do not involve any ambient space.

\emph{In this paper, unless otherwise stated, the terms ``interior" and ``boundary" are understood in the sense of manifold theory. The notation $\partial M$ is always understood as the boundary of $M$ in the sense of manifold theory.}

This convention is convenient for numerical computation. Boundary values are typically assigned to subdomains (see e.g.,~\eqref{alg_continuous_parallel_1}), and in such contexts the boundary is in the sense of manifold theory, not in the point-set topology.

\subsection{Subdomains}\label{subsec_subdomain}
Before addressing subdomains, we recall a method for extending a manifold with boundary to a manifold without boundary.

Suppose $M$ is a $d$-dimensional manifold with boundary $\partial M$. We glue $M$ and $\partial M \times [0,1)$ by identifying $x \in \partial M \subset M$ with $(x,0) \in \partial M \times [0,1)$, i.e.,~take a topological quotient space of the disjoint union of $M$ and $\partial M \times [0,1)$. This yields a $d$-dimensional manifold $\widehat{M}$ containing $M$. This $\widehat{M}$ has no boundary and is noncompact even if $M$ is compact. The original topology of $M$ equals that inherited from $\widehat{M}$. (See Fig.~\ref{fig_collar} for an illustration, in which the shadowed domain on the left is $M$, the larger domain on the right is $\widehat{M}$, the shadowed region in $\widehat{M}$ is $M$, the unshadowed region in $\widehat{M}$ is $\partial M \times (0,1)$, the dashed curve means $\widehat{M}$ has no boundary.)
\begin{figure}[htbp]
\centering
  \includegraphics[width=0.6\textwidth]{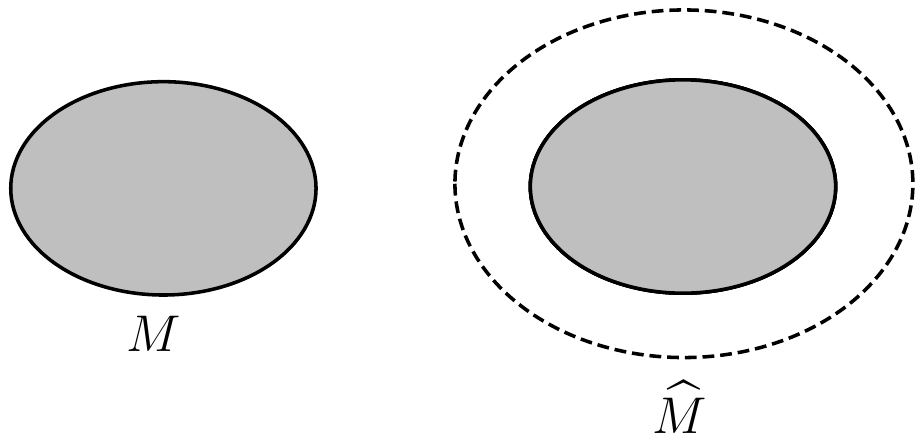}
  \caption{Extend $M$ to $\widehat{M}$.}
  \label{fig_collar}
\end{figure}

We define a subdomain $M_{i}$ of $M$ as a connected, compact, topologically embedded submanifold with boundary and of codimension $0$. More precisely, $M_{i}$ satisfies three conditions. First, $M_{i}$ with the topology inherited from $M$ is a compact topological manifold with boundary and with $\dim M_{i} = \dim M$. Second, $M_{i}$ is connected. (Since $M_{i}$ is a manifold, connectedness is equivalent to path-connectedness.) Third, $M_{i}$ is topologically embedded in the following sense: Suppose $x \in \partial M_{i}$. If $x \notin \partial M$, particularly $\partial M = \emptyset$, then there exists a neighborhood $U$ of $x$ in $M$ such that: (i) $U$ is homeomorphic to $\mathbb{R}^{d}$; (ii) under this homeomorphism, $x$ corresponds to $0$ and $U \cap M_{i}$ corresponds to $\mathbb{R}^{d}_{+}$. (Fig.~\ref{fig_subdomain} illustrates the situation: the large domain on the left is $M$; the subdomain filled with slanted lines is $M_{i}$; the region enclosed by the dashed circle is the neighborhood $U$ of $x$; and $\phi$ is a homeomorphism from $U$ to $\mathbb{R}^{d}$.)
\begin{figure}[htbp]
\centering
  \includegraphics[width=0.6\textwidth]{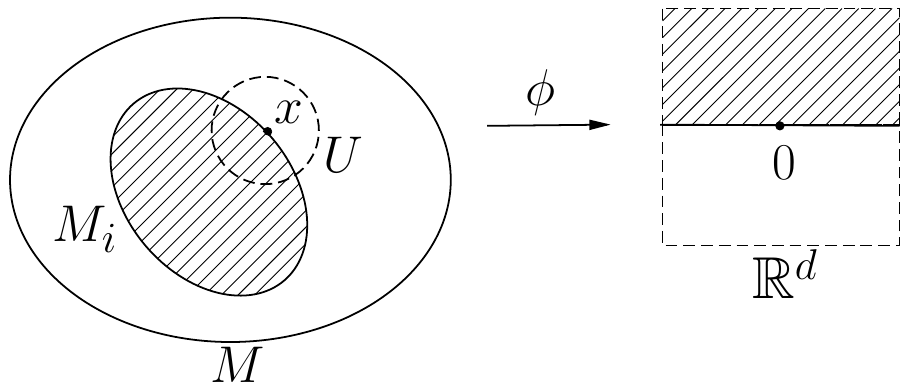}
  \caption{Subdomain $M_{i}$ in $M$.}
  \label{fig_subdomain}
\end{figure}
On the other hand, if $x \in \partial M$, then we require that a neighborhood $U$ of $x$ in $\widehat{M}$ satisfies (i) and (ii) above, where $\widehat{M}$ is the manifold constructed as above. (Fig.~\ref{fig_subdomain_2} illustrates the situation: $M$ is extended to $\widehat{M}$ (see also Fig.~\ref{fig_collar}); the largest domain enclosed by the dashed curve is $\widehat{M}$; the subdomain filled with slanted lines is $M_{i}$; the subdomain intermediate between $M_{i}$ and $\widehat{M}$ is $M$; the symbols $x$, $U$ and $\phi$ are used in the same sense as in Fig.~\ref{fig_subdomain}.)
\begin{figure}[htbp]
\centering
  \includegraphics[width=0.7\textwidth]{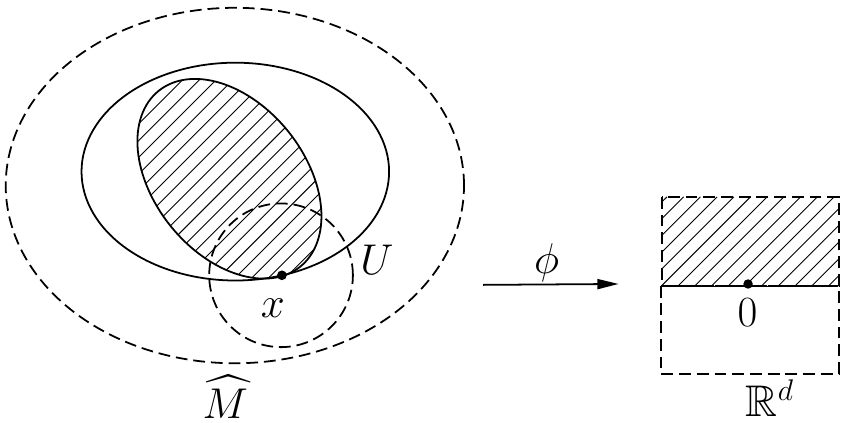}
  \caption{Subdomain $M_{i}$ in $\widehat{M}$.}
  \label{fig_subdomain_2}
\end{figure}

This definition of a subdomain may seem lengthy. However, it already covers a broad family of geometric objects. For example, if $\Omega$ is an open domain in $\mathbb{R}^{d}$, then any $d$-dimensional finite polyhedron in $\Omega$ is a subdomain of $\Omega$ in the above sense. Moreover, the subdomains used in the numerical experiments in \cite{qin_zhang_zhang,cao_qin,qin_wang_wang,jiang_qin_wang} are all subdomains of their respective manifolds in our present sense. On the other hand, this definition is sufficiently restrictive to exclude pathological boundary behaviors, such as the Alexander horned sphere (see \cite[Example~2B.2]{Hatcher}).

By the Invariance of Domain Theorem, $M_{i} \setminus \partial M_{i}$ is an open subset of $M$. Since $M_{i}$ is compact, so is $\partial M_{i}$. Therefore, as subspaces of $M$, the interior (resp.~boundary) of $M_{i}$ in the sense of point-set topology contains (resp.~is contained in) $M_{i} \setminus \partial M_{i}$ (resp.~$\partial M_{i}$). If $M_{i} \cap \partial M = \emptyset$, particularly $\partial M = \emptyset$, then these two types of interiors (resp.~boundary) coincide. On the other hand, if $M_{i} \cap \partial M \ne \emptyset$, then they may differ. Lemma \ref{lem_subdomain_interior} below, frequently used in this paper, addresses this difference.

By Assumption \ref{asp_decomposition}, the manifold $M$ is decomposed into a family of subdomains $M_{i}$ with $1 \le i \le m$, and $\gamma_{i} = \partial M_{i} \setminus \partial M$. Let $\overline{\gamma_{i}}$ denote the closure of $\gamma_{i}$ in $M$. If $M_{i} \cap \partial M = \emptyset$, particularly $\partial M = \emptyset$, then $\gamma_{i} = \overline{\gamma_{i}} = \partial M_{i}$. On the other hand, the situation is very complicated when $M_{i} \cap \partial M \ne \emptyset$. To illustrate this, we first examine some simple examples in dimension $2$.

\begin{example}
Consider the two configurations shown in Fig.~\ref{fig_boundary}, in which the large domain on the left and the large domain on the right are both $M$, with $M_{i} \cap \partial M \ne \emptyset$.

In the left panel, $M_{1}$ is the region bounded by two red curves, one blue curve, and three small circles; $\gamma_{1}$ is the union of the two red curves; the blue curve is $\partial M_{1} \setminus \overline{\gamma_{1}}$; and each small circle represents a point on $\partial M_{i}$ that belongs neither to $\gamma_{1}$ nor to $\partial M_{1} \setminus \overline{\gamma_{1}}$. The red curves and points represented by small circles constitute $\overline{\gamma_{1}}$.

In the right panel, $M_{2}$ is the region bounded by a red curve and a small circle; the red curve is $\gamma_{2}$, the small circle represents $\partial M_{2} \setminus \gamma_{2}$, and $\gamma_{2} \ne \overline{\gamma_{2}} = \partial M_{2}$.

\begin{figure}[htbp]
\centering
  \includegraphics[width=0.7\textwidth]{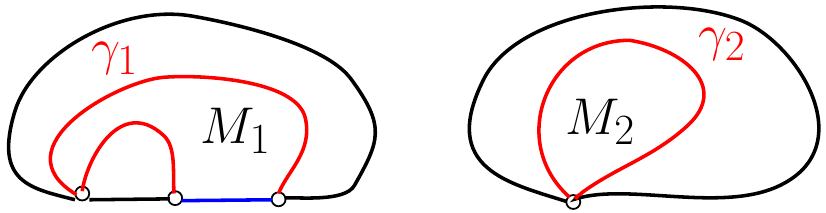}
  \caption{$\gamma_{i}$ and $\partial M_{i}$.}
  \label{fig_boundary}
\end{figure}
\end{example}

In dimension $2$, the general $\gamma_{i}$ and $\overline{\gamma_{i}}$ can be much more complicated than the examples shown in Fig.~\ref{fig_boundary}; let alone the situation in high dimensions. Nevertheless, we can prove the following general and neat conclusion, which matches the intuition conveyed by Fig.~\ref{fig_boundary}.

\begin{lemma}\label{lem_subdomain_interior}
As subsets of $M$, in the sense of point-set topology, $M_{i} \setminus \overline{\gamma_{i}}$ is the interior of $M_{i}$, and $\overline{\gamma_{i}} \setminus \gamma_{i} = \overline{\gamma_{i}} \cap \partial M$.
\end{lemma}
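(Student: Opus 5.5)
We classify each point of $M_i$ by its local picture and compute the point-set interior of $M_i$ directly. Write $\operatorname{int}(M_i)$ for the point-set interior of $M_i$ in $M$. The three types of points are: manifold-interior points of $M_i$, points of $\gamma_i = \partial M_i\setminus\partial M$, and points of $\partial M_i\cap\partial M$. We will prove three facts:
\begin{enumerate}[(a)]
\item $M_i\setminus\partial M_i\subseteq \operatorname{int}(M_i)$;
\item no point of $\overline{\gamma_i}$ lies in $\operatorname{int}(M_i)$;
\item every point $x\in(\partial M_i\cap\partial M)\setminus\overline{\gamma_i}$ lies in $\operatorname{int}(M_i)$.
\end{enumerate}
Together, (a)–(c) give $\operatorname{int}(M_i)=M_i\setminus\overline{\gamma_i}$.

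Facts (a) and (b) are short.
\begin{itemize}
\item For (a), invariance of domain shows $M_i\setminus\partial M_i$ is open in $M$, as already noted in the text.
\item For (b), take $x\in\gamma_i$. Since $x\notin\partial M$, the embedding condition gives a chart $U\cong\mathbb{R}^d$ with $U\cap M_i\cong\mathbb{R}^d_+$. Every neighborhood of $x$ then meets $U\setminus M_i\cong\{\xi_d<0\}$, so $x\notin\operatorname{int}(M_i)$.
\item Also for (b), the complement of $\operatorname{int}(M_i)$ is closed and $\overline{\gamma_i}\subseteq M_i$ because $M_i$ is compact. Hence the closure $\overline{\gamma_i}$ also misses $\operatorname{int}(M_i)$.
\end{itemize}

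The key step is (c). Take $x\in\partial M_i\cap\partial M$ with $x\notin\overline{\gamma_i}$. Choose a neighborhood $V$ of $x$ in $M$ with $V\cap\gamma_i=\emptyset$; then $V\cap\partial M_i\subseteq\partial M$.
\begin{itemize}
\item Work in $\widehat M$. The embedding condition gives $U\ni x$ with $\phi:U\to\mathbb{R}^d$, $\phi(x)=0$, $\phi(U\cap M_i)=\mathbb{R}^d_+$. Shrink to a small ball $B$ around $0$ with $\phi^{-1}(B)\subseteq V\cup(\widehat M\setminus M)$, i.e.\ $\phi^{-1}(B)\cap M\subseteq V$.
\item Then $\phi(\partial M_i\cap\phi^{-1}(B))=B\cap\{\xi_d=0\}$, so $B\cap\{\xi_d=0\}\subseteq\phi(\partial M)$.
\item Claim: near $x$, $\partial M\cap\phi^{-1}(B)=\partial M_i\cap\phi^{-1}(B)$. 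We have $\phi(\partial M\cap\phi^{-1}(B))\supseteq B\cap\{\xi_d=0\}$. The set $\phi(\partial M)$ is a topological $(d-1)$-manifold without boundary, $B\cap\{\xi_d=0\}$ is a $(d-1)$-disc inside it, and invariance of domain makes this disc open in $\phi(\partial M)$.
\item Separation argument: $\partial M$ locally separates $\widehat M$ into $M$-side and collar side, and $M_i\subseteq M$. Suppose a point of $\partial M$ near $x$ lay in $\{\xi_d<0\}$ or $\{\xi_d>0\}$ off the hyperplane. A connectedness argument using $B\setminus\{\xi_d=0\}$ (two half-balls, the upper in $M_i\subseteq M$) would force collar points into the upper half or $M$-boundary points strictly inside $M_i$'s interior. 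The latter is impossible, since points of $\partial M$ cannot be manifold-interior points of $M$. So the claim holds for a smaller ball.
\item Conclusion: the lower half-ball consists of points of $\widehat M$ not in $M_i$. These lie across $\partial M$ from $M$, so they lie in the collar, because the lower half-ball is connected, disjoint from $\partial M$, and adjacent to collar points. Hence $\phi^{-1}(B)\cap M=\phi^{-1}(B\cap\mathbb{R}^d_+)\subseteq M_i$.
\item Therefore $M_i$ contains the $M$-neighborhood $\phi^{-1}(B)\cap M$ of $x$, so $x\in\operatorname{int}(M_i)$.
\end{itemize}
The main obstacle is making this separation argument rigorous. We plan to use the collar structure of $\widehat M$ explicitly: near $x$, points of $\widehat M\setminus M$ have the form $(y,t)$ with $t>0$, and $M\setminus\partial M$ is open. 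The lower half-ball is connected and avoids $\partial M$, so it lies entirely in $M\setminus\partial M$ or entirely in $\widehat M\setminus M$. The first option would make $x$ a manifold-interior point of $M$, a contradiction.

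Second identity. First, $\overline{\gamma_i}\setminus\gamma_i\subseteq\partial M$ directly from $\gamma_i=\partial M_i\setminus\partial M$ and $\overline{\gamma_i}\subseteq\partial M_i$; the latter holds because $\partial M_i$ is compact, hence closed. Conversely, $\gamma_i\cap\partial M=\emptyset$ gives $\overline{\gamma_i}\cap\partial M\subseteq\overline{\gamma_i}\setminus\gamma_i$. So this identity is elementary.
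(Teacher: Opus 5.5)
Your argument is correct. Parts (a), (b) and the identity $\overline{\gamma_i}\setminus\gamma_i=\overline{\gamma_i}\cap\partial M$ are essentially what the paper does, but your step (c) takes a different route.

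The paper stays inside $M$ for (c). It takes a boundary chart $B\cong\mathbb{R}^d_+$ of $M$ at $x$ with $B\cap\overline{\gamma_i}=\emptyset$. The connected set $B\setminus\partial M$ then avoids $\partial M_i$, so it lies in $M\setminus\partial M_i=(M_i\setminus\partial M_i)\sqcup(M\setminus M_i)$, a disjoint union of open sets. Since $x\in\partial M_i$ is a limit of points of $M_i\setminus\partial M_i$, the set $B\setminus\partial M$ meets $M_i\setminus\partial M_i$, hence lies in it. Therefore $B\subseteq\overline{B\setminus\partial M}\subseteq M_i$.

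That argument needs neither $\widehat M$, nor the embedding condition at points of $\partial M$, nor invariance of domain on $\partial M$. So it is shorter and shows (c) does not depend on the embedding hypothesis at $\partial M$. Your version uses three extra ingredients: the embedding chart in $\widehat M$, invariance of domain in the $(d-1)$-manifold $\partial M$, and the splitting of $\widehat M\setminus\partial M$ into the open sets $M\setminus\partial M$ and $\widehat M\setminus M$. In return it exhibits a single chart of $\widehat M$ in which $M$ and $M_i$ both become $\mathbb{R}^d_+$ near $x$.

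One thing to fix: the ``separation argument'' bullet is not a proof as written, and you do not need it. You already get from invariance of domain that $D=\phi^{-1}(B\cap\{\xi_d=0\})$ is open in $\partial M$. That means there is a smaller ball $B'\ni 0$ with $\partial M\cap\phi^{-1}(B')\subseteq\phi^{-1}(B'\cap\{\xi_d=0\})$, which is exactly your claim. With that in hand, the connectedness argument of your last paragraph completes (c) rigorously: the lower half-ball lies entirely in $M\setminus\partial M$ or entirely in $\widehat M\setminus M$, and the former would make $x$ a manifold-interior point of $M$. So the ``main obstacle'' you flag is already resolved by your own invariance-of-domain step.
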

\begin{proof}
Now boundary and interior are in the sense of point-set topology.

Since $\gamma_{i} \subseteq \partial M_{i}$ and $\partial M_{i}$ is closed, we see $\overline{\gamma_{i}} \subseteq \partial M_{i}$. Since $\gamma_{i} \cap \partial M = \emptyset$,
\[
\gamma_{i} \subseteq \overline{\gamma_{i}} \setminus \partial M \subseteq \partial M_{i} \setminus \partial M = \gamma_{i}.
\]
We infer $\overline{\gamma_{i}} \setminus \partial M = \gamma_{i}$ and hence $\overline{\gamma_{i}} \setminus \gamma_{i} = \overline{\gamma_{i}} \cap \partial M$.

The main part of the proof is to verify $M_{i} \setminus \overline{\gamma_{i}}$ is the interior of $M_{i}$. By the Invariance of Domain Theorem (\cite[Theorem~2B.3]{Hatcher}), we know $M_{i} \setminus \partial M_{i}$ lies in the interior of $M_{i}$. Moreover, the topological embedding condition (see Fig.~\ref{fig_subdomain}) implies that $\gamma_{i}$ and hence $\overline{\gamma_{i}}$ are contained in the boundary of $M_{i}$. To finish the proof, it suffices to show, if $x \in \partial M_{i} \setminus \overline{\gamma_{i}}$, then $x$ is in the interior of $M_{i}$.

Before proving this directly, we first claim $M_{i} \setminus \partial M_{i}$ is a connected component of $M \setminus \partial M_{i}$. Clearly, $M \setminus \partial M_{i}$ is a disjoint union of $M_{i} \setminus \partial M_{i}$ and $M \setminus M_{i}$. Since both $M_{i} \setminus \partial M_{i}$ and $M \setminus M_{i}$ are open, they are mutually disconnected. Since $M_{i}$ is connected, so is $M_{i} \setminus \partial M_{i}$. We infer  $M_{i} \setminus \partial M_{i}$ is a connected component of $M \setminus \partial M_{i}$.

Let's study $x \in \partial M_{i} \setminus \overline{\gamma_{i}}$. We necessarily have $x \in \partial M \setminus \overline{\gamma_{i}}$. Since $\overline{\gamma_{i}}$ is closed, and by the definition of $\partial M$, there exists an open neighborhood $B$ of $x$ in $M$ such that the following hold: (1) $B \cap \overline{\gamma_{i}} = \emptyset$; (2) $B$ is homeomorphic to $\mathbb{R}^{d}_{+}$; (3) under this homeomorphism, $B \setminus \partial M$ corresponds to $\mathbb{R}^{d}_{+} \setminus (\mathbb{R}^{d-1} \times \{ 0 \})$.

By the (1), we know
\[
B \setminus \partial M = (B \setminus \gamma_{i}) \setminus \partial M = (B \setminus (\partial M_{i} \setminus \partial M)) \setminus \partial M = B \setminus (\partial M_{i} \cup \partial M) \subseteq M \setminus \partial M_{i}.
\]
Since $x \in \partial M_{i}$, it is in the closure of $M_{i} \setminus \partial M_{i}$, so there is a $y \in B \cap (M_{i} \setminus \partial M_{i})$. Clearly, $y \notin \partial M$, we have $y \in (B \setminus \partial M) \cap (M_{i} \setminus \partial M_{i})$ and hence
\[
(B \setminus \partial M) \cap (M_{i} \setminus \partial M_{i}) \ne \emptyset.
\]
Since $B \setminus \partial M$ is connected and $M_{i} \setminus \partial M_{i}$ is a connected component of $M \setminus \partial M_{i}$, we infer $(B \setminus \partial M) \subseteq (M_{i} \setminus \partial M_{i})$ and hence $B \subseteq \overline{B \setminus \partial M} \subseteq M_{i}$. Since $B$ is a neighborhood of $x$, we see $x$ is an interior point of $M_{i}$.
\end{proof}

The following lemma is needed for the proof of Proposition \ref{prop_winding}.
\begin{lemma}\label{lem_boundary_interior}
For each $i$, in the sense of point-set topology, $\overline{\gamma_{i}} \cap \partial M$ has no relative interior in $\partial M$.
\end{lemma}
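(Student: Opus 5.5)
Argue by contradiction. Suppose some nonempty set $V$, open in $\partial M$, satisfies $V \subseteq \overline{\gamma_{i}} \cap \partial M$. Fix $x \in V$. The aim is to contradict Lemma \ref{lem_subdomain_interior}: near $x$, every point of $\partial M$ lies in $\overline{\gamma_{i}}$, and we will show this forces a whole open piece of $\partial M$ near $x$ into $\partial M_{i} \setminus \overline{\gamma_{i}}$, or else leaves $M_{i}$ with an impossible local shape.

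First, use the subdomain structure at $x$. Since $x \in \partial M_{i}$ and $x \in \partial M$, the definition of a subdomain gives a neighborhood $U$ of $x$ in $\widehat{M}$ and a homeomorphism $\phi\colon U \to \mathbb{R}^{d}$ with $\phi(x)=0$ and $\phi(U\cap M_{i}) = \mathbb{R}^{d}_{+}$. Shrinking if needed, we may also take a collar chart of $\partial M$ at $x$, and we may assume $U \cap \partial M \subseteq V$. The plan has four steps.

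\begin{enumerate}[(a)]
\item Since $V \subseteq \overline{\gamma_{i}} \subseteq \partial M_{i}$, the set $U\cap\partial M$ lies in $\partial M_{i}$. Under $\phi$, this means $\phi(U \cap \partial M) \subseteq \mathbb{R}^{d-1}\times\{0\}$.
\item The set $U\cap\partial M$ is homeomorphic to an open subset of $\mathbb{R}^{d-1}$. By Invariance of Domain, applied in dimension $d-1$, its image is open in $\mathbb{R}^{d-1}\times\{0\}$. So after shrinking, $\partial M$ and $\partial M_{i}$ coincide near $x$.
\item By step (b), $\gamma_{i} = \partial M_{i} \setminus \partial M$ misses a neighborhood $W$ of $x$ in $\widehat{M}$. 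Hence $x \notin \overline{\gamma_{i}}$.
\item This contradicts $x \in V \subseteq \overline{\gamma_{i}}$.
\end{enumerate}

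The main obstacle is step (b), specifically producing the neighborhood $W$ in step (c). Knowing that $\phi(U\cap\partial M)$ is an open subset $O$ of $\mathbb{R}^{d-1}\times\{0\}$ containing $0$ is not quite enough. We must also show that points of $\partial M_{i}$ close to $x$ lie in $\partial M$, and are not points of $\gamma_{i}$ accumulating at $x$.

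To handle this, take a small ball $D$ around $0$ whose flat part satisfies $D\cap(\mathbb{R}^{d-1}\times\{0\}) \subseteq O$. We will show that $\phi^{-1}(D)\cap\partial M_{i} \subseteq \partial M$. Indeed, $\phi^{-1}(D)\cap\partial M_{i}$ corresponds to $D\cap(\mathbb{R}^{d-1}\times\{0\})$, and this set lies in $O = \phi(U\cap \partial M)$. Therefore $W := \phi^{-1}(D)$ satisfies $W\cap\gamma_{i}=\emptyset$, which is exactly what step (c) needs.

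This argument requires care in one respect. We must verify that $\phi(U\cap\partial M)$ is open in the hyperplane rather than merely contained in it. This is where Invariance of Domain, applied to the injective continuous map $\phi$ restricted to the $(d-1)$-manifold $U\cap\partial M$, is essential.
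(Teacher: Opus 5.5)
Your argument is correct and is essentially the paper's proof: in both, Invariance of Domain (in dimension $d-1$) shows that an open piece of $\partial M$ lying inside $\partial M_{i}$ is relatively open in $\partial M_{i}$, so $\gamma_{i} = \partial M_{i} \setminus \partial M$ misses a neighborhood of $x$, which contradicts $x \in \overline{\gamma_{i}}$. The paper argues abstractly, treating $\partial M_{i}$ as a $(d-1)$-manifold, whereas you work in the explicit subdomain chart $\phi$, where $U \cap \partial M_{i}$ is the hyperplane; your opening remark about contradicting Lemma \ref{lem_subdomain_interior} and the extra collar chart are not actually used.
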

\begin{proof}
We prove by contradiction. Suppose $x \in \overline{\gamma_{i}} \cap \partial M$ is a relative interior point of $\overline{\gamma_{i}} \cap \partial M$ in $\partial M$. Then there would be an $A \subseteq \partial M$ such that $A$ is relatively open in $\partial M$ and $x \in A \subseteq \overline{\gamma_{i}} \cap \partial M$.

Note that $A \subseteq \overline{\gamma_{i}} \subseteq \partial M_{i}$. Since both $\partial M$ and $\partial M_{i}$ are $(d-1)$-dimensional topological manifolds, by the Invariance of Domain Theorem (\cite[Theorem~2B.3]{Hatcher}), we infer $A$ would be also relatively open in $\partial M_{i}$. As a result, there would be an open neighborhood $U$ of $x$ in $M$ such that $A = U \cap \partial M_{i}$. Now we would have
\[
U \cap \gamma_{i} = U \cap (\partial M_{i} \setminus \partial M) = (U \cap \partial M_{i}) \setminus \partial M = A \setminus \partial M = \emptyset,
\]
which contradicts the fact that $x \in \overline{\gamma_{i}}$.
\end{proof}

\subsection{Lipschitz Boundary}\label{subsec_lipschitz}
In Assumption \ref{asp_decomposition}, the $\partial M_{i}$ is required to be Lipschitz. We address the Lipschitz property now.

Our target manifold $M$ is smooth, not merely topological. In other words, $M$ is covered by a family of $C^{\infty}$-compatible coordinate charts, where compatibility means transition functions between charts are $C^{\infty}$-diffeomorphisms (see \cite{Warner}). As described above, we can extend a manifold $M$ with boundary $\partial M$ to manifold $\widehat{M}$ without boundary by gluing $M$ and $\partial M \times [0,1)$ along $\partial M$. When $M$ is smooth, the resulting $\widehat{M}$ also admits a smooth structure compatible with those of $M$ and $\partial M \times [0,1)$; moreover, this smooth structure on $\widehat{M}$ is unique up to $C^{\infty}$-diffeomorphisms (see Lemmas A.1 and A.2 in \cite[p.~86]{milnor3}). With this construction, we may assume $M$ has no boundary in this subsection.

From the viewpoint of numerical computation, it's too stringent to require a subdomain $M_{i}$ to be a $C^{1}$-embedded submanifold with boundary of $M$, let alone $C^{\infty}$-embedded. For example, a $d$-dimensional finite polyhedron $D$ is not a $C^{1}$-submanifold of $\mathbb{R}^{d}$ because $\partial D$ has corners. This $D$ has merely a Lipschitz boundary.

Let's define the Lipschitz property of a boundary in the setting of manifolds. We say $\partial M_{i}$ is Lipschitz, if $\forall x \in \partial M_{i}$, there is a coordinate chart $U$ of $M$ containing $x$ such that, under this specific coordinate system, $U \cap \partial M_{i}$ is Lipschitz in the usual sense in $\mathbb{R}^{d}$ (see \cite[4.9]{Adams_Fournier}). More precisely, after shrinking $U$ and applying an affine isomorphism of $\mathbb{R}^{d}$ if necessary, there is a $C^{\infty}$-diffeomorphism $\phi \colon U \rightarrow \phi (U) \subset \mathbb{R}^{d}$ satisfying the following two conditions: First, $\phi (x) =0$ in $\mathbb{R}^{d}$ and $\phi (U) = W \times (-\delta, \delta)$, where $W$ is an open neighborhood of $0 \in \mathbb{R}^{d-1}$. Second, there exists a Lipschitz function $l \colon W \rightarrow (-\delta, \delta)$ such that
\[
\phi (U \cap \partial M_{i}) = \mathrm{graph} (l) := \{ (w, l(w)) \in W \times (-\delta, \delta) \mid w \in W \}.
\]
The condition $\phi (x) =0$ implies $l(0) =0$.

Note that a Lipschitz boundary is more restrictive than the boundary of a general subdomain defined in $\S$\ref{subsec_subdomain}. Although the topological lemmas proved in $\S$\ref{subsec_subdomain} do not require the Lipschitz boundary assumption, this extra regularity is important for analysis, see e.g.,~Lemma \ref{lem_perron_lipschitz} below. In practice, one can also easily construct subdomains with Lipschitz boundary, for instance, the subdomains used in the numerical experiments in \cite{qin_zhang_zhang,cao_qin,qin_wang_wang,jiang_qin_wang} all have Lipschitz boundaries.

We emphasize that the above definition of Lipschitz property is independent of the choice of charts. Equivalently, Lipschitz property is preserved under $C^{\infty}$-diffeomorphisms. We can actually prove a stronger statement: this properties is even preserved under $C^{1}$-diffeomorphisms. The crux of the matter is reduced to the following problem. Suppose $l \colon B^{d-1}_{r} \rightarrow (-\delta, \delta)$ is a Lipschitz function with $l(0) =0$, where $B^{d-1}_{r} := \{ w \in \mathbb{R}^{d-1} \mid \| w \| <r \}$. Suppose further
\[
H \colon B^{d-1}_{r} \times (-\delta, \delta) \rightarrow H(B^{d-1}_{r} \times (-\delta, \delta)) \subseteq \mathbb{R}^{d}
\]
is a $C^{1}$-diffeomorphism with $H(0) =0$ and $\mathrm{D} H(0) =I$, where $\mathrm{D} H$ is the differential of $H$. We need to check that $H (\mathrm{graph} (l)) \cap V$ is the graph of a Lipschitz function for some open neighborhood $V$ of $0 \in \mathbb{R}^{d}$. The verification proceeds as follows.

Define $R(x) := H(x) -x$. Then, $\forall x,y \in B^{d-1}_{r} \times (-\delta, \delta)$,
\begin{align*}
R(x) - R(y) & = \int_{0}^{1} \frac{\mathrm{d}}{\mathrm{d} t} H(y+t(x-y)) \mathrm{d} t - (x-y) \\
& = \int_{0}^{1} [\mathrm{D} H (y+t(x-y)) - I] \mathrm{d} t \cdot (x-y).
\end{align*}
Since $\mathrm{D} H$ is continuous and $\mathrm{D} H(0) =I$, we infer, $\forall \epsilon >0$, $\exists r' \in (0,r)$, $\exists \delta' \in (0, \delta)$, such that $\forall x,y \in B^{d-1}_{r'} \times (-\delta', \delta')$,
\begin{equation}\label{eqn_remainder_lipschitz}
\| R (x) - R(y) \| \le \epsilon \| x - y \|.
\end{equation}

Write $H(x) = (H_{1} (x), H_{2} (x))$, where $H_{1} (x) \in \mathbb{R}^{d-1}$ and $H_{2} (x) \in \mathbb{R}$. Similarly, write $R(x) = (R_{1} (x), R_{2} (x))$. Define a map $\Psi \colon B^{d-1}_{r} \rightarrow \mathbb{R}^{d-1}$ as $\Psi (w) = H_{1} (w, l(w))$. Then $\forall w_{1}, w_{2} \in B^{d-1}_{r}$, we have
\begin{align*}
\| \Psi (w_{1}) - \Psi (w_{2}) \| & = \| w_{1} - w_{2} + (R_{1} (w_{1}, l(w_{1}))- R_{1} (w_{2}, l(w_{2}))) \| \\
& \ge \| w_{1} - w_{2} \| - \| R_{1} (w_{1}, l(w_{1}))- R_{1} (w_{2}, l(w_{2})) \| \\
& \ge \| w_{1} - w_{2} \| - \| R (w_{1}, l(w_{1}))- R (w_{2}, l(w_{2})) \|.
\end{align*}
Since $l$ is Lipschitz, by \eqref{eqn_remainder_lipschitz}, we infer, $\exists r' \in (0,r)$, such that $\forall w_{1}, w_{2} \in B^{d-1}_{r'}$,
\[
\| R (w_{1}, l(w_{1}))- R (w_{2}, l(w_{2})) \| \le \frac{1}{2} \| w_{1} - w_{2} \|
\]
and hence
\[
\| \Psi (w_{1}) - \Psi (w_{2}) \| \ge \frac{1}{2} \| w_{1} - w_{2} \|.
\]
Thus $\Psi^{-1}$ exists and is Lipschitz on $\Psi (B^{d-1}_{r'})$. We also know $\Psi (B^{d-1}_{r'})$ is an neighborhood of $0 \in \mathbb{R}^{d-1}$. This topological fact immediately follows from Invariance of Domain Theorem (\cite[Theorem~2B.3]{Hatcher}), but it can also be proved by the more elementary Banach Fixed Point Theorem.

Define $l' \colon \Psi (B^{d-1}_{r'}) \rightarrow \mathbb{R}$ as the composition
\[
l' \colon \Psi (B^{d-1}_{r'})  \overset{\Psi^{-1}}{\longrightarrow} B^{d-1}_{r'} \overset{w \mapsto (w,l(w))}{\longrightarrow} B^{d-1}_{r'} \times (-\delta, \delta) \overset{H_{2}}{\longrightarrow} \mathbb{R}.
\]
Then $l'$ is Lipschitz and $\mathrm{graph} (l') \subseteq H (\mathrm{graph} (l))$. Since $H$ is a homeomorphism, $H (B^{d-1}_{r'} \times (-\delta, \delta))$ is open in $\mathbb{R}^{d}$. Choose $r''>0$ such that $\Psi (B^{d-1}_{r'}) \supseteq B^{d-1}_{r''}$. Define
\[
V := (B^{d-1}_{r''} \times \mathbb{R}) \cap H (B^{d-1}_{r'} \times (-\delta, \delta)).
\]
Then $V$ is an open neighborhood of $0 \in \mathbb{R}^{d}$ and $V \cap H (\mathrm{graph} (l))$ is the graph of $l'$.

We have thus proved that the Lipschitz property of $\partial M_{i}$ is independent of the choice of charts, provided that the atlas is $C^{1}$-compatible.

It is worth pointing out, however, that even if a subdomain takes the form of a polyhedron in one particular chart, it need not do so in another, since the transition functions between charts are generally nonlinear.

\subsection{Winding Number}\label{subsec_winding}
Now let's study the concept of winding number in Definition \ref{def_winding}. We assume $\partial M \ne \emptyset$ throughout this subsection.

To gain intuition, we first examine some simple examples in dimension $2$.
\begin{example}
In the left panel of Fig.~\ref{fig_winding}, the domain enclosed by the outer red curve is $M$. The annulus bounded by the two red curves is $M_{1}$, the region bounded by the blue curve is $M_{2}$. This gives a decomposition $\mathcal{D} := \{ M_{1}, M_{2} \}$ of $M$. Suppose $\rho := \{ \rho_{1}, \rho_{2} \}$ is a partition of unity subordinate to $\mathcal{D}$. Then $\partial M_{1} = \partial M \cup \gamma_{1}$, where $\gamma_{1}$ is the inner red curve, and $M_{2} \cap \partial M = \emptyset$. Thus $I_{1} (\mathcal{D}, \rho) = \{ 1 \}$. Since $\rho_{2}|_{\partial M_{2}} =0$, we infer $\rho_{1}|_{\partial M_{2}} =1$ and $\partial M_{2} \subseteq U_{1} (\rho)$. So $I_{2} (\mathcal{D}, \rho) = \{ 1,2 \}$ and the winding number $N(\mathcal{D}, \rho) =2$. (Recall that $I_{i}$ and $U_{i}$ are defined prior to Definition \ref{def_winding}.)

In the right panel of Fig.~\ref{fig_winding}, we add to $\mathcal{D}$ one more subdomain $M_{3}$ which is bounded by the green curve. Let $\widetilde{\mathcal{D}} := \{ M_{1}, M_{2}, M_{3} \}$. Suppose $\tilde{\rho} := \{ \tilde{\rho}_{1}, \tilde{\rho}_{2}, \tilde{\rho}_{3} \}$ is subordinate to $\widetilde{\mathcal{D}}$. Since $\partial M_{3} = \overline{\gamma_{3}}$, we still have $I_{1} (\widetilde{\mathcal{D}}, \tilde{\rho}) = \{ 1 \}$. On the other hand, since $\partial M_{3} \cap \partial M = \{ x \}$, we have $\tilde{\rho}_{2} (x) = \tilde{\rho}_{3} (x) =0$ and hence $\tilde{\rho}_{1} (x) =1$. Consequently, $x \in \partial M_{3} \cap U_{1} (\tilde{\rho})$ and $3 \in I_{2} (\widetilde{\mathcal{D}}, \tilde{\rho})$. (Alternatively, since $\partial M_{3} \cap \partial M \ne \emptyset$, the general Lemma \ref{lem_boundary_neighborhood} below also implies $3 \in I_{2} (\widetilde{\mathcal{D}}, \tilde{\rho})$.) Note that, $y \in \partial M_{2}$ and $\tilde{\rho}_{2} (y) = \tilde{\rho}_{3} (y) =0$. So $y \in \partial M_{2} \cap U_{1} (\tilde{\rho})$ and $2 \in I_{2} (\widetilde{\mathcal{D}}, \tilde{\rho})$. Hence $I_{2} (\widetilde{\mathcal{D}}, \tilde{\rho}) = \{ 1,2,3 \}$, and therefore $N (\widetilde{\mathcal{D}}, \tilde{\rho}) =2$.
\begin{figure}[htbp]
\centering
  \includegraphics[width=0.7\textwidth]{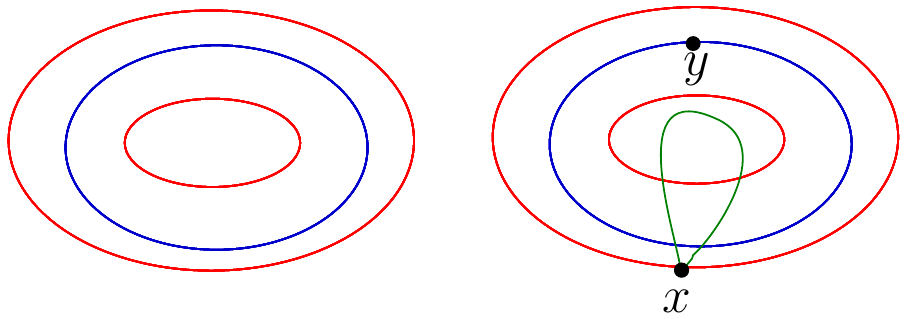}
  \caption{Winding number.}
  \label{fig_winding}
\end{figure}
\end{example}

We proceed to prove several general results on winding numbers.
\begin{lemma}\label{lem_winding_start}
$I_{1} \ne \emptyset$.
\end{lemma}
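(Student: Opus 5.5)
We must show that if $\partial M \ne \emptyset$, then some $i$ satisfies $\partial M_{i} \ne \overline{\gamma_{i}}$. The plan is to argue by contradiction using Lemma \ref{lem_boundary_interior} together with the Baire category theorem on $\partial M$.

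Suppose $\partial M_{i} = \overline{\gamma_{i}}$ for every $i$. The first step is to show $\partial M \subseteq \bigcup_{i=1}^{m} \overline{\gamma_{i}}$. Take $x \in \partial M$. By Assumption \ref{asp_decomposition}, $x \in M_{i} \setminus \overline{\gamma_{i}}$ for some $i$. By Lemma \ref{lem_subdomain_interior}, this set is the point-set interior of $M_{i}$ in $M$, so a neighborhood of $x$ in $M$ lies in $M_{i}$. That neighborhood contains boundary points of $M$ arbitrarily close to $x$, so locally $M_{i}$ looks like a half-space at $x$. Hence $x$ is a manifold boundary point of $M_{i}$: a neighborhood of $x$ in $M_{i}$ is a neighborhood of $x$ in $M$, which is homeomorphic to $\mathbb{R}^{d}_{+}$ with $x \mapsto 0$, and by invariance of domain $x$ cannot also be an interior point of $M_{i}$. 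So $x \in \partial M_{i} = \overline{\gamma_{i}}$, which contradicts $x \notin \overline{\gamma_{i}}$. Indeed this already yields a contradiction pointwise: any $x \in \partial M$ covered by $M_{i}\setminus\overline{\gamma_i}$ lies in $\partial M_i \setminus \overline{\gamma_i}$. Therefore, under the contrary hypothesis, no point of $\partial M$ is covered, which is impossible since the sets $M_{i} \setminus \overline{\gamma_{i}}$ cover $M$ and $\partial M \ne \emptyset$.

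This direct argument may make Baire unnecessary. As a backup, the intended route would be the following. Each $\overline{\gamma_{i}} \cap \partial M$ is closed and, by Lemma \ref{lem_boundary_interior}, nowhere dense in $\partial M$. The space $\partial M$ is a nonempty compact manifold, hence a Baire space, so the finite union $\bigcup_{i} (\overline{\gamma_{i}} \cap \partial M)$ cannot equal $\partial M$. Choose $x \in \partial M$ outside all $\overline{\gamma_{i}}$. Then $x$ lies in some $M_{i}$ (since the $M_{i}$ cover $M$), so $x \in \partial M_{i} \setminus \overline{\gamma_{i}}$ by the invariance-of-domain argument above, and thus $i \in I_{1}$.

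The main obstacle is the careful justification that a point of $\partial M$ lying in $M_{i}$ belongs to $\partial M_{i}$. If $x$ were a manifold interior point of $M_{i}$, then by invariance of domain $M_{i}\setminus\partial M_{i}$ would be open in $M$, giving a neighborhood of $x$ in $M$ homeomorphic to $\mathbb{R}^{d}$. This contradicts $x \in \partial M$, again by invariance of domain, as in the discussion in $\S$\ref{subsec_boundary}.
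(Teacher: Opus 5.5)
Your proposal is correct, and your main argument takes a different, shorter route than the paper's.

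\textbf{The paper's route (your backup).} The paper's proof uses only the fact that the sets $M_i$ cover $\partial M$. It invokes Lemma \ref{lem_boundary_interior} to see that the closed set $\bigcup_i(\overline{\gamma_i}\cap\partial M)$ has empty relative interior in $\partial M$, so it misses some $x\in\partial M$. It then observes that $x\in\partial M_j\setminus\overline{\gamma_j}$ for any $j$ with $x\in M_j$. This is exactly your backup route.

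\textbf{Your main route.} You instead use the stronger covering hypothesis $M=\bigcup_i(M_i\setminus\overline{\gamma_i})$ from Assumption \ref{asp_decomposition}. Any $x\in\partial M$ lies in some $M_i\setminus\overline{\gamma_i}$. Since $(M_i\setminus\partial M_i)\cap\partial M=\emptyset$ by invariance of domain, you get $x\in\partial M_i\setminus\overline{\gamma_i}$, so $i\in I_1$ directly. The contradiction framing is superfluous. This is the same observation the paper itself makes in the proof of Lemma \ref{lem_boundary_neighborhood}(1).

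\textbf{Comparison.} Your route is more elementary: it needs neither Lemma \ref{lem_boundary_interior} nor any category-type argument. The paper's route is more general: it yields $I_1\neq\emptyset$ for any finite cover of $M$ by subdomains, overlapping or not. The same nowhere-density reasoning is also what is needed in the proof of Proposition \ref{prop_winding}. There, $\partial M$ is only known to be covered by the sets $M_j$ (via $D_{n_0}=M$), not by their point-set interiors $M_j\setminus\overline{\gamma_j}$, so your shortcut would not apply.

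\textbf{A small remark on the backup.} Baire is unnecessary. A finite union of closed sets with empty interior has empty interior in any topological space: if an open $U\subseteq A\cup B$, then $U\setminus A$ is open and contained in $B$, hence empty, so $U\subseteq A$ and $U=\emptyset$.
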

\begin{proof}
We know $\overline{\gamma_{i}} \cap \partial M$ is closed for each $i$. By Lemma \ref{lem_boundary_interior}, $\bigcup_{i=1}^{m} (\overline{\gamma_{i}} \cap \partial M)$ is also closed and has no relative interior in $\partial M$. Thus
\[
\partial M \cap \bigcup_{i=1}^{m} \overline{\gamma_{i}} = \bigcup_{i=1}^{m} (\overline{\gamma_{i}} \cap \partial M) \ne \partial M.
\]
However, $\partial M \subseteq \bigcup_{i=1}^{m} M_{i}$ and $(M_{i} \setminus \partial M_{i}) \cap \partial M = \emptyset$ for all $i$. So $\partial M \subseteq \bigcup_{i=1}^{m} \partial M_{i}$. We infer $\partial M_{j} \ne \overline{\gamma_{j}}$ for some $j$ and hence $I_{1} \ne \emptyset$.
\end{proof}

\begin{lemma}\label{lem_winding}
Suppose $n > 1$, the following hold:
\begin{enumerate}[(1)]
\item $i \in I_{n}$ if and only if $\partial M_{i} \ne \overline{\gamma_{i}}$ or $\partial M_{i} \cap (\bigcup_{j \in I_{n-1}} U_{j}) \ne \emptyset$.

\item Assume further $n \le N$, then $i \in I_{n} \setminus I_{n-1}$ if and only if $\partial M_{i} = \overline{\gamma_{i}}$, $\partial M_{i} \cap ( \bigcup_{j \in I_{n-2}} U_{j} ) = \emptyset$ and $\partial M_{i} \cap ( \bigcup_{j \in I_{n-1} \setminus I_{n-2}} U_{j} ) \ne \emptyset$.
\end{enumerate}
\end{lemma}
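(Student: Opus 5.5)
The statement to prove is Lemma \ref{lem_winding}. Both parts are set-theoretic consequences of the recursive definition of $I_{n}$, so I will proceed by unwinding that definition and using the monotonicity $I_{0} \subseteq I_{1} \subseteq I_{2} \subseteq \cdots$. No geometry or analysis is needed.

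For (1), I induct on $n \ge 2$ and establish the stronger claim $I_{1} \subseteq I_{n}$ along the way.

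Base case $n=2$. By definition, $I_{2} = I_{1} \cup \{ i \mid \partial M_{i} \cap \bigcup_{j \in I_{1}} U_{j} \ne \emptyset \}$. Membership in $I_{1}$ means exactly $\partial M_{i} \ne \overline{\gamma_{i}}$, so the equivalence holds.

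Inductive step. For $n > 2$, we have $I_{n} = I_{n-1} \cup S_{n}$, where $S_{n} := \{ i \mid \partial M_{i} \cap \bigcup_{j \in I_{n-1}} U_{j} \ne \emptyset \}$. By the induction hypothesis, $i \in I_{n-1}$ if and only if $\partial M_{i} \ne \overline{\gamma_{i}}$ or $i \in S_{n-1}$. Since $I_{n-2} \subseteq I_{n-1}$, we get $S_{n-1} \subseteq S_{n}$. Hence $I_{n-1} \cup S_{n} = I_{1} \cup S_{n-1} \cup S_{n} = I_{1} \cup S_{n}$, which is exactly (1). The one point to check is that the monotonicity $I_{n-2} \subseteq I_{n-1}$ holds by construction, which is immediate since each $I_{k}$ is defined as $I_{k-1}$ union something.

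For (2), take $1 < n \le N$. I apply (1) to both $I_{n}$ and $I_{n-1}$.

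Case $n \ge 3$. Here $I_{n-1} = I_{1} \cup S_{n-1}$, so $i \notin I_{n-1}$ if and only if $\partial M_{i} = \overline{\gamma_{i}}$ and $\partial M_{i} \cap \bigcup_{j \in I_{n-2}} U_{j} = \emptyset$. Combined with $i \in I_{n} = I_{1} \cup S_{n}$, this forces $i \in S_{n}$. So $\partial M_{i}$ meets $\bigcup_{j \in I_{n-1}} U_{j}$ but does not meet $\bigcup_{j \in I_{n-2}} U_{j}$. Since $\bigcup_{j \in I_{n-1}} U_{j} = \bigcup_{j \in I_{n-2}} U_{j} \cup \bigcup_{j \in I_{n-1} \setminus I_{n-2}} U_{j}$, this is equivalent to $\partial M_{i}$ meeting $\bigcup_{j \in I_{n-1} \setminus I_{n-2}} U_{j}$. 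The converse direction is the same chain of equivalences read backwards.

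Case $n = 2$. Here $I_{0} = \emptyset$, so the condition on $I_{n-2}$ is vacuous, and $i \notin I_{1}$ means $\partial M_{i} = \overline{\gamma_{i}}$. The argument is then the same.

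Role of the hypothesis $n \le N$. It is not logically needed for the equivalences; it only guarantees that $I_{n} \setminus I_{n-1}$ can be nonempty. I will note this rather than use it.

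I expect no real obstacle. The only care needed is the bookkeeping at the edge cases $n = 2$ and $I_{0} = \emptyset$, where $I_{1}$ is defined differently from the recursive sets.
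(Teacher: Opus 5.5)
Your proposal is correct and follows the same route as the paper, which says only that (1) follows by induction on $n$ and (2) follows from (1); your identity $I_{n} = I_{1} \cup S_{n}$, using $S_{n-1} \subseteq S_{n}$ from monotonicity, is exactly the content of that induction. Your side remark is also correct: the hypothesis $n \le N$ is not logically needed for the equivalence in (2).
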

\begin{proof}
The (1) follows by induction on $n$. The (2) follows from (1).
\end{proof}

It's ready to prove Proposition \ref{prop_winding}.
\begin{proof}[Proof of Proposition \ref{prop_winding}]
We have to show $i_{0} \in I_{N}$ if $1 \le i_{0} \le m$. Fix the $i_{0}$. In this proof, boundary and interior are in the sense of point-set topology.

Define $J_{1} = \{ i_{0} \}$. For $n>1$, define inductively that $D_{n-1} = \bigcup_{k=1}^{n-1} \bigcup_{j \in J_{k}} M_{j}$ and
\[
J_{n} = \{ j \mid \text{$U_{j}$ contains a boundary point of $D_{n-1}$ in $M$} \}.
\]
Since $M_{i} \setminus \partial M_{i}$ is open for each $i$, a boundary point of $D_{n}$ is contained in $\partial M_{j}$ for some $j \in \bigcup_{k=1}^{n} J_{k}$.

Clearly, $U_{j} \subset M_{j}$. Since $U_{j}$ is open, we further infer $U_{j}$ does not contain a boundary point of $D_{n-1}$ for each $j \in \bigcup_{k=1}^{n-1} J_{k}$. So these $J_{n}$'s mutually have no intersection. This implies that there exists an $n_{0}$ such that $J_{n_{0}} \ne \emptyset$ and $J_{n} = \emptyset$ for $n> n_{0}$.

We claim that $D_{n_{0}} = M$. Actually, $D_{n_{0}}$ has no boundary. Otherwise, since $\bigcup_{j=1}^{m} U_{j} = M$, we would have $J_{n_{0} +1} \ne \emptyset$, which is a contradiction. Clearly, $D_{n_{0}}$ is closed. It is also open because it has no boundary. Since $M$ is connected, $D_{n_{0}} = M$.

By Lemma \ref{lem_boundary_interior}, $\overline{\gamma_{j}} \cap \partial M$ has no relative interior in $\partial M$ for each $j$. On the other hand, $D_{n_{0}} = M$ and hence
\[
\partial M = \bigcup_{k=1}^{n_{0}} \bigcup_{j \in J_{k}} (\partial M_{j} \cap \partial M ).
\]
Since each $\partial M_{j} \cap \partial M$ is closed, we infer there exists an $n_{1} \le n_{0}$ such that $\partial M_{j_{1}} \cap \partial M$ has relative interior in $\partial M$ for some $j_{1} \in J_{n_{1}}$. Clearly, $\partial M_{j_{1}} \ne \overline{\gamma_{j_{1}}}$ and $j_{1} \in I_{1}$. By the definition of $J_{n_{1}}$, we know $U_{j_{1}}$ contains a boundary point $x_{0}$ of $D_{n_{1} -1}$. This $x_{0}$ must be a boundary point of $M_{j_{2}}$ for some $j_{2} \in J_{n_{2}}$ with $n_{2} < n_{1}$. Now $x_{0} \in \partial M_{j_{2}} \cap U_{j_{1}}$ and hence $\partial M_{j_{2}} \cap U_{j_{1}} \ne \emptyset$, we have $j_{2} \in I_{2}$. By an inductive argument, we get a decreasing sequence $n_{1} > n_{2} > \dots > n_{s} =1$ and indices $j_{1} \in J_{n_{1}} \cap I_{1}$, $j_{2} \in J_{n_{2}} \cap I_{2}$, \dots, $j_{s} \in J_{n_{s}} \cap I_{s}$. Here $J_{n_{s}} = J_{1}$ and, therefore, $j_{s} = i_{0}$. We eventually see $i_{0} \in I_{s} \subseteq I_{N}$, which completes the proof. (Note that it might be that $s >N$.)
\end{proof}

\begin{lemma}\label{lem_boundary_neighborhood}
The following hold:
\begin{enumerate}[(1)]
\item $\bigcup_{i \in I_{1}} U_{i}$ is an open neighborhood of $\partial M$.

\item If $M_{i} \cap \partial M \ne \emptyset$, then $i \in I_{2}$.
\end{enumerate}
\end{lemma}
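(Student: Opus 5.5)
For part (1), the plan is to show that every $x \in \partial M$ satisfies $\rho_{i}(x) > 0$ for some $i \in I_{1}$. Since each $U_{i}$ is open, the union $\bigcup_{i \in I_{1}} U_{i}$ is open, so this is enough. Fix $x \in \partial M$ and suppose $\rho_{j}(x) > 0$. By Assumption \ref{asp_decomposition}, $x \in \mathrm{supp}\,\rho_{j} \subset M_{j} \setminus \overline{\gamma_{j}}$. Since $(M_{j} \setminus \partial M_{j}) \cap \partial M = \emptyset$, we get $x \in \partial M_{j}$. Combined with $x \notin \overline{\gamma_{j}}$, this gives $x \in \partial M_{j} \setminus \overline{\gamma_{j}}$, hence $\partial M_{j} \ne \overline{\gamma_{j}}$ and $j \in I_{1}$.

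Thus every $j$ with $\rho_{j}(x) > 0$ lies in $I_{1}$. Because $\sum_{j} \rho_{j}(x) = 1$, at least one such $j$ exists, and so $x \in \bigcup_{i \in I_{1}} U_{i}$. This proves (1); the only step needing care is the identity $\partial M \cap M_{j} \subseteq \partial M_{j}$, already used in the proof of Lemma \ref{lem_winding_start}.

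For part (2), suppose $M_{i} \cap \partial M \ne \emptyset$. If $\partial M_{i} \ne \overline{\gamma_{i}}$, then $i \in I_{1} \subseteq I_{2}$ and we are done. Otherwise, pick $x \in M_{i} \cap \partial M$. As above, $x \in \partial M_{i}$, and by (1), $x \in \bigcup_{j \in I_{1}} U_{j}$. Therefore $\partial M_{i} \cap \bigl( \bigcup_{j \in I_{1}} U_{j} \bigr) \ne \emptyset$, and by the definition of $I_{2}$ (or Lemma \ref{lem_winding}(1)), $i \in I_{2}$.

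No serious obstacle is expected. The key point is that the support condition $\mathrm{supp}\,\rho_{j} \subset M_{j} \setminus \overline{\gamma_{j}}$ forces any index active at a point of $\partial M$ to have boundary not contained in $\overline{\gamma_{j}}$.
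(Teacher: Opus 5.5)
Your proof is correct and follows the paper's argument essentially verbatim. For (1) the paper also uses $U_{i} \subset M_{i} \setminus \overline{\gamma_{i}}$ to put every $i$ with $U_{i} \cap \partial M \ne \emptyset$ into $I_{1}$, then concludes from $\bigcup_{i} U_{i} = M$; for (2) it combines $\partial M_{i} \cap \partial M = M_{i} \cap \partial M$ with (1), and your case split there is harmless but unnecessary, since the second clause in the definition of $I_{2}$ applies directly.
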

\begin{proof}
(1). Since $\bigcup_{i \in I_{1}} U_{i}$ is open, it suffices to show $\partial M \subseteq \bigcup_{i \in I_{1}} U_{i}$. Note that $U_{i} \subset M_{i} \setminus \overline{\gamma_{i}}$. If $U_{i} \cap \partial M \ne \emptyset$, then $(M_{i} \setminus \overline{\gamma_{i}}) \cap \partial M \ne \emptyset$ and hence $i \in I_{1}$. Now the conclusion follows from the fact $\partial M \subseteq M = \bigcup_{i=1}^{m} U_{i}$.

(2). Since $\partial M_{i} \cap \partial M = M_{i} \cap \partial M$, this follows immediately from (1).
\end{proof}

\section{Basic Theory of Elliptic Equations}\label{sec_basic}
In this section, we recall some basic results on second-order linear elliptic equations on manifolds. For domains in  $\mathbb{R}^{d}$, these results are standard and can be readily found in many textbooks (see e.g.,~\cite{Gilbarg_Trudinger}). The extension to manifolds is also well-known, though explicit references are less commonly available in the literature. For the reader's convenience, we include a brief derivation of the manifold version from the Euclidean one.

\subsection{Maximum Principle}
The maximum principle plays a central role in the present work.

Let $\mathfrak{L}$ be the one in \eqref{eqn_operator}. Let $M'$ be a subdomain of $M$ (see $\S$\ref{subsec_subdomain} for the definition of a subdomain). Here $\partial M'$ is allowed to be empty, in that case, we necessarily have $M' = M$.
\begin{theorem}[Maximum Principle]\label{thm_maximum}
Suppose $v \in C^{2} (M' \setminus \partial M')$ and $\mathfrak{L} v \le 0$ (resp.~$\ge 0$). Then the following hold:
\begin{enumerate}[(1)]
\item (Strong Maximum Principle) The $v$ cannot attain its nonnegative maximum (resp. nonpositive minimum) in $M' \setminus \partial M'$ unless it is a constant.

\item (Weak Maximum Principle) If $\partial M' \ne \emptyset$ and $v \in C^{0} (M')$, then $\max\limits_{M'} v \le \max\limits_{\partial M'} v^{+}$ (resp.~$\min\limits_{M'} v \ge \min\limits_{\partial M'} v^{-}$), where $v^{+} := \max \{ v, 0 \}$ and $v^{-} := \min \{ v, 0 \}$.
\end{enumerate}
\end{theorem}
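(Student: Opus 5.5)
The plan is to reduce the statement to the classical Euclidean maximum principle (e.g.\ Gilbarg--Trudinger, Theorems 3.1 and 3.5) via local coordinates, and then globalize using compactness and connectedness. It suffices to treat the case $\mathfrak{L} v \le 0$; the case $\mathfrak{L} v \ge 0$ follows by applying it to $-v$, since $\mathfrak{L}(-v) = -\mathfrak{L} v$ and $\min v = -\max(-v)$, $(-v)^{+} = -v^{-}$. In any chart, \eqref{eqn_operator_local} exhibits $\mathfrak{L}$ as a uniformly elliptic operator on compact subsets, with smooth (hence locally bounded) coefficients and zeroth order coefficient $c \ge 0$ by Assumption \ref{asp_wellpose}. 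Thus the Euclidean theorems apply on every coordinate ball.

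For (1), let $v$ attain a nonnegative maximum $K = \max_{M'} v \ge 0$ at some $x_{0} \in M' \setminus \partial M'$. Set $S := \{ x \in M' \setminus \partial M' \mid v(x) = K \}$. This set is nonempty and relatively closed by continuity. To show it is open, take $x \in S$ and a coordinate ball $B$ around $x$ contained in $M' \setminus \partial M'$, which is possible since this set is open in $M$. The Euclidean strong maximum principle in the form valid for $c \ge 0$ with a nonnegative interior maximum (Gilbarg--Trudinger, Theorem 3.5) forces $v \equiv K$ on $B$. Hence $S$ is open. Since $M'$ is connected, its manifold interior $M' \setminus \partial M'$ is connected (a connected manifold with boundary has connected interior, by collar neighborhoods). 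Therefore $S = M' \setminus \partial M'$ and $v$ is constant there; the wording ``attain its maximum in $M' \setminus \partial M'$'' is read with the maximum over the interior.

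For (2), take $v \in C^{0}(M')$ with $\partial M' \neq \emptyset$. By compactness, $v$ attains $K := \max_{M'} v$. If $K \le 0$, then $K \le \max_{\partial M'} v^{+}$ trivially. If $K > 0$ and the maximum is attained only on $\partial M'$, we are done. Otherwise the maximum is attained at an interior point, and (1) gives $v \equiv K$ on $M' \setminus \partial M'$. This interior is dense in $M'$ because $\partial M' \neq \emptyset$ lies in the closure of the interior, so by continuity $v \equiv K$ on $M'$, in particular on $\partial M'$. Hence $K \le \max_{\partial M'} v^{+}$.

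The main obstacle I expect is the local step in (1): checking that the hypotheses of the Euclidean strong maximum principle (Hopf's form with $c \ge 0$ and a nonnegative maximum) hold in a chart, together with the topological fact that $M' \setminus \partial M'$ is connected and dense in $M'$. Both are routine but should be stated explicitly, since $M'$ is only a topologically embedded subdomain. The key observation is that all the analysis happens inside the open set $M' \setminus \partial M'$ of the smooth manifold $M$, so no regularity of $\partial M'$ is needed.
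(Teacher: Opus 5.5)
Your proposal is correct and follows essentially the same route as the paper. Both arguments reduce to the case $\mathfrak{L} v \le 0$, show that the set where the nonnegative maximum is attained is open (via a chart and Gilbarg--Trudinger Theorem~3.5) and closed in the connected interior $M' \setminus \partial M'$, and deduce (2) from (1) by compactness; your case split on the sign of the maximum and your use of the density of $M' \setminus \partial M'$ in $M'$ just spell out what the paper leaves implicit.
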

\begin{proof}
(1). Since $\mathfrak{L} v \le 0$ if and only if $\mathfrak{L} (-v) \ge 0$, it suffices to prove the case of $\mathfrak{L} v \le 0$.

Suppose $v$ attains its maximum $\eta \ge 0$. Let
\[
S := \{ p \in M' \setminus \partial M' \mid v(p) = \eta \}.
\]
Clearly, $S$ is relatively closed in $M' \setminus \partial M'$. Suppose $p_{0} \in S$. There is an open and connected neighborhood $\Omega$ of $p_{0}$ such that $\overline{\Omega} \subset \subset M' \setminus \partial M'$ and $\overline{\Omega}$ has coordinates. Now $\Omega$ is identified with an open domain of $\mathbb{R}^{d}$, and $\mathfrak{L}$ can be expressed as \eqref{eqn_operator_local}, which is a uniformly elliptic equation with $C^{\infty}$ coefficients. Since $c \ge 0$ and $\Omega$ is connected, by the strong maximum principle on Euclidean domains (\cite[Theorem~3.5]{Gilbarg_Trudinger}), we see $v \equiv \eta$ in $\Omega$. Thus $\Omega \subseteq S$ and $S$ is also open. Since $M' \setminus \partial M'$ is connected and $S$ is nonempty, we infer $M' \setminus \partial M' = S$ and the conclusion follows.

(2). By the assumption, $v$ attains its maximum and minimum on $M'$. The conclusion follows from (1).
\end{proof}

\begin{corollary}[Maximum Principle]\label{cor_maximum}
Suppose $v \in C^{2} (M' \setminus \partial M')$ and $\mathfrak{L} v = 0$. Then the following hold:
\begin{enumerate}[(1)]
\item (Strong Maximum Principle) The $v$ can neither attain its nonnegative maximum, nor attain its nonpositive minimum, in $M' \setminus \partial M'$ unless it is a constant.

\item (Weak Maximum Principle) If $\partial M' \ne \emptyset$ and $v \in C^{0} (M')$, then $\max\limits_{M'} v \le \max\limits_{\partial M'} v^{+}$, $\min\limits_{M'} v \ge \min\limits_{\partial M'} v^{-}$, and $\max\limits_{M'} |v| = \max\limits_{\partial M'} |v|$.
\end{enumerate}
\end{corollary}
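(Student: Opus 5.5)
We derive Corollary \ref{cor_maximum} directly from Theorem \ref{thm_maximum}, using that $\mathfrak{L} v = 0$ gives both $\mathfrak{L} v \le 0$ and $\mathfrak{L} v \ge 0$.

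For the strong maximum principle (1), apply part (1) of Theorem \ref{thm_maximum} twice. The hypothesis $\mathfrak{L} v \le 0$ shows that $v$ cannot attain a nonnegative maximum in $M' \setminus \partial M'$ unless it is constant. The hypothesis $\mathfrak{L} v \ge 0$ shows that $v$ cannot attain a nonpositive minimum there unless it is constant. Together these are exactly statement (1).

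For the weak maximum principle (2), part (2) of Theorem \ref{thm_maximum} applied with $\mathfrak{L} v \le 0$ gives $\max_{M'} v \le \max_{\partial M'} v^{+}$. Applied with $\mathfrak{L} v \ge 0$, it gives $\min_{M'} v \ge \min_{\partial M'} v^{-}$.

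The only step needing care is the identity $\max_{M'} |v| = \max_{\partial M'} |v|$. Since $\partial M' \subseteq M'$, the inequality $\max_{M'} |v| \ge \max_{\partial M'} |v|$ is immediate. For the reverse inequality, observe that $v^{+} \le |v|$ and $-v^{-} \le |v|$ pointwise. Hence
\[
\max_{M'} v \le \max_{\partial M'} v^{+} \le \max_{\partial M'} |v|,
\]
and
\[
-\min_{M'} v \le -\min_{\partial M'} v^{-} = \max_{\partial M'} (-v^{-}) \le \max_{\partial M'} |v|.
\]
Since $\max_{M'} |v| = \max\{\max_{M'} v,\, -\min_{M'} v\}$, these two bounds give $\max_{M'} |v| \le \max_{\partial M'} |v|$, which establishes equality.

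All maxima and minima exist because $M'$ and $\partial M'$ are compact and $v \in C^{0}(M')$. There is no genuine obstacle: the argument is a direct symmetrization of Theorem \ref{thm_maximum}.
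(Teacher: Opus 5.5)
Your proposal is correct and matches the paper's intent: the paper gives no separate proof, treating the corollary as immediate from Theorem~\ref{thm_maximum} applied once with $\mathfrak{L} v \le 0$ and once with $\mathfrak{L} v \ge 0$. Your derivation of $\max_{M'}|v| = \max_{\partial M'}|v|$ from the two one-sided bounds, using $v^{+} \le |v|$ and $-v^{-} \le |v|$, correctly supplies the one detail the paper leaves implicit.
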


\begin{corollary}[Comparison Principle]\label{cor_comparison}
Suppose $v_{i} \in C^{2} (M' \setminus \partial M') \cap C^{0} (M')$ for $i=1,2$ and $\partial M' \ne \emptyset$. If $\mathfrak{L} v_{1} \le \mathfrak{L} v_{2}$ and $v_{1}|_{\partial M'} \le v_{2}|_{\partial M'}$, then $v_{1} \le v_{2}$. If $v_{1}|_{\partial M'} \ne v_{2}|_{\partial M'}$ additionally, then $v_{1} < v_{2}$ in $M' \setminus \partial M'$.
\end{corollary}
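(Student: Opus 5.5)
Set $w := v_{2} - v_{1}$. By linearity of $\mathfrak{L}$, $w \in C^{2}(M' \setminus \partial M') \cap C^{0}(M')$, $\mathfrak{L} w = \mathfrak{L} v_{2} - \mathfrak{L} v_{1} \ge 0$ in $M' \setminus \partial M'$, and $w|_{\partial M'} \ge 0$. Both claims then reduce to statements about $w$, obtained from Theorem \ref{thm_maximum} applied in its ``resp.'' form ($\mathfrak{L} w \ge 0$).

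For the first claim I use the weak maximum principle, (2) of Theorem \ref{thm_maximum}. Since $\partial M' \ne \emptyset$ and $w \in C^{0}(M')$, we get $\min_{M'} w \ge \min_{\partial M'} w^{-}$. As $w \ge 0$ on $\partial M'$, we have $w^{-} = \min\{w,0\} = 0$ there, so $\min_{M'} w \ge 0$, i.e.\ $v_{1} \le v_{2}$.

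For the strict inequality, suppose $v_{1}|_{\partial M'} \ne v_{2}|_{\partial M'}$ and that $w(p) = 0$ for some $p \in M' \setminus \partial M'$. Since $w \ge 0$ on $M'$, the value $0$ is the minimum of $w$, and it is nonpositive. This minimum is attained at the point $p$ of $M' \setminus \partial M'$. By the strong maximum principle, (1) of Theorem \ref{thm_maximum}, $w$ is then constant on $M' \setminus \partial M'$, hence $w \equiv 0$ there.

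To reach a contradiction I need $w \equiv 0$ on $\partial M'$ as well, and this is the one delicate step. It follows once $\partial M'$ lies in the closure of $M' \setminus \partial M'$, for then continuity of $w$ on $M'$ gives $w \equiv 0$ on $\partial M'$, contradicting $v_{1}|_{\partial M'} \ne v_{2}|_{\partial M'}$. The closure property holds because every point of a manifold boundary is a limit of interior points, as is seen directly in a half-space chart $\mathbb{R}^{d}_{+}$. This fact was already used in the proof of Lemma \ref{lem_subdomain_interior}. Hence $w > 0$, i.e.\ $v_{1} < v_{2}$, in $M' \setminus \partial M'$.
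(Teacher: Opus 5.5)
Your proof is correct and follows the same route as the paper: the weak maximum principle gives $v_{1} \le v_{2}$, and the strong maximum principle rules out an interior zero of $v_{2} - v_{1}$ when the boundary values differ. Your continuity step, using that $\partial M'$ lies in the closure of $M' \setminus \partial M'$, is a detail the paper leaves implicit, and working with $v_{2} - v_{1}$ and the minimum form rather than $v_{1} - v_{2}$ and the maximum form is immaterial.
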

\begin{proof}
Apply the weak maximum principle \ref{thm_maximum} to $v_{1} - v_{2}$, we see $v_{1} \le v_{2}$. If additionally $v_{1}|_{\partial M'} \ne v_{2}|_{\partial M'}$, then $v_{1} - v_{2}$ cannot vanish in $M' \setminus \partial M'$. We obtain $v_{1} < v_{2}$ by the strong maximum principle.
\end{proof}

\subsection{Perron Solution}\label{subsec_Perron}
Let's recall the concept of Perron solution. Now let $M'$ be a subdomain of $M$ with boundary $\partial M'$. Let $\psi$ be a bounded function on $\partial M'$. Let $S_{\psi}$ be the set consisting of subfunctions with respect to $\psi$. Here by a subfunction $v$, we mean $v \in C^{0} (M')$ with $v|_{\partial M'} \le \psi$, and $v|_{M' \setminus \partial M'}$ is a subsolution to $\mathfrak{L} w=0$ (see e.g.,~\cite[p.~103]{Gilbarg_Trudinger}, \cite[Definition~2.2]{crandall_ishii_lions} and \cite[Definition~2.6]{qin_wang_wang}). Define $u_{\psi} := \sup_{v \in S_{\psi}} v$. We call $u_{\psi}$ the Perron solution to $\mathfrak{L} =0$ with boundary value $\psi$.

\begin{lemma}\label{lem_perron}
The following hold:
\begin{enumerate}[(1)]
\item The $u_{\psi}$ is a well-defined bounded function on $M'$. In $M' \setminus \partial M'$, we have $u_{\psi} \in C^{\infty}$ and $\mathfrak{L} u_{\psi} =0$.

\item If $\lambda_{1} \le 0$ and $\lambda_{2} \ge 0$ are constants such that $\lambda_{1} \le \psi$ (resp.~$\lambda_{2} \ge \psi$), then $\lambda_{1} \le u_{\psi}$ (resp.~$\lambda_{2} \ge u_{\psi}$), and $u_{\psi}$ cannot achieve $\lambda_{1}$ (resp.~$\lambda_{2}$) in $M' \setminus \partial M'$ unless it is a constant.

\item If $\lambda \ge 0$ is a constant, then $\lambda u_{\psi}$ is the Perron solution with boundary value $\lambda \psi$.

\item Suppose $\psi_{1} \le \psi_{2}$ on $\partial M'$. Then $u_{\psi_{1}} \le u_{\psi_{2}}$. Furthermore, if $u_{\psi_{1}}$ is not identical to $u_{\psi_{2}}$ in $M' \setminus \partial M'$, then $u_{\psi_{1}}|_{M' \setminus \partial M'} < u_{\psi_{2}}|_{M' \setminus \partial M'}$.
\end{enumerate}
\end{lemma}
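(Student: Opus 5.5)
We prove the four parts of Lemma \ref{lem_perron} by transplanting the standard Perron theory for uniformly elliptic operators on Euclidean domains (\cite[\S2.8, \S6.3]{Gilbarg_Trudinger}) to $M'$ through local charts. Throughout, $c\ge 0$, so constants $\lambda\ge 0$ are supersolutions and constants $\lambda\le 0$ are subsolutions of $\mathfrak{L}w=0$.

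\textbf{Part (1).} We first check that $S_\psi$ is nonempty and bounded above. Let $\lambda_1:=\min\{\inf\psi,0\}$. The constant $\lambda_1$ lies in $S_\psi$, since $\mathfrak{L}\lambda_1=c\lambda_1\le 0$.

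For an upper bound, let $v\in S_\psi$ and set $\lambda_2:=\max\{\sup\psi,0\}$. Then $v-\lambda_2$ is a continuous subsolution, because $\mathfrak{L}(v-\lambda_2)\le -c\lambda_2\le 0$, and it is nonpositive on $\partial M'$. The weak maximum principle, Theorem \ref{thm_maximum}(2), is stated for $C^2$ functions, so we use its viscosity/subsolution analogue. This analogue is proved by the same connectedness argument as Theorem \ref{thm_maximum}(1), with the Euclidean strong maximum principle for subsolutions invoked in charts. It gives $v\le\lambda_2$, so $u_\psi$ is well defined and $\lambda_1\le u_\psi\le\lambda_2$.

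For interior regularity, take a coordinate ball $B\subset\subset M'\setminus\partial M'$. Since the coefficients of $\mathfrak{L}$ are smooth and the chart ball is smooth, the Dirichlet problem on $B$ with continuous data is classically solvable (\cite[Theorem~6.13]{Gilbarg_Trudinger}). We then follow the argument of \cite[Theorem~2.12]{Gilbarg_Trudinger}:
\begin{itemize}
\item The harmonic lift of a subfunction on $B$ (solve on $B$, keep the subfunction outside $B$) is again a subfunction.
\item The maximum of two subfunctions is a subfunction.
\item Taking a sequence of subfunctions approximating $u_\psi$ at countably many points of $B$, lifting them, and using interior Schauder/Harnack compactness, we get $u_\psi=w$ on $B$ for a solution $w$ of $\mathfrak{L}w=0$.
\end{itemize}
Hence $u_\psi\in C^\infty$ and $\mathfrak{L}u_\psi=0$ in $M'\setminus\partial M'$. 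The only manifold-specific input is that every subsolution notion is local, so charts suffice.

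\textbf{Part (2).} The inequalities $\lambda_1\le u_\psi\le\lambda_2$ were obtained in part (1): the constant $\lambda_1$ is a subfunction, and the maximum principle gives the upper bound. The non-attainment statement follows from Corollary \ref{cor_maximum}(1) applied to the smooth solution $u_\psi$ on the connected set $M'\setminus\partial M'$. A nonnegative maximum $\lambda_2$ or a nonpositive minimum $\lambda_1$ attained in the interior forces $u_\psi$ to be constant.

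\textbf{Part (3).} Because $\mathfrak{L}$ is linear and $\lambda\ge 0$ preserves the sign of $\mathfrak{L}v$, the map $v\mapsto\lambda v$ sends $S_\psi$ into $S_{\lambda\psi}$. For $\lambda>0$ this map is a bijection, so taking suprema gives $u_{\lambda\psi}=\lambda u_\psi$.

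For $\lambda=0$ we must show $u_0=0$. The constant $0$ lies in $S_0$, so $u_0\ge 0$; part (2) with $\lambda_2=0$ gives $u_0\le 0$.

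\textbf{Part (4).} The inclusion $S_{\psi_1}\subseteq S_{\psi_2}$ gives $u_{\psi_1}\le u_{\psi_2}$. The difference $w:=u_{\psi_1}-u_{\psi_2}$ is smooth in the interior, satisfies $\mathfrak{L}w=0$, and has $w\le 0$ there. If $w$ vanishes at some interior point, then $w$ attains its nonnegative maximum $0$ in the interior. Corollary \ref{cor_maximum}(1) then makes $w$ constant, hence identically $0$. Contrapositively, if $u_{\psi_1}\not\equiv u_{\psi_2}$ in the interior, the inequality $u_{\psi_1}<u_{\psi_2}$ is strict there.

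The main obstacle is part (1): making the Perron machinery rigorous on a manifold. This needs the maximum principle for merely continuous subsolutions, and the lift construction on coordinate balls together with the compactness of solution families. I will carry out each of these in charts, citing the Euclidean results.
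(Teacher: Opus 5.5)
Your proposal is correct and follows the paper's proof essentially step by step. The shared steps are: a negative constant as a subfunction and a nonnegative constant as a comparison bound; reduction to the Euclidean Perron theory of Gilbarg--Trudinger in charts for interior smoothness; the bijection $v\mapsto\lambda v$ between $S_\psi$ and $S_{\lambda\psi}$ for (3); and the inclusion $S_{\psi_1}\subseteq S_{\psi_2}$ together with the strong maximum principle applied to $u_{\psi_1}-u_{\psi_2}$ for (4). Your explicit remark that the upper bound needs a maximum principle for merely continuous subsolutions is a useful clarification, not a different route: the paper uses this implicitly when it compares subfunctions with the supersolution $\lambda_2$.
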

\begin{proof}
(1). First of all, $S_{\psi} \ne \emptyset$ because $\psi$ is bounded. For instance, let $\mu <0$ be a constant such that $\mu < \inf \psi$. Then $v \equiv \mu$ belongs to $S_{\psi}$. The remaining argument is essentially local and can be reduced to the case of problems in Euclidean domains by taking local charts. It follows those of \cite[Theorems~6.11~\&~6.17]{Gilbarg_Trudinger}.

(2). By the assumption, $v \equiv \lambda_{1}$ is in $S_{\psi}$. Thus $\lambda_{1} \le u_{\psi}$. On the other hand, $v_{0} \equiv \lambda_{2}$ is a supersolution with $v_{0}|_{\partial M'} \ge \psi$. So $v \le \lambda_{2}$ for all $v \in S_{\psi}$. We infer $u_{\psi} \le \lambda_{2}$. The remaining statement follows from the strong maximum principle \ref{cor_maximum}.

(3). If $\lambda =0$, the conclusion is obviously true. If $\lambda >0$, then $S_{\lambda \psi} = \{ \lambda v \mid v \in  S_{\psi} \}$, the conclusion also follows.

(4). Since $\psi_{1} \le \psi_{2}$, we have $S_{\psi_{1}} \subseteq S_{\psi_{2}}$. Then $u_{\psi_{1}} \le u_{\psi_{2}}$. Since $\mathfrak{L} (u_{\psi_{1}} - u_{\psi_{2}}) =0$ in $M' \setminus \partial M'$, the last statement follows from the strong maximum principle \ref{cor_maximum}.
\end{proof}

\begin{lemma}\label{lem_perron_lipschitz}
Assume further $\partial M'$ is Lipschitz (see $\S$\ref{subsec_lipschitz}), the following hold:
\begin{enumerate}[(1)]
\item If $\psi$ is continuous at $x \in \partial M'$, then $u_{\psi}$ is continuous at $x$ and $u_{\psi} (x) = \psi (x)$.

\item If $\psi$ is continuous on $\partial M'$, then $u_{\psi}$ is the unique one in $C^{0} (M') \cap C^{2} (M' \setminus \partial M')$ such that $u_{\psi}|_{\partial M'} = \psi$ and $\mathfrak{L} u_{\psi} =0$ in $M' \setminus \partial M'$.
\end{enumerate}
\end{lemma}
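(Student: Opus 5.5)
The plan is to reduce both parts to the Euclidean theory by localizing at boundary points, using the Perron construction of Lemma \ref{lem_perron} together with a local barrier argument.

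For (1), the key point is that regularity of a boundary point for Perron's method is a local property that depends only on the existence of a local barrier. So we fix $x \in \partial M'$ and work in a chart $\phi \colon U \to W \times (-\delta,\delta)$ as in $\S$\ref{subsec_lipschitz}. In this chart $\phi(U \cap \partial M')$ is the graph of a Lipschitz function $l$, and $M' \cap U$ lies on one side of the graph. That $M' \cap U$ lies on one side follows from the topological embedding condition in $\S$\ref{subsec_subdomain}: $U \setminus \partial M'$ has two components, and $M' \setminus \partial M'$ is a component of $M \setminus \partial M'$, as in the proof of Lemma \ref{lem_subdomain_interior}. If $x \in \partial M$, we first pass to $\widehat{M}$, which has no boundary.

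A Lipschitz graph domain satisfies a uniform exterior cone condition at $x$. The truncated cone with vertex $x$, axis $-e_d$ and aperture determined by the Lipschitz constant of $l$ lies outside $M'$ near $x$. For the operator \eqref{eqn_operator_local}, which is uniformly elliptic with bounded smooth coefficients and $c \ge 0$, the exterior cone condition yields a local barrier at $x$. I would quote the classical construction, e.g. \cite[Theorem~6.13 and the remarks on exterior cone condition, Problem 6.3]{Gilbarg_Trudinger}, or Miller's barrier for cones. I expect this step to be the main obstacle: the textbook gives an exterior sphere barrier, and the cone barrier for non-divergence operators with variable coefficients requires the homogeneous-function construction $w = r^{\beta} f(\vartheta)$, adjusted by a small-radius argument to absorb lower-order terms.

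Once the local barrier $w$ is in hand, it is extended to a global barrier on $M'$ in the usual way, by setting $\min\{w, m_0\}$ for a suitable positive constant $m_0$ and taking the constant $m_0$ away from $U$. Then the standard argument of \cite[Lemma~6.12]{Gilbarg_Trudinger} applies. For $\epsilon>0$, the functions $\psi(x) - \epsilon - k w$ and $\psi(x) + \epsilon + k w$ are a subfunction and a supersolution respectively, for large $k$, using the boundedness of $\psi$ and its continuity at $x$. This uses $c\ge 0$, so that constants of the appropriate sign behave correctly; I would split $\psi(x)$ into its positive and negative parts as in Lemma \ref{lem_perron}(2). Hence $|u_\psi - \psi(x)| \le \epsilon + k w$ near $x$, which gives continuity at $x$ with $u_\psi(x)=\psi(x)$. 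Here the value of $u_\psi$ on $\partial M'$ itself is handled by the sup definition, since every subfunction is $\le \psi$ there.

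For (2), part (1) shows $u_\psi \in C^0(M')$ with $u_\psi|_{\partial M'} = \psi$, and Lemma \ref{lem_perron}(1) gives $u_\psi \in C^\infty$ and $\mathfrak{L}u_\psi=0$ in the interior. Uniqueness follows by applying the weak maximum principle (Corollary \ref{cor_maximum}) to the difference of two such solutions, which vanishes on $\partial M' \neq \emptyset$.
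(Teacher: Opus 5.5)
Your route is the paper's: localize in a chart (passing to $\widehat{M}$ at points of $\partial M$), note that a Lipschitz graph gives an exterior cone, take the barrier from \cite[Problem~6.3]{Gilbarg_Trudinger}, and get (2) from (1), Lemma \ref{lem_perron}(1) and the weak maximum principle. The paper's proof is just these citations, and your extra details (one-sidedness of $M'$ in the chart, truncating the local barrier by $m_0$) are fine.

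The one step that fails as written is the zeroth-order term. If the barrier is only a supersolution, $\mathfrak{L}w \ge 0$, then
\[
\mathfrak{L}\bigl(\psi(x)-\epsilon-kw\bigr) = c\,\bigl(\psi(x)-\epsilon\bigr) - k\,\mathfrak{L}w
\]
can be positive wherever $c>0$ and $\mathfrak{L}w$ is small. So when $\psi(x)>\epsilon$ this is not a subfunction for any $k$, and symmetrically the upper bound fails when $\psi(x)<-\epsilon$. Splitting $\psi(x)$ into positive and negative parts does not fix this. A positive constant is not a subsolution where $c>0$, so the lower bound in the case $\psi(x)>0$ remains uncovered.

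The missing ingredient is a strict local barrier, and your construction already provides one. For $w_0=r^{\beta}f(\vartheta)$ on $M'\cap B$, with $B$ a small ball about $x$, the principal part gives $\mathfrak{L}w_0\ge c_1 r^{\beta-2}$, hence $\mathfrak{L}w_0\ge 1$ after scaling.

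Now set $w=\min\{w_0,m_0\}$ on $M'\cap B$ and $w=m_0$ elsewhere. Choose $k\ge(|\psi(x)|+\epsilon)\max_{M'}c$ and $km_0\ge|\psi(x)|+\epsilon$, and also $k$ large enough for the boundary inequality. Then on $M'\cap B$,
\[
\psi(x)-\epsilon-kw=\max\{\psi(x)-\epsilon-kw_0,\ \psi(x)-\epsilon-km_0\}
\]
is a maximum of two subsolutions. The first is a subsolution by strictness. The second is a nonpositive constant, hence a subsolution since $c\ge 0$. Wherever $w=m_0$, in particular outside $B$, the function equals that nonpositive constant. Dually, $\psi(x)+\epsilon+kw$ is a supersolution.

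So near $x$ it is the strict inequality $\mathfrak{L}w_0\ge 1$ that absorbs $c\,(\psi(x)\mp\epsilon)$. The sign of constants matters only where the barrier has been truncated, and there large $k$ forces the right sign. With this change the rest of your argument goes through.
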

\begin{proof}
(1). It suffices to construct a barrier near $x$. The argument is also local, we may assume $M'$ is a domain in $\mathbb{R}^{d}$. Since $\partial M'$ is Lipschitz, it satisfies the exterior cone condition. By \cite[Problem~6.3]{Gilbarg_Trudinger}, the conclusion holds.

(2). The conclusion follows from (1) of this lemma, (1) of Lemma \ref{lem_perron} and the weak maximum principle \ref{cor_maximum}.
\end{proof}

\section{Convergence Rate}\label{sec_rate}
In this section, we prove Theorems \ref{thm_bound}, \ref{thm_comparison}, and \ref{thm_refine}.

\begin{lemma}\label{lem_bound_perron}
The $\theta_{i}$ in \eqref{eqn_bound_function} satisfies the following:
\begin{enumerate}[(1)]
\item $\theta_{i}|_{\partial M_{i} \setminus \overline{\gamma_{i}}} =0$ and $\theta_{i}|_{\gamma_{i}} =1$, and $\theta_{i}$ is continuous on
\[
(M_{i} \setminus \partial M_{i}) \cup \gamma_{i} \cup ({\partial M_{i} \setminus \overline{\gamma_{i}}}) = (M_{i} \setminus \overline{\gamma_{i}}) \cup \gamma_{i} = M_{i} \setminus (\overline{\gamma_{i}} \cap \partial M);
\]

\item $0 \le \theta_{i} \le 1$;

\item $\theta_{i}|_{M_{i} \setminus \partial M_{i}}>0$, and if $\theta_{i}|_{M_{i} \setminus \partial M_{i}}$ attains $1$, then $\theta_{i}|_{M_{i} \setminus \partial M_{i}} =1$;

\item if $\partial M_{i} \ne \overline{\gamma_{i}}$, then $\theta_{i} < 1$ in $M_{i} \setminus \overline{\gamma_{i}}$;

\item if in \eqref{eqn_operator}, $c>0$ somewhere in $M_{i}$, then $\theta_{i} < 1$ in $M_{i} \setminus \partial M_{i}$.
\end{enumerate}
\end{lemma}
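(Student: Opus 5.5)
The plan is to read each item off the Perron construction together with Lemmas \ref{lem_perron} and \ref{lem_perron_lipschitz}. Let $\psi_{i}$ be the boundary datum on $\partial M_{i}$ given by $\psi_{i}=1$ on $\overline{\gamma_{i}}$ and $\psi_{i}=0$ on $\partial M_{i}\setminus\overline{\gamma_{i}}$, so that $\theta_{i}=u_{\psi_{i}}$.

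For (1), I first record where $\psi_{i}$ is continuous. Since $\overline{\gamma_{i}}$ is closed and $\partial M_{i}\setminus\overline{\gamma_{i}}$ is relatively open in $\partial M_{i}$, the function $\psi_{i}$ is locally constant near every point of $\partial M_{i}\setminus\overline{\gamma_{i}}$. Near a point of $\gamma_{i}$, I will use that $\gamma_{i}=\partial M_{i}\setminus\partial M$ is relatively open in $\partial M_{i}$, because $\partial M$ is closed; hence $\psi_{i}\equiv1$ on a relative neighborhood of such a point. Lemma \ref{lem_perron_lipschitz}(1) then gives continuity of $\theta_{i}$ at these boundary points, with the stated boundary values. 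Continuity in $M_{i}\setminus\partial M_{i}$ comes from Lemma \ref{lem_perron}(1). The set identity follows from the decomposition $\partial M_{i}=\gamma_{i}\sqcup(\overline{\gamma_{i}}\cap\partial M)\sqcup(\partial M_{i}\setminus\overline{\gamma_{i}})$, which is a consequence of Lemma \ref{lem_subdomain_interior}.

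For (2) and (3), I apply Lemma \ref{lem_perron}(2) with $\lambda_{1}=0$ and $\lambda_{2}=1$. This gives $0\le\theta_{i}\le1$. It also shows that $\theta_{i}$ cannot attain $0$ or $1$ in the interior unless it is constant. A constant $0$ is impossible because $\gamma_{i}\ne\emptyset$, which holds since $M_{i}\ne M$ and $M$ is connected. Indeed, if $\theta_{i}\equiv0$ in the interior, then continuity at a point of $\gamma_{i}$, where $\theta_{i}=1$ and which lies in the closure of the interior, gives a contradiction. This yields $\theta_{i}>0$ in the interior, and the interior dichotomy for the value $1$.

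For (4), suppose $\partial M_{i}\ne\overline{\gamma_{i}}$ and $\theta_{i}$ attains $1$ at an interior point. Then $\theta_{i}\equiv1$ in the interior by (3). Pick $x\in\partial M_{i}\setminus\overline{\gamma_{i}}$. By (1), $\theta_{i}$ is continuous at $x$ with value $0$, yet $x$ is a limit of interior points, which is a contradiction. So $\theta_{i}<1$ in the interior, and on $\partial M_{i}\setminus\overline{\gamma_{i}}$ we have $\theta_{i}=0<1$. Together these cover $M_{i}\setminus\overline{\gamma_{i}}$.

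For (5), suppose $\theta_{i}\equiv1$ in the interior. Then $\mathfrak{L}\theta_{i}=c=0$ throughout $M_{i}\setminus\partial M_{i}$. By continuity of $c$, $c$ then vanishes on all of $M_{i}$, since the interior is dense in $M_{i}$. This contradicts the hypothesis, so $\theta_{i}<1$ in the interior by (3).

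The only step needing care is (1): the continuity of $\psi_{i}$ near points of $\gamma_{i}$ and of $\partial M_{i}\setminus\overline{\gamma_{i}}$, and hence the application of the barrier lemma. Everything else is a direct maximum-principle argument.
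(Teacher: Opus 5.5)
Your proposal is correct and follows the paper's proof: (1) uses the relative openness of $\gamma_{i}$ and $\partial M_{i}\setminus\overline{\gamma_{i}}$ in $\partial M_{i}$ together with Lemma \ref{lem_perron_lipschitz}; (2)--(3) use Lemma \ref{lem_perron}(2) with $\gamma_{i}\neq\emptyset$; (4) uses continuity at a point of $\partial M_{i}\setminus\overline{\gamma_{i}}$; and (5) uses $\mathfrak{L}1=c$. The only cosmetic difference is in (5), where you argue by contraposition, deducing $c\equiv 0$ on $M_{i}$ from the density of the interior, whereas the paper first picks an interior point where $c>0$.
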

\begin{proof}
(1). Both $\gamma_{i}$ and $\partial M_{i} \setminus \overline{\gamma_{i}}$ are relatively open in $\partial M_{i}$. So the prescribed boundary values in \eqref{eqn_bound_function} are continuous at each point in $\gamma_{i} \cup (\partial M_{i} \setminus \overline{\gamma_{i}})$. By Lemma \ref{lem_subdomain_interior}, we have $\overline{\gamma_{i}} \setminus \gamma_{i} = \overline{\gamma_{i}} \cap \partial M$ and hence
\[
\gamma_{i} \cup (\partial M_{i} \setminus \overline{\gamma_{i}}) = \partial M_{i} \setminus (\overline{\gamma_{i}} \setminus \gamma_{i}) = \partial M_{i} \setminus (\overline{\gamma_{i}} \cap \partial M).
\]
Since $\partial M_{i}$ is Lipschitz, the conclusion follows from Lemma \ref{lem_perron_lipschitz}.

(2) and (3). By the assumption, we know $\gamma_{i} \ne \emptyset$, otherwise we would have $M_{i} = M$. By (1), we have $\theta_{i}|_{\gamma_{i}} =1$ and $\theta_{i}$ is continuous at each point of $\gamma_{i}$. The conclusions follow from (2) of Lemma \ref{lem_perron}.

(4). By (1), we have $\theta_{i}|_{\partial M_{i} \setminus \overline{\gamma_{i}}} =0$ and $\theta_{i}$ is continuous at each point in $\partial M_{i} \setminus \overline{\gamma_{i}}$. The conclusion follows from (3).

(5). By the continuity of $c$, we have $c(x) >0$ for some $x \in M_{i} \setminus \partial M_{i}$. If $\theta_{i}$ attains $1$ in $M_{i} \setminus \partial M_{i}$, by (3), we would have $\theta_{i}|_{M_{i} \setminus \partial M_{i}} =1$. Then $\mathfrak{L} \theta_{i} (x) >0$, which is a contradiction.
\end{proof}

\begin{proof}[Proof of Theorem \ref{thm_bound}]
By (2) and (3) of Lemma \ref{lem_bound_perron}, we infer $0 < \Theta_{i} \le 1$ for each $i$, hence $0 < \Theta \le 1$.

If $\partial M_{i} \ne \overline{\gamma_{i}}$, by (4) of Lemma \ref{lem_bound_perron}, we infer $\theta_{i}|_{M_{i} \setminus \overline{\gamma_{i}}} <1$. Since $\mathrm{supp} \rho_{i}$ is compact, $\mathrm{supp} \rho_{i} \subset M_{i} \setminus \overline{\gamma_{i}}$ and $\theta_{i}|_{\mathrm{supp} \rho_{i}}$ is continuous, we infer $\Theta_{i} <1$. Suppose $c>0$ somewhere in $M_{i}$. By (5) of Lemma \ref{lem_bound_perron}, similarly, we obtain $\Theta_{i} <1$.

In summary, if $c>0$ somewhere in $M_{i}$ for all $M_{i}$ such that $\partial M_{i} = \overline{\gamma_{i}}$, then $\Theta_{j} <1$ for all $j$ and hence $\Theta <1$.
\end{proof}

In the following, let $\tilde{c}$ be the function in Theorem \ref{thm_comparison}. Define an operator
\[
\widetilde{\mathfrak{L}} v = - \Delta v + \langle \vec{b}, \nabla v \rangle + \tilde{c} v.
\]
on $M$. Suppose $\tilde{c} \ge 0$.
\begin{lemma}\label{lem_operator_comparison}
Suppose $M'$ is a subdomain of $M$. Suppose $\tilde{c} \ge c$. Let $\psi$ be a nonnegative bounded function on $\partial M'$. Let $u_{\psi}$ (resp.~$\tilde{u}_{\psi}$) be the Perron solution to $\mathfrak{L} =0$ (resp.~$\widetilde{\mathfrak{L}} = 0$) with boundary value $\psi$. Then $\tilde{u}_{\psi}|_{M' \setminus \partial M'} < u_{\psi}|_{M' \setminus \partial M'}$ or $\tilde{u}_{\psi}|_{M' \setminus \partial M'} = u_{\psi}|_{M' \setminus \partial M'}$.
\end{lemma}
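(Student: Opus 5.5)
The plan is to compare the two Perron families directly and then use the strong maximum principle in the interior. First I will show $\tilde{u}_{\psi} \le u_{\psi}$ on $M'$. Let $\tilde{v} \in \widetilde{S}_{\psi}$ be a subfunction for $\widetilde{\mathfrak{L}}$ with $\tilde v|_{\partial M'}\le\psi$. Its positive part $\tilde{v}^{+} = \max\{\tilde v,0\}$ is again a subfunction for $\widetilde{\mathfrak{L}}$, because $\tilde c\ge 0$ makes $0$ a solution and the maximum of two subsolutions is a subsolution. It also satisfies $\tilde v^+|_{\partial M'}\le \psi$, since $\psi \ge 0$. So $\tilde u_\psi = \sup_{\tilde v\in\widetilde S_\psi,\ \tilde v\ge 0}\tilde v$.

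Now take a nonnegative $\widetilde{\mathfrak L}$-subsolution $w$. Formally, $\mathfrak{L} w = \widetilde{\mathfrak{L}} w - (\tilde c - c) w \le 0$, because $(\tilde c-c)w\ge 0$. Hence $w$ is also an $\mathfrak L$-subsolution, so $w\in S_\psi$ and $w\le u_\psi$. Taking the supremum gives $\tilde u_\psi\le u_\psi$ on $M'$.

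The main obstacle is making the step ``$\widetilde{\mathfrak{L}} w \le 0$ and $w\ge 0$ imply $\mathfrak{L} w\le 0$'' rigorous for the nonsmooth subsolutions in the Perron definition. These are defined via comparison on balls (in the Gilbarg--Trudinger sense, \cite[p.~103]{Gilbarg_Trudinger}), so I will argue as follows. Fix a small coordinate ball $B\subset\subset M'\setminus\partial M'$ and let $h$ solve $\mathfrak L h=0$ in $B$ with $h=w$ on $\partial B$. Let $\tilde h$ solve $\widetilde{\mathfrak L}\tilde h=0$ in $B$ with the same boundary data. Since $w\ge 0$ on $\partial B$, the weak maximum principle (Corollary \ref{cor_maximum}) gives $\tilde h\ge 0$. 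Then $\mathfrak L\tilde h = -(\tilde c-c)\tilde h\le 0 = \mathfrak L h$, and by the comparison principle (Corollary \ref{cor_comparison}) $\tilde h\le h$ in $B$. Since $w$ is an $\widetilde{\mathfrak L}$-subfunction, $w\le\tilde h\le h$ in $B$. This is exactly the ball-comparison property for $\mathfrak L$, so $w$ is an $\mathfrak L$-subsolution. Alternatively, if one uses the viscosity definition \cite{crandall_ishii_lions}, the same sign argument applies at touching test functions: at a touching point the test function $\phi$ has value $w\ge0$, so $\mathfrak L\phi\le\widetilde{\mathfrak L}\phi\le 0$ there.

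Finally, set $z := u_\psi - \tilde u_\psi \ge 0$ on $M'\setminus\partial M'$. Both functions are $C^\infty$ there by Lemma \ref{lem_perron}(1), and $\tilde u_\psi\ge 0$ by Lemma \ref{lem_perron}(2). Then
\[
\mathfrak L z = \mathfrak L u_\psi - \widetilde{\mathfrak L}\tilde u_\psi + (\tilde c - c)\tilde u_\psi = (\tilde c-c)\tilde u_\psi \ge 0 .
\]
So $z$ is an $\mathfrak{L}$-supersolution with $z\ge 0$. If $z$ vanishes at some point of $M'\setminus\partial M'$, it attains its nonpositive minimum $0$ in the interior. The strong maximum principle, Theorem \ref{thm_maximum}(1) (the ``$\ge 0$'' case, on the connected set $M'\setminus\partial M'$), then forces $z$ to be constant, hence $z\equiv 0$. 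Otherwise $z>0$ everywhere in $M'\setminus\partial M'$. This gives the dichotomy $\tilde u_\psi< u_\psi$ or $\tilde u_\psi=u_\psi$ on $M'\setminus\partial M'$.
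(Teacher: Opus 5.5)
Your proof is correct and follows essentially the same route as the paper: you restrict to nonnegative subfunctions, show by ball comparison ($w \le \tilde h \le h$) that a nonnegative $\widetilde{\mathfrak{L}}$-subfunction is an $\mathfrak{L}$-subfunction, and then apply the strong maximum principle to $u_{\psi} - \tilde{u}_{\psi}$, using $\mathfrak{L}(u_{\psi} - \tilde{u}_{\psi}) = (\tilde c - c)\tilde{u}_{\psi} \ge 0$. You also make explicit one step the paper leaves implicit, namely that the positive part of a subfunction is again a subfunction.
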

\begin{proof}
Let $S$ (resp.~$\tilde{S}$) be the set consisting of subfunctions of $\mathfrak{L} =0$ (resp.~$\widetilde{\mathfrak{L}} = 0$) with boundary value $\psi$ (see $\S$\ref{subsec_Perron}). Since $\psi \ge 0$ and $\tilde{c} \ge c \ge 0$, we have $0 \in S$ and $0 \in \tilde{S}$. Thus $u_{\psi} = \sup_{v \in S^{+}} v$ and $\tilde{u}_{\psi} = \sup_{v \in \tilde{S}^{+}} v$, where $S^{+} = \{ v \ge 0 \mid v \in S \}$ and $\tilde{S}^{+} = \{ v \ge 0 \mid v \in \tilde{S} \}$.

We first prove $u_{\psi} \ge \tilde{u}_{\psi}$. It suffices to show that $\tilde{S}^{+} \subseteq S^{+}$. Suppose $v \in \tilde{S}^{+}$. Let $x$ be a point in $M' \setminus \partial M'$. Suppose $B$ is a neighborhood of $x$ in $M' \setminus \partial M'$ such that it is identified with a closed ball with center $x$ in a local coordinate chart. Let $\tilde{w}$ be the function on $B$ which solves the problem $\widetilde{\mathfrak{L}} \tilde{w} =0$ and $\tilde{w}|_{\partial B} =v$. Then $0 \le v \le \tilde{w}$ on $B$ (see \cite[p.~102]{Gilbarg_Trudinger}). Since $\tilde{c} \ge c$, we infer $\mathfrak{L} \tilde{w} \le 0$. Let $w$ be the function on $B$ which solves the problem $\mathfrak{L} w =0$ and $w|_{\partial B} =v$. By Corollary \ref{cor_comparison}, we see $w \ge \tilde{w}$, and then $w \ge v$ on $B$. We further infer $v \in S^{+}$ (see \cite[p.~102]{Gilbarg_Trudinger}). Thus $\tilde{S}^{+} \subseteq S^{+}$ and $0 \le \tilde{u}_{\psi} \le u_{\psi}$.

In $M' \setminus \partial M'$, we have $\mathfrak{L} u_{\psi}=0$, $\widetilde{\mathfrak{L}} \tilde{u}_{\psi} =0$ and $\tilde{u}_{\psi} - u_{\psi} \le 0$. Since $\tilde{u}_{\psi} \ge 0$ and $\tilde{c} \ge c$, we infer $\mathfrak{L} \tilde{u}_{\psi} \le 0$ and then $\mathfrak{L} (\tilde{u}_{\psi} - u_{\psi}) \le 0$. The proof is finished by the strong maximum principle \ref{thm_maximum}, .
\end{proof}

\begin{proof}[Proof of Theorem \ref{thm_comparison}]
For brevity, when we compare $(c, \mathcal{D}, \rho)$ with $(\tilde{c}, \mathcal{D}, \rho)$, let $\Theta (c)$ (resp.~$\Theta (\tilde{c})$) denote $\Theta (c, \mathcal{D}, \rho)$ (resp.~$\Theta (\tilde{c}, \mathcal{D}, \rho)$), let $\Theta_{i} (c)$ (resp.~$\Theta_{i} (\tilde{c})$) denote $\Theta_{i} (c, \mathcal{D}, \rho)$ (resp.~$\Theta_{i} (\tilde{c}, \mathcal{D}, \rho)$), let $\theta_{i} (c)$ (resp.~$\theta_{i} (\tilde{c})$) denote $\theta_{i} (c, \mathcal{D}, \rho)$ (resp.~$\theta_{i} (\tilde{c}, \mathcal{D}, \rho)$).

Similar abbreviations are used for comparing $(c, \mathcal{D}, \rho)$ with $(c, \widetilde{\mathcal{D}}, \rho)$.

(1). Recall that $\theta_{i}$ is a function, and $\Theta_{i} = \max \theta_{i}|_{\mathrm{supp} \rho_{i}}$. Since $\theta_{i} =0$ on $\mathrm{supp} \rho_{i} \cap \partial M_{i}$, we also have $\Theta_{i} = \max \theta_{i}|_{\mathrm{supp} \rho_{i} \setminus \partial M_{i}}$.

Since $\tilde{c} \ge c$, by Lemma \ref{lem_operator_comparison}, we have $\theta_{i} (\tilde{c}) \le \theta_{i} (c)$ in $M_{i} \setminus \partial M_{i}$ for all $i$. Thus $\Theta_{i} (\tilde{c}) \le \Theta_{i} (c)$ for all $i$ and hence $\Theta (\tilde{c}) \le \Theta (c)$.

(2). It suffices to show $\theta_{i} (\tilde{c}) < \theta_{i} (c)$ in $M_{i} \setminus \partial M_{i}$ for all $i$. If not for some $i$, then by Lemma \ref{lem_operator_comparison}, we would have $\theta_{i} (\tilde{c}) = \theta_{i} (c)$ in $M_{i} \setminus \partial M_{i}$. On the other hand, by the assumption and the continuity of $\tilde{c}$ and $c$, we have $\tilde{c} (x) > c (x)$ for some $x \in M_{i} \setminus \partial M_{i}$. By (3) of Lemma \ref{lem_bound_perron} (replacing $c$ with $\tilde{c}$), $\theta_{i} (\tilde{c})|_{M_{i} \setminus \partial M_{i}} >0$. Then we would have
\[
\widetilde{\mathfrak{L}} \theta_{i} (\tilde{c}) (x) > \mathfrak{L} \theta_{i} (\tilde{c}) (x) = \mathfrak{L} \theta_{i} (c) (x) =0,
\]
which is a contradiction. (Here $\theta_{i} (\tilde{c}) (x)$ and $\theta_{i} (c) (x)$ are the values of $\theta_{i} (\tilde{c})$ and $\theta_{i} (c)$ at $x$, respectively. )

(3). We first prove, $\forall x \in M_{i} \setminus \partial M_{i}$, we have $\lim\limits_{c \rightarrow +\infty} \theta_{i} (c) (x) =0$.

Choosing a local coordinate chart, we may assume $x$ is identified with $0 \in \mathbb{R}^{d}$, and a neighborhood of $x$ in this chart is identified with the closed unit ball $B$ in $\mathbb{R}^{d}$. Let $\alpha$ be a positive constant. Define $w(\xi_{1}, \dots, \xi_{d}) = e^{\alpha r^{2} - \alpha}$, where $r = (\sum_{i=1}^{d} \xi_{i}^{2})^{\frac{1}{2}}$ and $(\xi_{1}, \dots, \xi_{d})$ are the coordinates of $\mathbb{R}^{d}$. Then $w>0$ and $w|_{\partial B} =1$. Since
\[
- \Delta w + \langle \vec{b}, \nabla w \rangle
\]
is continuous and hence bounded on $B$, and since $w$ is continuous and positive, we infer there exists a constant $c_{0} >0$, $\forall c> c_{0}$, we have
\[
\mathfrak{L} w = - \Delta w + \langle \vec{b}, \nabla w \rangle + cw >0
\]
on $B$. Note that $\mathfrak{L} \theta_{i} (c) =0$. By (5) of Lemma \ref{lem_bound_perron}, we see $\theta_{i} (c)|_{\partial B} <1$ since $c>0$. So $\mathfrak{L} (\theta_{i} (b) -w) < 0$ on $B$, and $\theta_{i} (b) -w <0$ on $\partial B$. By Corollary \ref{cor_comparison}, we have $\theta_{i} (c) <w$ on $B$. Particularly, $\theta_{i} (c) (x) < w(0) = e^{-\alpha}$.

In summary, $\forall \alpha >0$, $\exists c_{0} \in (0,+\infty)$, $\forall c> c_{0}$, we have $\theta_{i} (c) (x) < e^{-\alpha}$. So $\lim\limits_{c \rightarrow +\infty} \theta_{i} (c) (x) =0$.

We also know that $\theta_{i} (c)|_{\partial M_{i} \setminus \overline{\gamma_{i}}} =0$. So $\theta_{i} (c)$ converges to $0$ in $M_{i} \setminus \overline{\gamma_{i}}$ when $c \rightarrow +\infty$. By the proof of (1), this convergence is decreasing. By (1) of Lemma \ref{lem_bound_perron}, $\theta_{i} (c)$ is continuous on $M_{i} \setminus \overline{\gamma_{i}}$.  Since $\mathrm{supp} \rho_{i} \subset M_{i} \setminus \overline{\gamma_{i}}$ and $\mathrm{supp} \rho_{i}$ is compact, by Dini's Theorem, the convergence of $\theta_{i} (c)$ is uniform on $\mathrm{supp} \rho_{i}$. Thus $\Theta_{i} (c)$ and hence $\Theta (c)$ converge to $0$ when $c \rightarrow +\infty$.

(4). We only need to show $\theta_{i} (\widetilde{\mathcal{D}})|_{M_{i}} \le \theta_{i} (\mathcal{D})$ for all $i$.

Let $S(\mathcal{D})$ and $S(\widetilde{\mathcal{D}})$ be the sets of subfunctions defining $\theta_{i} (\mathcal{D})$ and $\theta_{i} (\widetilde{\mathcal{D}})$, respectively (see $\S$\ref{subsec_Perron}). It suffices to show $\forall v \in S(\widetilde{\mathcal{D}})$, we have $v|_{M_{i}} \in S(\mathcal{D})$.

Clearly, $v|_{M_{i}} \in S(\mathcal{D})$ is a subsolution to $\mathfrak{L} =0$. We also know $0 \le \theta_{i} (\widetilde{\mathcal{D}}) \le 1$. Since $\widetilde{M}_{i} \supseteq M_{i}$, we have $\theta_{i} (\widetilde{\mathcal{D}})|_{\overline{\gamma_{i}}} \le 1$. By Lemma \ref{lem_subdomain_interior}, we have $\partial M_{i} \setminus \overline{\gamma_{i}} \subseteq \partial \widetilde{M}_{i} \setminus \overline{\widetilde{\gamma}_{i}}$. We also have $\theta_{i} (\widetilde{\mathcal{D}})|_{\partial M_{i} \setminus \overline{\gamma_{i}}} =0$. Since $v \le \theta_{i} (\widetilde{\mathcal{D}})$, we further infer $v|_{M_{i}} \in S(\mathcal{D})$.

(5). By (1) of Lemma \ref{lem_bound_perron} (replacing $\mathcal{D}$ with $\widetilde{\mathcal{D}}$) and the fact $\widetilde{M}_{i} \setminus \overline{\widetilde{\gamma}_{i}} \supseteq M_{i}$, we know $\theta_{i} (\widetilde{\mathcal{D}})|_{M_{i}}$ is a continuous Perron solution on $M_{i}$.

By the proof of (4), we know $\theta_{i} (\widetilde{\mathcal{D}})|_{M_{i}} \le \theta_{i} (\mathcal{D})$. Since $\Theta (\mathcal{D}) <1$, we infer $\theta_{i} (\widetilde{\mathcal{D}}) \le \theta_{i} (\mathcal{D}) <1$ on $\mathrm{supp} \rho_{i}$. By (3) of Lemma \ref{lem_bound_perron} (replacing $\mathcal{D}$ with $\widetilde{\mathcal{D}}$), we further infer $\theta_{i} (\widetilde{\mathcal{D}}) <1$ in $\widetilde{M}_{i} \setminus \overline{\widetilde{\gamma}_{i}}$. Thus $\theta_{i} (\widetilde{\mathcal{D}}) \le \theta_{i} (\mathcal{D})$ on $\partial M_{i}$ and $\theta_{i} (\widetilde{\mathcal{D}}) \ne \theta_{i} (\mathcal{D})$ on $\gamma_{i}$.

By (1) of Lemma \ref{lem_bound_perron} again, $\theta_{i} (\mathcal{D})$ is also continuous at each point in $\gamma_{i}$. By the strong maximum principle \ref{cor_maximum}, $\theta_{i} (\widetilde{\mathcal{D}}) < \theta_{i} (\mathcal{D})$ in $M_{i} \setminus \partial M_{i}$ and hence $\Theta_{i} (\widetilde{\mathcal{D}}) < \Theta_{i} (\mathcal{D})$, which implies $\Theta (\widetilde{\mathcal{D}}) < \Theta (\mathcal{D})$.
\end{proof}

\begin{proof}[Proof of Theorem \ref{thm_refine}]
(1). By the definition of $\Theta (\mathcal{D}, \rho)$, there exist an $M_{i} \in \mathcal{D}$ and an $x \in \mathrm{supp} \rho_{i}$ such that $\Theta (\mathcal{D}, \rho) = \theta_{i} (\mathcal{D}, \rho) (x)$. Since $(\widetilde{\mathcal{D}}, \tilde{\rho})$ is a refinement, there exists an $\widetilde{M}_{j} \in \widetilde{\mathcal{D}}$ such that $\widetilde{M}_{j} \subseteq M_{i}$ and $x \in \mathrm{supp} \tilde{\rho}_{j}$.

Since $\widetilde{M}_{j} \subseteq M_{i}$, as can be seen in the proof of the (4) in Theorem \ref{thm_comparison}, we have $\theta_{i} (\mathcal{D}, \rho)|_{\widetilde{M}_{j}} \le \theta_{j} (\widetilde{\mathcal{D}}, \tilde{\rho})$. (Notice that we need to take $\widetilde{M}_{i}$ and $M_{i}$ there as $M_{i}$ and $\widetilde{M}_{j}$ here, respectively.) Thus
\[
\Theta (\mathcal{D}, \rho) = \theta_{i} (\mathcal{D}, \rho) (x) \le \theta_{j} (\widetilde{\mathcal{D}}, \tilde{\rho}) (x) \le \Theta_{j} (\widetilde{\mathcal{D}}, \tilde{\rho}) \le \Theta (\widetilde{\mathcal{D}}, \tilde{\rho}).
\]

(2). Again, $\Theta (\mathcal{D}, \rho) = \theta_{i} (\mathcal{D}, \rho) (x)$ for some $x \in \mathrm{supp} \rho_{i} \setminus \partial M$. Since $(\widetilde{\mathcal{D}}, \tilde{\rho})$ is a strict refinement, there exists an $\widetilde{M}_{j} \in \widetilde{\mathcal{D}}$ such that $\widetilde{M}_{j} \subsetneqq M_{i}$ and $x \in \mathrm{supp} \tilde{\rho}_{j} \setminus \partial M$.

We claim $\partial \widetilde{M}_{j} \cap (M_{i} \setminus \partial M_{i}) \ne \emptyset$. Otherwise, $\widetilde{M}_{j} \setminus \partial \widetilde{M}_{j}$ would be relatively both open and closed in $M_{i} \setminus \partial M_{i}$. Since $M_{i} \setminus \partial M_{i}$ is connected, we would have $\widetilde{M}_{j} \setminus \partial \widetilde{M}_{j} = M_{i} \setminus \partial M_{i}$ and hence $\widetilde{M}_{j} = M_{i}$, which is a contradiction.

Now we can choose a
\[
y \in \widetilde{\gamma}_{j} \cap (M_{i} \setminus \partial M_{i}) = \partial \widetilde{M}_{j} \cap (M_{i} \setminus \partial M_{i}).
\]
By the assumption, $\Theta (\mathcal{D}, \rho) < 1$. So $\theta_{i} (\mathcal{D}, \rho) <1$ in $\mathrm{supp} \rho_{i}$. By (3) of Lemma \ref{lem_bound_perron}, we know $\theta_{i} (\mathcal{D}, \rho) <1$ in $M_{i} \setminus \partial M_{i}$ and hence $\theta_{i} (\mathcal{D}, \rho) (y) <1$.

We already know $\theta_{i} (\mathcal{D}, \rho)|_{\widetilde{M}_{j}} \le \theta_{j} (\widetilde{\mathcal{D}}, \tilde{\rho})$. By (1) of Lemma \ref{lem_bound_perron}, both $\theta_{i} (\mathcal{D}, \rho)$ and $\theta_{j} (\widetilde{\mathcal{D}}, \tilde{\rho})$ are continuous at $y$, and $\theta_{i} (\mathcal{D}, \rho) (y) <1 = \theta_{j} (\widetilde{\mathcal{D}}, \tilde{\rho}) (y)$. By the strong maximum principle \ref{cor_maximum} again, we have $\theta_{i} (\mathcal{D}, \rho) < \theta_{j} (\widetilde{\mathcal{D}}, \tilde{\rho})$ in $\widetilde{M}_{j} \setminus \partial \widetilde{M}_{j}$. Since
\[
x \in \mathrm{supp} \tilde{\rho}_{j} \setminus \partial M \subseteq \widetilde{M}_{j} \setminus \partial \widetilde{M}_{j},
\]
we have $\theta_{i} (\mathcal{D}, \rho) (x) < \theta_{j} (\widetilde{\mathcal{D}}, \tilde{\rho}) (x)$ and $\Theta (\mathcal{D}, \rho) < \Theta (\widetilde{\mathcal{D}}, \tilde{\rho})$.
\end{proof}

\section{$C^{0}$-Convergence}\label{sec_convergence}
We prove Theorems \ref{thm_rate}, \ref{thm_rate_boundary}, and \ref{thm_slow} in this section.

\subsection{First Step}
Let's consider Theorem \ref{thm_rate} first.

Define $C^{0}_{0} (M) := \{ v \in C^{0} (M) \mid v|_{\partial M} =0 \}$ as usual, where
$C^{0}_{0} (M) = C^{0} (M)$ if $\partial M = \emptyset$. Define a map $T \colon C^{0}_{0} (M) \rightarrow C^{0}_{0} (M)$ as follows. Suppose $v \in C^{0}_{0} (M)$. For each subdomain $M_{i}$, solve the problem
\begin{equation}\label{eqn_error_operator}
\left\{
\begin{aligned}
\mathfrak{L} w_{i}  & = 0, & \text{in $M_{i} \setminus \partial M_{i}$;}
\\
w_{i} & = v (x), & \text{on $\partial M_{i}$.}
\end{aligned}
\right.
\end{equation}
Define $Tv := w = \sum_{i=1}^{m} \rho_{i} w_{i}$.

\begin{lemma}\label{lem_error_operator}
The map $v \mapsto Tv$ is a well-defined linear map $T \colon C^{0}_{0} (M) \rightarrow C^{0}_{0} (M)$.
\end{lemma}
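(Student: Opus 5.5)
The argument has three parts: existence and uniqueness of each $w_{i}$, continuity of $Tv$ on all of $M$ together with $Tv|_{\partial M}=0$, and linearity.

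\emph{Existence and uniqueness.} Fix $v \in C^{0}_{0}(M)$. For each $i$, the boundary datum $v|_{\partial M_{i}}$ is continuous, since $\partial M_{i}\subseteq M$ and $v$ is continuous on $M$. Because $\partial M_{i}$ is Lipschitz by Assumption \ref{asp_decomposition}, (2) of Lemma \ref{lem_perron_lipschitz} shows that the Perron solution $w_{i}:=u_{v|_{\partial M_{i}}}$ has the following properties:
\begin{itemize}
\item it is the unique function in $C^{0}(M_{i})\cap C^{2}(M_{i}\setminus\partial M_{i})$ solving \eqref{eqn_error_operator};
\item it satisfies $w_{i}|_{\partial M_{i}}=v|_{\partial M_{i}}$.
\end{itemize}
Uniqueness in the class of continuous distributional solutions follows exactly as in Lemma \ref{lem_wellpose}. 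The difference of two such solutions is smooth in the interior by hypoellipticity \cite[Theorem~7.4.1]{Hormander} and vanishes on $\partial M_{i}\neq\emptyset$ (recall $M_{i}\neq M$). The weak maximum principle \ref{cor_maximum} then forces the difference to be zero. Hence $w_{i}$ is well defined.

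\emph{Continuity of $Tv$ and its boundary values.} Extend $\rho_{i}w_{i}$ by $0$ outside $M_{i}$. This is the step needing real care, since $w_{i}$ is defined only on $M_{i}$. I will show that $\rho_{i}w_{i}$ is continuous on $M$ by checking two open sets that cover $M$.
\begin{itemize}
\item On $M_{i}\setminus\overline{\gamma_{i}}$: by Lemma \ref{lem_subdomain_interior} this set is open in $M$, and the function is a product of continuous functions there.
\item On $M\setminus \mathrm{supp}\rho_{i}$: this set is open and the function vanishes identically there.
\end{itemize}
These two sets cover $M$ because $\mathrm{supp}\rho_{i}\subset M_{i}\setminus\overline{\gamma_{i}}$. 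Two continuous functions that agree on the overlap glue to a continuous function, so $Tv=\sum_{i}\rho_{i}w_{i}\in C^{0}(M)$.

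For the boundary values, take $x\in\partial M$. For each $i$, either $x\notin M_{i}$, in which case $\rho_{i}(x)=0$. Or $x\in M_{i}\cap\partial M$, and then $x\in\partial M_{i}$, because $M_{i}\setminus\partial M_{i}$ is open in $M$ and hence avoids $\partial M$. In the second case $w_{i}(x)=v(x)=0$. Hence $Tv(x)=0$, and $Tv\in C^{0}_{0}(M)$.

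\emph{Linearity.} For $v,v'\in C^{0}_{0}(M)$ and scalars $a,b$, the function $aw_{i}+bw_{i}'$ is continuous on $M_{i}$, $C^{2}$ in the interior, annihilated by $\mathfrak{L}$, and has boundary value $av+bv'$. By the uniqueness just established, it equals the solution for $av+bv'$. Summing against $\rho_{i}$ gives $T(av+bv')=aTv+bTv'$.
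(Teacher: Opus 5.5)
Your proposal is correct and follows the paper's proof essentially step for step. As in the paper, you obtain $w_i$ from Lemma~\ref{lem_perron_lipschitz} via the Lipschitz boundary, get continuity of $\rho_i w_i$ from the open cover $\{M_i\setminus\overline{\gamma_i},\, M\setminus\mathrm{supp}\,\rho_i\}$ supplied by Lemma~\ref{lem_subdomain_interior}, and deduce linearity from uniqueness; you only spell out points the paper leaves implicit, such as $M_i\cap\partial M\subseteq\partial M_i$ giving $w_i|_{\partial M}=0$, and uniqueness among continuous distributional solutions.
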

\begin{proof}
Since $v|_{\partial M_{i}}$ is continuous and $\partial M_{i}$ is Lipschitz, by Lemma \ref{lem_perron_lipschitz}, the $w_{i}$ is a well-defined element in $C^{0} (M_{i})$. Since $v|_{\partial M} =0$, we see $w_{i}|_{\partial M} =0$ too. So $(Tv)|_{\partial M} =0$. The linearity of $\mathfrak{L}$ implies that of the map $v \mapsto w_{i}$. Thus $T$ is linear.

It remains to prove $Tv$ is continuous in $M$. To do so, we only need to prove the continuity of $\rho_{i} w_{i}$. By Lemma \ref{lem_subdomain_interior}, we know $M_{i} \setminus \overline{\gamma_{i}}$ and $M \setminus \mathrm{supp} \rho_{i}$ form an open cover of $M$. In $M_{i} \setminus \overline{\gamma_{i}}$, both $w_{i}$ and $\rho_{i}$ are continuous, thus so is $\rho_{i} w_{i}$. In $M \setminus \mathrm{supp} \rho_{i}$, by definition, $\rho_{i} w_{i} =0$, which is obviously continuous. In summary, $\rho_{i} w_{i}$ in continuous in $M$.
\end{proof}

The importance of $T$ lies in the following proposition.
\begin{proposition}\label{prop_error_operator}
Let $u^{0} \in C^{0} (M)$ be an arbitrary initial guess in Algorithm \ref{alg_continuous_parallel}, let $u^{n}$ be the one generated by $u^{0}$ there. Then, $\forall n > 0$, we have $(u^{n} - u) \in C^{0}_{0} (M)$, $u^{n} - u = T (u^{n-1} - u)$ and $u^{n} - u = T^{n} (u^{0} - u)$.
\end{proposition}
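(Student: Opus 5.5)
The plan is to prove the one-step identity $u^{n}-u = T(u^{n-1}-u)$ by induction on $n$. The formula $u^{n}-u = T^{n}(u^{0}-u)$ then follows by iterating. Membership in $C^{0}_{0}(M)$ comes along with the induction.

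For the base of the induction, Lemma \ref{lem_algorithm_wellpose} gives $u^{n-1}\in C^{0}(M)$ with $u^{n-1}|_{\partial M}=\varphi$. Since $u\in C^{0}(M)$ with $u|_{\partial M}=\varphi$, we get $v:=u^{n-1}-u\in C^{0}_{0}(M)$. (For $n=1$ this uses the hypothesis $u^{0}|_{\partial M}=\varphi$ from Step 1 of Algorithm \ref{alg_continuous_parallel}.) Hence $Tv$ is defined by Lemma \ref{lem_error_operator}.

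The core step is to fix $i$ and set $w_{i}:=u^{n}_{i}-u|_{M_{i}}$. In $M_{i}\setminus\partial M_{i}$ we have $\mathfrak{L}w_{i}=f-f=0$ in the distributional sense. By \cite[Theorem~7.4.1]{Hormander}, as in Lemma \ref{lem_wellpose}, $w_{i}$ is $C^{\infty}$ there, so the equation holds classically. Also $w_{i}\in C^{0}(M_{i})$, because $u^{n}_{i}\in C^{0}(M_{i})$ by Lemma \ref{lem_algorithm_wellpose}. On $\partial M_{i}$ we have $w_{i}=u^{n-1}-u=v$. By the uniqueness in (2) of Lemma \ref{lem_perron_lipschitz}, which relies on $\partial M_{i}$ being Lipschitz, $w_{i}$ coincides with the solution of \eqref{eqn_error_operator} with data $v$.

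Next, since $\sum_{i}\rho_{i}=1$ and each $\rho_{i}$ vanishes off $M_{i}$, we can write
\[
u^{n}-u=\sum_{i=1}^{m}\rho_{i}u^{n}_{i}-\sum_{i=1}^{m}\rho_{i}u=\sum_{i=1}^{m}\rho_{i}w_{i}=Tv .
\]
Here each product $\rho_{i}w_{i}$ is understood as extended by zero outside $M_{i}$, exactly as in the definition of $T$. Lemma \ref{lem_error_operator} then shows that $u^{n}-u=Tv$ lies in $C^{0}_{0}(M)$.

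Finally, iterating the identity gives $u^{n}-u=T(u^{n-1}-u)=\dots=T^{n}(u^{0}-u)$. The only delicate point is identifying $w_{i}$ with the Perron/classical solution defining $T$. This needs the regularity upgrade from distributional to classical solutions, together with continuity of $w_{i}$ up to $\partial M_{i}$; both are already supplied by Lemma \ref{lem_algorithm_wellpose} and its proof.
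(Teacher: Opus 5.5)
Your proposal is correct and is essentially the paper's proof. In both, continuity and vanishing on $\partial M$ come from Lemma \ref{lem_algorithm_wellpose}, and the identity follows from $u^{n}-u=\sum_{i}\rho_{i}(u^{n}_{i}-u)$, where $u^{n}_{i}-u$ solves \eqref{eqn_error_operator} with data $u^{n-1}-u$. You spell out the identification of $u^{n}_{i}-u$ with the solution defining $T$ (via hypoellipticity and the uniqueness in Lemma \ref{lem_perron_lipschitz}), which the paper leaves implicit. The ``induction'' framing is unnecessary, since Lemma \ref{lem_algorithm_wellpose} gives the needed facts for every $n$ directly.
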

\begin{proof}
By Lemma \ref{lem_algorithm_wellpose}, we know $(u^{n} - u) \in C^{0}_{0} (M)$. Furthermore,
\begin{equation}\label{prop_error_operator_1}
u^{n} - u = \sum_{i=1}^{m} \rho_{i} u^{n}_{i} -u = \sum_{i=1}^{m} \rho_{i} (u^{n}_{i} -u),
\end{equation}
$\mathfrak{L} (u^{n}_{i} -u) =0$ in $M_{i} \setminus \partial M_{i}$ and $(u^{n}_{i} -u)|_{\partial M_{i}} = u^{n-1} -u$. The conclusion follows.
\end{proof}

Let $\theta_{i}$ be the function in \eqref{eqn_bound_function}. Let $\Theta$ be the one in Theorem \ref{thm_bound}.
\begin{lemma}\label{lem_bound_function}
The function $\tau := \sum_{i=1}^{m} \rho_{i} \theta_{i}$ is a well defined element in $C^{0}_{0} (M)$, and $0 \le \tau \le \Theta$. Furthermore, $\tau >0$ in $M \setminus \partial M$.
\end{lemma}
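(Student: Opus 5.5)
The proof has three parts: continuity together with the vanishing on $\partial M$, the two-sided bound, and strict positivity in the interior.

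\emph{Continuity and the value $0$ on $\partial M$.} I would argue exactly as in the proof of Lemma \ref{lem_error_operator}. By (1) of Lemma \ref{lem_bound_perron}, $\theta_{i}$ is continuous on $M_{i} \setminus (\overline{\gamma_{i}} \cap \partial M)$, and in particular on $M_{i} \setminus \overline{\gamma_{i}}$. By Lemma \ref{lem_subdomain_interior}, the sets $M_{i} \setminus \overline{\gamma_{i}}$ and $M \setminus \mathrm{supp}\,\rho_{i}$ form an open cover of $M$. The product $\rho_{i}\theta_{i}$ is continuous on the first set and identically $0$ on the second, so each $\rho_{i}\theta_{i}$, extended by $0$ outside $M_{i}$, is continuous on $M$. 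Hence $\tau \in C^{0}(M)$.

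For the boundary condition, take $x \in \partial M$ with $\rho_{i}(x) \neq 0$. Then $x \in \mathrm{supp}\,\rho_{i} \subset M_{i}\setminus\overline{\gamma_{i}}$, and also $x \in \partial M_{i}$, because $M_{i}\setminus\partial M_{i}$ is open in $M$ and therefore contains no point of $\partial M$. So $x \in \partial M_{i}\setminus\overline{\gamma_{i}}$, and (1) of Lemma \ref{lem_bound_perron} gives $\theta_{i}(x)=0$. It follows that $\tau|_{\partial M}=0$, so $\tau \in C^{0}_{0}(M)$.

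\emph{The bounds $0 \le \tau \le \Theta$.} By (2) of Lemma \ref{lem_bound_perron}, $\theta_{i}\ge 0$, and since $\rho_{i}\ge0$ we get $\tau\ge0$. For the upper bound, note that wherever $\rho_{i}(x)>0$ we have $x\in\mathrm{supp}\,\rho_{i}$, so $\theta_{i}(x)\le\Theta_{i}\le\Theta$. Therefore $\tau(x)\le\sum_{i}\rho_{i}(x)\Theta=\Theta$.

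\emph{Positivity in $M\setminus\partial M$.} Fix $x\in M\setminus\partial M$. Since $\sum_{i}\rho_{i}=1$, some $i$ has $\rho_{i}(x)>0$, and then $x\in M_{i}\setminus\overline{\gamma_{i}}$. If $x\in M_{i}\setminus\partial M_{i}$, then (3) of Lemma \ref{lem_bound_perron} gives $\theta_{i}(x)>0$, so $\tau(x)\ge\rho_{i}(x)\theta_{i}(x)>0$. The remaining case is $x\in\partial M_{i}\setminus\overline{\gamma_{i}}$, but there $\partial M_{i}\setminus\overline{\gamma_{i}}\subseteq\partial M_{i}\setminus\gamma_{i}\subseteq\partial M$, contradicting $x\notin\partial M$. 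So this case cannot occur.

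The main obstacle is the bookkeeping about where $\theta_{i}$ is continuous and takes boundary values near $\overline{\gamma_{i}}\cap\partial M$. This is already handled by Lemmas \ref{lem_subdomain_interior} and \ref{lem_bound_perron}, because $\mathrm{supp}\,\rho_{i}$ stays away from $\overline{\gamma_{i}}$.
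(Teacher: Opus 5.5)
Your proof is correct and follows the paper's argument essentially step for step: continuity from the open cover $\{M_i\setminus\overline{\gamma_i},\, M\setminus\mathrm{supp}\,\rho_i\}$ as in Lemma \ref{lem_error_operator}, vanishing on $\partial M$ from $\theta_i=0$ on $\partial M_i\setminus\overline{\gamma_i}$, the bound $\tau\le\sum_i\rho_i\Theta_i\le\Theta$, and positivity from $\mathrm{supp}\,\rho_i\setminus\partial M\subseteq M_i\setminus\partial M_i$. One small justification needs fixing: openness of $M_i\setminus\partial M_i$ in $M$ does not by itself exclude points of $\partial M$ (for instance, $M$ is open in $M$); what does exclude them is that each point of $M_i\setminus\partial M_i$ has an $M$-open neighborhood homeomorphic to $\mathbb{R}^d$, so invariance of domain gives $(M_i\setminus\partial M_i)\cap\partial M=\emptyset$, which is the fact $M_i\cap\partial M=\partial M_i\cap\partial M$ that the paper also uses.
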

\begin{proof}
By (1) of Lemma \ref{lem_bound_perron}, we know $\theta_{i}$ is continuous in $M_{i} \setminus \overline{\gamma_{i}}$. By the same argument on the continuity of $Tv$ in Lemma \ref{lem_error_operator}, we know $\tau$ is continuous. Since $\theta_{i} =0$ on $\partial M_{i} \setminus \overline{\gamma_{i}}$, we further see $\tau \in C^{0}_{0} (M)$.

By (3) of Lemma \ref{lem_bound_perron}, we know $\theta_{i} >0$ in $M_{i} \setminus \partial M_{i}$. Since $M_{i} \cap \partial M = \partial M_{i} \cap \partial M$, $\gamma_{i} = \partial M_{i} \setminus \partial M$ and $\overline{\gamma_{i}} \subseteq \partial M_{i}$, we have
\begin{equation}\label{lem_bound_function_1}
\mathrm{supp} \rho_{i} \setminus \partial M \subset (M_{i} \setminus \overline{\gamma_{i}}) \setminus \partial M = M_{i} \setminus [\overline{\gamma_{i}} \cup (M_{i} \cap \partial M)] = M_{i} \setminus \partial M_{i}.
\end{equation}
So $\tau >0$ in $M \setminus \partial M$. Since $\Theta_{i} = \max_{\mathrm{supp} \rho_{i}} \theta_{i}$ and $\Theta = \max\{\Theta_{1}, \dots, \Theta_{m} \}$, we infer
\[
\tau = \sum_{i=1}^{m} \rho_{i} \theta_{i} \le \sum_{i=1}^{m} \rho_{i} \Theta_{i} \le \sum_{i=1}^{m} \rho_{i} \Theta = \Theta,
\]
which finishes the proof.
\end{proof}

\begin{proposition}\label{prop_1_iteration_bound}
If $v \in C^{0}_{0} (M)$, then
\begin{equation}\label{prop_1_iteration_bound_1}
-\| v \|_{C^{0} (M)} \tau \le Tv \le \| v \|_{C^{0} (M)} \tau.
\end{equation}
Particularly, $T$ is a continuous linear map with $\| T \| \le \Theta$.
\end{proposition}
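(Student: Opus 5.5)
The plan is a pointwise comparison in each subdomain followed by summation against the partition of unity. Fix $v \in C^{0}_{0} (M)$ and set $\lambda := \| v \|_{C^{0} (M)}$. For each $i$, let $w_{i}$ be the solution of \eqref{eqn_error_operator}. The goal is
\[
-\lambda \theta_{i} \le w_{i} \le \lambda \theta_{i} \quad \text{on } M_{i} \setminus (\overline{\gamma_{i}} \cap \partial M),
\]
or at least on $\mathrm{supp} \rho_{i}$. Granting this, multiplying by $\rho_{i} \ge 0$ and summing gives $-\lambda \tau \le Tv \le \lambda \tau$. Here $\tau$ is the function of Lemma \ref{lem_bound_function}, and outside $M_{i}$ both sides vanish by the convention that $\rho_{i} w_{i}$ and $\rho_{i} \theta_{i}$ are extended by zero. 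Lemma \ref{lem_bound_function} gives $0 \le \tau \le \Theta$, so $|Tv| \le \Theta \lambda$ pointwise. Together with the linearity from Lemma \ref{lem_error_operator}, this yields continuity of $T$ and $\| T \| \le \Theta$.

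For the comparison I would use the Perron characterization rather than the classical comparison principle, because $\theta_{i}$ need not be continuous on $\overline{\gamma_{i}} \cap \partial M$. Consider the boundary data on $\partial M_{i}$. On $\partial M_{i} \setminus \overline{\gamma_{i}} \subseteq \partial M$ we have $v = 0$, and on $\overline{\gamma_{i}} \cap \partial M$ we also have $v = 0$. On $\gamma_{i}$ we have $|v| \le \lambda$. Hence
\[
v|_{\partial M_{i}} \le \lambda \psi_{i}, \qquad \text{where } \psi_{i} := 1 \text{ on } \overline{\gamma_{i}}, \quad \psi_{i} := 0 \text{ on } \partial M_{i} \setminus \overline{\gamma_{i}}.
\]
Since $v|_{\partial M_{i}}$ is continuous and $\partial M_{i}$ is Lipschitz, Lemma \ref{lem_perron_lipschitz} (2) identifies $w_{i}$ with the Perron solution $u_{v|_{\partial M_{i}}}$. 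By (3) of Lemma \ref{lem_perron}, $\lambda \theta_{i}$ is the Perron solution with data $\lambda \psi_{i}$. Monotonicity, (4) of Lemma \ref{lem_perron}, then gives $w_{i} \le \lambda \theta_{i}$ on all of $M_{i}$. For the lower bound, apply the same argument to $-v$: the map $v \mapsto w_{i}$ is linear, and the Perron solution with data $-v$ equals $-w_{i}$ by the uniqueness in Lemma \ref{lem_perron_lipschitz} (2). This gives $-w_{i} \le \lambda \theta_{i}$.

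The main obstacle is making the boundary bookkeeping honest. One must check that $v|_{\partial M_{i}} \le \lambda \psi_{i}$ holds everywhere on $\partial M_{i}$, including the delicate set $\overline{\gamma_{i}} \cap \partial M$ (there $v = 0 \le \lambda$). One must also check that the Perron monotonicity does not require continuity of $\psi_{i}$; it does not, since Lemma \ref{lem_perron} (4) only needs boundedness. A further point is that the sum defining $Tv$ involves $\rho_{i} w_{i}$ only on $\mathrm{supp} \rho_{i} \subset M_{i} \setminus \overline{\gamma_{i}}$, where $\theta_{i}$ is continuous. There the pointwise inequalities are genuine and the summation yields a continuous bound.
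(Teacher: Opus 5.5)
Your proposal is correct and follows the paper's proof essentially step for step. The paper identifies $\|v\|\theta_i$ as the Perron solution with data $\|v\|$ on $\overline{\gamma_i}$ and $0$ on $\partial M_i \setminus \overline{\gamma_i}$ via (3) of Lemma \ref{lem_perron}, obtains $w_i \le \|v\|\theta_i$ from the monotonicity (4), sums against the $\rho_i$, gets the lower bound from $T(-v) = -Tv$, and deduces $\|T\| \le \Theta$ from Lemma \ref{lem_bound_function}; your boundary check that $v|_{\partial M_i} \le \lambda\psi_i$ on all of $\partial M_i$ (using $\partial M_i \setminus \overline{\gamma_i} \subseteq \partial M$) is the verification the paper leaves implicit.
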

\begin{proof}
By (3) of Lemma \ref{lem_perron}, $\| v \| \theta_{i}$ is the Perron solution to $\mathfrak{L} =0$ in $M_{i}$ with prescribed boundary value: $\| v \|$ on $\overline{\gamma_{i}}$ and $0$ on $\partial M_{i} \setminus \overline{\gamma_{i}}$. Let $w_{i}$ be the one obtained from $v$ via \eqref{eqn_error_operator}. By (4) of Lemma \ref{lem_perron}, we have $w_{i} \le \| v \| \theta_{i}$. So
\[
Tv = \sum_{i=1}^{m} \rho_{i} w_{i} \le \sum_{i=1}^{m} \rho_{i} \| v \| \theta_{i} = \| v \| \tau.
\]
We further infer $-Tv = T(-v) \le \| -v \| \tau = \| v \| \tau$. In summary, we obtain \eqref{prop_1_iteration_bound_1}.

Finally, $\| T \| \le \Theta$ follows from \eqref{prop_1_iteration_bound_1} and Lemma \ref{lem_bound_function}.
\end{proof}

We are ready to prove Theorem \ref{thm_rate}.
\begin{proof}[Proof of Theorem \ref{thm_rate}]
(1). By \eqref{prop_error_operator_1}, we have, $\forall x \in M$,
\begin{align*}
|(u^{n} - u) (x)| & \le \sum_{i=1}^{m} \rho_{i} (x) |(u^{n}_{i} -u) (x)| \le \sum_{i=1}^{m} \rho_{i} (x) \| u^{n}_{i} -u \|_{C^{0} (M_{i})} \\
& \le \max_{1 \le i \le m} \{ \| u^{n}_{i} - u \|_{C^{0} (M_{i})} \} \sum_{i=1}^{m} \rho_{i} (x) = \max_{1 \le i \le m} \{ \| u^{n}_{i} - u \|_{C^{0} (M_{i})} \}.
\end{align*}
Note that, if $x \notin M_{i}$, then $(u^{n}_{i} -u) (x)$ is undefined; however, $\rho_{i} (x) =0$ and the summand $\rho_{i} (x) |(u^{n}_{i} -u) (x)|$ is redundant in this case. So the above estimate makes sense and $\| u^{n} - u \| \le \max_{1 \le i \le m} \{ \| u^{n}_{i} - u \|_{C^{0} (M_{i})} \}$.

Furthermore, $\mathfrak{L} (u^{n}_{i} -u) =0$ and $(u^{n}_{i} -u) \in C^{0} (M_{i})$. By the weak maximum principle \ref{cor_maximum}, we have
\begin{align*}
\| u^{n}_{i} - u \|_{C^{0}(M_{i})} & = \| (u^{n}_{i} - u)|_{\partial M_{i}} \|_{C^{0}(\partial M_{i})} = \| (u^{n-1} - u)|_{\partial M_{i}} \|_{C^{0}(\partial M_{i})} \\
& \le \| u^{n-1} - u \|_{C^{0}(M)},
\end{align*}
which implies $\max_{1 \le i \le m} \{ \| u^{n}_{i} - u \|_{C^{0} (M_{i})} \} \le \| u^{n-1} - u \|$.

(2). This follows from Propositions \ref{prop_error_operator} and \ref{prop_1_iteration_bound}.
\end{proof}

\subsection{With Boundary}
We now prove Theorems \ref{thm_rate_boundary} and \ref{thm_slow}. We assume $\partial M \ne \emptyset$ throughout this subsection.

The operator $T$ and function $\tau$ below are the ones defined as above.
\begin{lemma}\label{lem_error_operator_comparison}
Suppose $v_{j} \in C^{0}_{0} (M)$ ($j=1,2$) and $v_{1} \le v_{2}$. Then $T v_{1} \le T v_{2}$.
\end{lemma}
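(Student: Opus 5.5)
The plan is to reduce the statement to monotonicity of each local solution map and then use the nonnegativity of the partition of unity. Fix $v_{1} \le v_{2}$ in $C^{0}_{0} (M)$, and for each $i$ let $w_{i}^{(1)}$ and $w_{i}^{(2)}$ be the solutions of \eqref{eqn_error_operator} on $M_{i}$ with boundary data $v_{1}|_{\partial M_{i}}$ and $v_{2}|_{\partial M_{i}}$. By Lemma \ref{lem_error_operator} these are well-defined elements of $C^{0} (M_{i})$. By Lemma \ref{lem_perron_lipschitz}, each is the Perron solution with the given continuous boundary value, and it lies in $C^{2} (M_{i} \setminus \partial M_{i})$.

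The key step is to show $w_{i}^{(1)} \le w_{i}^{(2)}$ on $M_{i}$. Since the boundary data satisfy $v_{1}|_{\partial M_{i}} \le v_{2}|_{\partial M_{i}}$, this follows directly from (4) of Lemma \ref{lem_perron}. Alternatively, one can apply the comparison principle, Corollary \ref{cor_comparison}, with $M' = M_{i}$. Here $\partial M_{i} \ne \emptyset$ because $M_{i} \ne M$ and $M$ is connected, and $\mathfrak{L} w_{i}^{(1)} = 0 = \mathfrak{L} w_{i}^{(2)}$ in $M_{i} \setminus \partial M_{i}$.

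Finally, since each $\rho_{i} \ge 0$ and $\rho_{i}$ vanishes outside $M_{i}$, we get $\rho_{i} w_{i}^{(1)} \le \rho_{i} w_{i}^{(2)}$ pointwise on $M$. Summing over $i$ gives $T v_{1} \le T v_{2}$. An equivalent route is to use the linearity of $T$ from Lemma \ref{lem_error_operator} and show $T(v_{2} - v_{1}) \ge 0$ for the nonnegative function $v_{2} - v_{1}$. There is no real obstacle; the only point needing care is that the local solutions are the continuous Perron solutions, so the comparison principle or Lemma \ref{lem_perron} applies, and that is guaranteed by the Lipschitz assumption on $\partial M_{i}$.
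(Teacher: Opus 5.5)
Your proposal is correct and follows the paper's own proof: you compare the local solutions $w_{i}^{(1)} \le w_{i}^{(2)}$ via the comparison principle (Corollary \ref{cor_comparison}) and then sum against the nonnegative $\rho_{i}$. Your alternative via (4) of Lemma \ref{lem_perron}, and your check that $\partial M_{i} \ne \emptyset$, are valid and slightly more explicit than the paper's version.
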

\begin{proof}
Let $w_{ij} \in C^{0} (M_{i})$ be the function obtained from $v_{j}$ via \eqref{eqn_error_operator}. Then
\[
w_{i1}|_{\partial M_{i}} = v_{1}|_{\partial M_{i}} \le v_{2}|_{\partial M_{i}} = w_{i2}|_{\partial M_{i}}.
\]
Since $w_{ij}$'s are solutions to $\mathfrak{L} =0$, by Corollary \ref{cor_comparison}, we have $w_{i1} \le w_{i2}$. Now the conclusion follows from the definition of $T$.
\end{proof}

Proposition \ref{prop_1_iteration_bound} and Lemma \ref{lem_error_operator_comparison} immediately imply the following result.
\begin{proposition}\label{prop_n_iteration_bound}
If $v \in C^{0}_{0} (M)$, then, $\forall n >0$, $T^{n} v \in C^{0}_{0} (M)$ and
\[
-\| v \|_{C^{0} (M)} T^{n-1} \tau \le T^{n}v \le \| v \|_{C^{0} (M)} T^{n-1} \tau.
\]
Particularly, $\| T^{n} \| \le \| T^{n-1} \tau \|_{C^{0} (M)}$ for all $n>0$.
\end{proposition}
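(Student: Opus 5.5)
The plan is induction on $n$. The two ingredients are Proposition \ref{prop_1_iteration_bound} and the order-preservation of $T$ from Lemma \ref{lem_error_operator_comparison}, and linearity of $T$ is what lets them combine.

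For $n=1$, I interpret $T^{0}\tau=\tau$. The claim $-\|v\|\tau \le Tv \le \|v\|\tau$ is then exactly \eqref{prop_1_iteration_bound_1}. The membership $Tv\in C^{0}_{0}(M)$ comes from Lemma \ref{lem_error_operator}.

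For the inductive step, assume
\[
-\|v\|\,T^{n-1}\tau \le T^{n}v \le \|v\|\,T^{n-1}\tau
\]
with all three functions in $C^{0}_{0}(M)$. Note that $\tau\in C^{0}_{0}(M)$ by Lemma \ref{lem_bound_function}, so $T^{k}\tau\in C^{0}_{0}(M)$ for every $k$ by repeated use of Lemma \ref{lem_error_operator}. Hence Lemma \ref{lem_error_operator_comparison} applies to these functions. Applying $T$ gives
\[
T(-\|v\|\,T^{n-1}\tau) \le T^{n+1}v \le T(\|v\|\,T^{n-1}\tau),
\]
and linearity of $T$ turns the two outer terms into $\mp\|v\|\,T^{n}\tau$. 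This closes the induction, and $T^{n+1}v\in C^{0}_{0}(M)$ again by Lemma \ref{lem_error_operator}.

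For the norm bound, the two-sided inequality gives $|T^{n}v(x)| \le \|v\|\,|T^{n-1}\tau(x)|$ pointwise. Taking the supremum yields $\|T^{n}v\| \le \|T^{n-1}\tau\|\,\|v\|$, and so $\|T^{n}\| \le \|T^{n-1}\tau\|_{C^{0}(M)}$.

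I do not expect a real obstacle here. The only point needing care is that Lemma \ref{lem_error_operator_comparison} requires its arguments to lie in $C^{0}_{0}(M)$, which is why the membership $T^{k}\tau\in C^{0}_{0}(M)$ from Lemmas \ref{lem_bound_function} and \ref{lem_error_operator} is recorded first.
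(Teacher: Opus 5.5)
Your proof is correct and follows the paper's intended route. The paper only remarks that Proposition~\ref{prop_1_iteration_bound} and Lemma~\ref{lem_error_operator_comparison} immediately imply the result, and your induction (base case from \eqref{prop_1_iteration_bound_1}, inductive step via monotonicity and linearity of $T$, with $T^{k}\tau \in C^{0}_{0}(M)$ noted so the lemma applies) is exactly that omitted argument.
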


As a result, the estimate of $\| T^{n} \|$ is reduced to that of $T^{n-1} \tau$. Recall that we let $U_{i}$ denote $\{ x \in M \mid \rho_{i} (x) >0 \}$.
\begin{proposition}\label{prop_iteration_function}
Let $N$ be the winding number. Then $0 < T^{n} \tau \le 1$ in $M \setminus \partial M$, for all $n \ge 0$, and $T^{n} \tau < 1$ for all $n \ge N-1$.
\end{proposition}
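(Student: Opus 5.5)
We argue by induction on $n$, tracking the local pieces of $T^{n}\tau$ on each subdomain. For $n \ge 1$, let $w^{(n)}_{i} \in C^{0}(M_{i})$ be the solution of \eqref{eqn_error_operator} with boundary data $v = T^{n-1}\tau$, so that $T^{n}\tau = \sum_{i} \rho_{i} w^{(n)}_{i}$. For $n = 0$, put $w^{(0)}_{i} := \theta_{i}$, so that $\tau = \sum_{i} \rho_{i} \theta_{i}$. Since every $w^{(n)}_{i}$ with $n \ge 1$ is continuous on all of $M_{i}$, we will work with $\sup_{\mathrm{supp}\rho_{i}} w^{(n)}_{i}$ as the key quantity.

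\emph{Step 1: the bounds $0 < T^{n}\tau \le 1$ in $M \setminus \partial M$.} We prove these by induction. For $n = 0$ they follow from Lemma \ref{lem_bound_function} together with $\Theta \le 1$ (Theorem \ref{thm_bound}). For the upper bound we use Proposition \ref{prop_1_iteration_bound}, which gives $T^{n}\tau \le \|T^{n-1}\tau\|_{C^{0}(M)}\,\tau \le \tau \le 1$. Alternatively, Lemma \ref{lem_error_operator_comparison} gives $T^{n}\tau \le \tau$ after noting $T^{n-1}\tau \le 1$. For positivity, assume $v := T^{n-1}\tau \ge 0$ with $v > 0$ in $M \setminus \partial M$. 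Then each $w^{(n)}_{i}$ has nonnegative boundary data that is positive on $\gamma_{i} \neq \emptyset$, so Corollary \ref{cor_comparison} gives $w^{(n)}_{i} > 0$ in $M_{i} \setminus \partial M_{i}$. Since $\mathrm{supp}\rho_{i} \setminus \partial M \subset M_{i} \setminus \partial M_{i}$ by \eqref{lem_bound_function_1}, and $\sum_{i} \rho_{i} = 1$, we obtain $T^{n}\tau > 0$ in $M \setminus \partial M$. In addition, $0 \le w^{(n)}_{i} \le 1$ by the weak maximum principle.

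\emph{Step 2: the strict bound.} The inductive claim is
\[
\text{for } n \ge 0 \text{ and every } i \in I_{n+1}: \quad \sup_{\mathrm{supp}\rho_{i}} w^{(n)}_{i} < 1 .
\]
\emph{Base case $n = 0$.} This is exactly $\Theta_{i} < 1$ for $i \in I_{1}$, shown in the proof of Theorem \ref{thm_bound} via (4) of Lemma \ref{lem_bound_perron} and compactness.

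\emph{Transfer to $T^{n}\tau$.} Assume the claim at step $n$, and let $x \in U_{j}$ with $j \in I_{n+1}$. Then
\[
T^{n}\tau(x) = \rho_{j}(x)\,w^{(n)}_{j}(x) + \sum_{i \ne j} \rho_{i}(x)\,w^{(n)}_{i}(x) < 1,
\]
because $\rho_{j}(x) > 0$, $w^{(n)}_{j}(x) < 1$, and all the other pieces are at most $1$. Hence $T^{n}\tau < 1$ on $\bigcup_{j \in I_{n+1}} U_{j}$.

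\emph{Induction step.} Take $i \in I_{n+2}$; by (1) of Lemma \ref{lem_winding}, one of two cases holds.
\begin{enumerate}[(a)]
\item $\partial M_{i} \ne \overline{\gamma_{i}}$. Then the data $T^{n}\tau$ vanishes on the nonempty set $\partial M_{i} \setminus \overline{\gamma_{i}} \subseteq \partial M$, so it is not identically $1$.
\item $\partial M_{i}$ meets $\bigcup_{j \in I_{n+1}} U_{j}$. Then, by the transfer step, the data is $< 1$ somewhere on $\partial M_{i}$.
\end{enumerate}
In either case, the continuous data is $\le 1$ and not identically $1$. The comparison and strong maximum principle (Corollary \ref{cor_comparison}) then give $w^{(n+1)}_{i} < 1$ in $M_{i} \setminus \partial M_{i}$. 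On $\mathrm{supp}\rho_{i} \cap \partial M_{i} \subseteq \partial M$ we have $w^{(n+1)}_{i} = 0$. Since $\mathrm{supp}\rho_{i}$ is compact and $w^{(n+1)}_{i}$ is continuous, $\sup_{\mathrm{supp}\rho_{i}} w^{(n+1)}_{i} < 1$, which is the claim at step $n+1$.

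\emph{Conclusion.} For $n \ge N-1$ we have $I_{n+1} \supseteq I_{N} = \{1, \dots, m\}$ by Proposition \ref{prop_winding}. The transfer step then shows $T^{n}\tau < 1$ on $\bigcup_{j} U_{j} = M$; on $\partial M$ this is immediate anyway, since $T^{n}\tau = 0$ there.

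The delicate point is the index bookkeeping: step $n$ must control exactly $I_{n+1}$, so that the threshold comes out as $N-1$ rather than $N$. A second subtlety is strictness on $\mathrm{supp}\rho_{i}$, which can touch $\partial M$. For $n \ge 1$, both are handled by the continuity of $w^{(n)}_{i}$ on all of $M_{i}$, which holds because the data $T^{n-1}\tau$ is continuous. For $n = 0$, $\theta_{i}$ may be discontinuous at $\overline{\gamma_{i}} \cap \partial M$, and this is why the base case must be taken from Theorem \ref{thm_bound}.
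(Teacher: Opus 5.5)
Your proposal is correct and follows essentially the same induction as the paper: the base case comes from $\theta_{i}<1$ for $i\in I_{1}$, strictness is propagated to $I_{n+1}$ through the comparison/strong maximum principle using (1) of Lemma \ref{lem_winding}, and Proposition \ref{prop_winding} finishes the argument. The only difference is cosmetic. You track the uniform bound $\sup_{\mathrm{supp}\rho_{i}} w^{(n)}_{i}<1$, whereas the paper records the pointwise bound $w_{i}<1$ on $M_{i}\setminus\overline{\gamma_{i}}$; your version is a harmless strengthening that the pointwise conclusion $T^{n}\tau<1$ does not require.
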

\begin{proof}
We first note that $0 < T^{n} \tau \le 1$ on $M \setminus \partial M$. By Lemma \ref{lem_bound_function}, this already holds for $n=0$. Using the strong maximum principle \ref{cor_maximum} and \eqref{lem_bound_function_1}, the general statement is easily verified by induction on $n$.

In the following, we prove $T^{n} \tau < 1$ on $\bigcup_{i \in I_{n+1}} U_{i}$ by induction on $n$.

Take $n=0$. By the definition of $I_{n+1} = I_{1}$, we have, $\forall i \in I_{1}$, $\partial M_{i} \ne \overline{\gamma_{i}}$, and $\theta_{i} |_{U_{i}} < 1$ by (4) of Lemma \ref{lem_bound_perron}. We also know $\rho_{j}|_{U_{j}} >0$ and $\theta_{j} \le 1$ for each $j$. So
\[
T^{0} \tau = \tau = \sum_{j=1}^{m} \rho_{j} \theta_{j} < \sum_{j=1}^{m} \rho_{j} = 1
\]
on $\bigcup_{i \in I_{1}} U_{i}$. The claim is true for $n=0$.

Assume the claim is proved for $n=k$. Let's consider the case of $n=k+1$. Since $0 \le T^{k} \tau \le 1$, by (2) of Lemma \ref{lem_perron}, $0 \le w_{j} \le 1$ for all $j$, where $w_{j}$ is obtained from $T^{k} \tau$ via \eqref{eqn_error_operator}.

Suppose $i \in I_{k+2}$. By Lemma \ref{lem_winding}, we have $\partial M_{i} \ne \overline{\gamma_{i}}$ or $\partial M_{i} \cap (\bigcup_{j \in I_{k+1}} U_{j}) \ne \emptyset$. If $\partial M_{i} \ne \overline{\gamma_{i}}$, then $T^{k} \tau =0$ on $\partial M_{i} \setminus \overline{\gamma_{i}} \subseteq \partial M$. If $\partial M_{i} \cap (\bigcup_{j \in I_{k+1}} U_{j}) \ne \emptyset$, by inductive hypothesis, $T^{k} \tau <1$ on $\partial M_{i} \cap (\bigcup_{j \in I_{k+1}} U_{j})$. By the strong maximum principle \ref{cor_maximum}, in either case, we have $w_{i} <1$ on $M_{i} \setminus \overline{\gamma_{i}}$.

Since $U_{j} \subset M_{j} \setminus \overline{\gamma_{j}}$ and $\rho_{j}|_{U_{j}} >0$ for each $j$, we have, on $\bigcup_{i \in I_{k+2}} U_{i}$,
\[
T^{k+1} \tau = \sum_{j=1}^{m} \rho_{j} w_{j} < \sum_{j=1}^{m} \rho_{j} = 1.
\]
The claim is verified for $n=k+1$ and the induction is completed.

By Proposition \ref{prop_winding}, we have $I_{n+1} = \{ i \mid 1 \le i \le m\}$ for all $n \ge N-1$. Now the conclusion follows from the fact $\bigcup_{i=1}^{m} U_{i} = M$.
\end{proof}

\begin{remark}
The proof of Proposition \ref{prop_iteration_function} is an inductive argument on a sequence of subdomains, starting from those adjacent to $\partial M$ and successively moving toward the interior. The same type of argument was used in \cite{lions3} and subsequently in \cite{deng2}, \cite{deng3}, and \cite{qin_xu06}.
\end{remark}

\begin{remark}
Proposition \ref{prop_iteration_function} also relies on Proposition \ref{prop_winding}, which involves intricate topological arguments.
\end{remark}

We are at a position to prove Theorem \ref{thm_rate_boundary}.
\begin{proof}[Proof of Theorem \ref{thm_rate_boundary}]
(1). Let $L := \| T^{N-1} \tau \|_{C^{0} (M)}^{\frac{1}{N}}$. By Propositions \ref{prop_iteration_function} and \ref{prop_n_iteration_bound}, we have $L \in (0,1)$ and $\| T^{N} \| \le L^{N}$. Since $(u^{n+N} - u) = T^{N} (u^{n} -u)$ by Proposition \ref{prop_error_operator}, the conclusion follows.

(2). We already know that $\| T^{N} \| \le L^{N}$ and $L>0$. For each $n \ge 0$, $n= N [\frac{n}{N}] +i$ for some $0 \le i < N$. Thus
\[
\| T^{n} \| \le \| T^{N} \|^{[\frac{n}{N}]} \| T \|^{i} \le L^{N[\frac{n}{N}]} \| T \|^{i} = (\| T \|^{i} L^{-i}) L^{n}.
\]
Let $C := \max \{ \| T \|^{i} L^{-i} \mid 0 \le i <N \}$, then $\| T^{n} \| \le C L^{n}$ for all $n \ge 0$. The conclusion now follows from Proposition \ref{prop_error_operator}.
\end{proof}

\begin{remark}
By the above proof, we see the $L$ in Theorem \ref{thm_rate_boundary} has an explicit expression $\| T^{N-1} \tau \|_{C^{0} (M)}^{\frac{1}{N}}$.
\end{remark}

The ingredient of Theorem \ref{thm_slow} is the following lemma whose proof is an argument ``dual" to that of Proposition \ref{prop_iteration_function}.
\begin{lemma}\label{lem_slow}
Suppose $N>1$, and $c=0$ in \eqref{eqn_operator}. Suppose $M \ne \bigcup_{i \ne j} M_{i}$ for each $j$. Suppose $v \in C^{0}_{0} (M)$ such that $0 \le v \le 1$ and $v=1$ on $\bigcup_{i \notin I_{1}} M_{i}$ (resp.~$\bigcup_{i \notin I_{2}} M_{i}$). Then $\| T^{n} v \| =1$ for $n \le N-1$ (resp.~$n \le N-2$).
\end{lemma}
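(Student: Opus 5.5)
We prove the ``dual'' statement of Proposition \ref{prop_iteration_function}: instead of showing $T^{n}\tau<1$ spreads inward from $\partial M$, we show that the value $1$ survives on the region not yet reached. Concretely, we aim to prove by induction on $n$ that
\[
T^{n} v = 1 \quad \text{on} \quad \bigcup_{i \notin I_{n+1}} M_{i} \qquad (\text{resp.\ on } \textstyle\bigcup_{i \notin I_{n+2}} M_{i}),
\]
together with $0 \le T^{n} v \le 1$. Since $I_{n+1} \ne \{1,\dots,m\}$ for $n+1 \le N-1$ (as $I_{N-1} \ne I_{N}$) and the union over $i \notin I_{n+1}$ is nonempty, $T^{n}v$ attains $1$ somewhere; combined with $\|T^{n}v\| \le \|v\| = 1$ (Theorem \ref{thm_rate}(1)/Proposition \ref{prop_1_iteration_bound} gives $\|T\|\le 1$) we get $\|T^{n}v\|=1$ for $n \le N-2$. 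To reach $n=N-1$ in the first case, I would separately use that $I_{N}\setminus I_{N-1}\neq\emptyset$ to locate a point where the value $1$ persists at step $N-1$; more precisely I expect the sharper inductive claim to be: $T^{n}v = 1$ on $\bigcup_{i\notin I_{n}} M_{i}$, so that for $n=N-1$ the set $\bigcup_{i\notin I_{N-1}}M_i$ is nonempty. I will adopt this sharper form (with $I_0=\emptyset$), and the hypothesis $v=1$ on $\bigcup_{i\notin I_1}M_i$ serves as a starting strengthening at $n=0$ (indeed $v=1$ on $\bigcup_{i \notin I_0} M_i=M$ is false, so the base case is really $n=1$ from $v$'s hypothesis).

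The inductive step: $0\le T^n v\le 1$ follows from Lemma \ref{lem_perron}(2) and $\sum\rho_j=1$. Suppose $T^{n}v=1$ on $\bigcup_{i\notin I_{n}}M_i$ (for $n=0$ use the hypothesis on $I_1$). Take $i \notin I_{n+1}$. By Lemma \ref{lem_winding}(1), $\partial M_i=\overline{\gamma_i}$ and $\partial M_i\cap\bigcup_{j\in I_n}U_j=\emptyset$. Every $x\in\partial M_i$ lies in some $U_j$ with $\rho_j(x)>0$, necessarily $j\notin I_n$; so all $j$ with $\rho_j(x)>0$ have $j\notin I_n$, hence $x\in M_j$ with $j \notin I_n$ and $T^n v(x)=1$. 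Thus $T^n v=1$ on $\partial M_i$, and since $c=0$ the constant $1$ solves $\mathfrak{L}w=0$, so by uniqueness (Lemma \ref{lem_perron_lipschitz}) $w_i\equiv 1$ on $M_i$. Now for $y\in M_i$ with $i\notin I_{n+1}$, I need $T^{n+1}v(y)=\sum_j\rho_j(y)w_j(y)=1$, i.e.\ $w_j(y)=1$ whenever $\rho_j(y)>0$. This is the main obstacle: $y$ may lie in $U_j$ for some $j\in I_{n+1}$. I would handle it by showing such $j$ cannot occur, or else weakening the claim to hold on $\bigcup_{i\notin I_{n+1}}(M_i\setminus\bigcup_{j\in I_{n+1}}U_j)$ and using the hypothesis $M\neq\bigcup_{i\ne j}M_i$ to guarantee this set is nonempty: pick $i_0\in I_N\setminus I_{N-1}$ (or outside the relevant index set) and a point of $M_{i_0}$ covered by no other $M_k$; there $\rho_{i_0}=1$, so $T^{n+1}v=w_{i_0}=1$. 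Indeed, this is where the hypothesis enters: at a point $y$ belonging only to $M_{i_0}$, $T^{n+1}v(y)=w_{i_0}(y)$, so it suffices to show $w_{i_0}\equiv1$, i.e.\ $T^nv=1$ on $\partial M_{i_0}$.

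So the cleaned-up plan is to prove by induction the boundary statement ``$T^{n}v=1$ on $\partial M_i$ for all $i\notin I_{n+1}$'' (which gives $w_i\equiv1$ and then $T^{n+1}v=1$ at points of $\partial M_k$, $k\notin I_{n+2}$, since such points only see $\rho_j>0$ with $j\notin I_{n+1}$), and conclude with a point exclusively in $M_{i_0}$, $i_0\notin I_{n+1}$, valid while $I_{n+1}\neq I_N$, i.e.\ $n\le N-2$, plus one extra step using $w_{i_0}\equiv 1$ to get $n\le N-1$ in the first case; the resp.\ case shifts indices by one.
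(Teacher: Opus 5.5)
Your cleaned-up plan is correct and is essentially the paper's argument. The paper proves by induction that $0\le T^{n}v\le 1$ and $T^{n}v=1$ on $M\setminus\bigcup_{i\in I_{n}}U_{i}$; your boundary statement is this claim restricted to $\partial M_{i}$ with $i\notin I_{n+1}$, and your inductive step is the same computation. The paper then uses $M\ne\bigcup_{i\ne j}M_{i}$ to see that this set is nonempty for $n\le N-1$. You should discard your earlier claims that $T^{n}v=1$ on all of $\bigcup_{i\notin I_{n+1}}M_{i}$ or on $\bigcup_{i\notin I_{n}}M_{i}$: they fail in general, because some $\rho_{j}$ with $j\in I_{n}$ can be positive at interior points of such $M_{i}$, where $w_{j}<1$ is possible.
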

\begin{proof}
We first consider the case that $v=1$ on $\bigcup_{i \notin I_{1}} M_{i}$. Let's verify that $0 \le T^{n} v \le 1$ and $T^{n} v =1$ on $M \setminus \bigcup_{i \in I_{n}} U_{i}$ by induction on $n$.

Let $w_{i}$ be obtained from $v$ via \eqref{eqn_error_operator}. Since $0 \le v \le 1$, we have $0 \le w_{i} \le 1$ for all $i$ and hence $0 \le Tv \le 1$. Furthermore, by the assumption, $v|_{\partial M_{j}} =1$ for $j \notin I_{1}$ and $c=0$, we infer $w_{j} = 1$ for $j \notin I_{1}$. If $x \notin \bigcup_{i \in I_{1}} U_{i}$, then $\rho_{i} (x) =0$ for each $i \in I_{1}$ and
\[
Tv(x) = \sum_{i=1}^{m} \rho_{i} (x) w_{i} (x) = \sum_{j \notin I_{1}} \rho_{j} (x) w_{j} (x) = \sum_{j \notin I_{1}} \rho_{j} (x) = \sum_{i=1}^{m} \rho_{i} (x) =1.
\]
So the claim is proved when $n=1$.

Suppose the claim is proved for $n=k$. Let's consider the case of $n=k+1$. By inductive hypothesis, we have $0 \le T^{k} v \le 1$. Thus $0 \le w_{ik} \le 1$ for all $i$, where $w_{ik}$ is obtained from $T^{k} v$ via \eqref{eqn_error_operator}. We infer $0 \le T^{k+1} v \le 1$. On the other hand, if $j \notin I_{k+1}$, by Lemma \ref{lem_winding}, we have $\partial M_{j} \cap ( \bigcup_{i \in I_{k}} U_{i} ) = \emptyset$. Also by inductive hypothesis, $T^{k} v =1$ on $M \setminus \bigcup_{i \in I_{k}} U_{i}$. So $T^{k} v = 1$ on $\partial M_{j}$ for $j \notin I_{k+1}$. Since $c=0$, we infer $w_{jk} = 1$ for $j \notin I_{k+1}$. If $x \notin \bigcup_{i \in I_{k+1}} U_{i}$, then $\rho_{i} (x) =0$ for each $i \in I_{k+1}$, and
\[
T^{k+1} v (x) = \sum_{i=1}^{m} \rho_{i} (x) w_{ik} (x) = \sum_{j \notin I_{k+1}} \rho_{j} (x) w_{jk} (x) = \sum_{j \notin I_{k+1}} \rho_{j} (x) = \sum_{i=1}^{m} \rho_{i} (x) =1.
\]
The claim is proved for $n=k+1$ and the induction is completed.

Suppose $n \le N-1$. By the definition of $N$, we know $I_{n+1} \setminus I_{n} \ne \emptyset$. By the assumption, we have $M \ne \bigcup_{i \in I_{n}} M_{i} \supseteq \bigcup_{i \in I_{n}} U_{i}$. Therefore, $T^{n} v =1$ on the nonempty set $M \setminus \bigcup_{i \in I_{n}} U_{i}$. Since $0 \le T^{n} v \le 1$, we infer $\| T^{n} v \| =1$.

For the case that $v=1$ on $\bigcup_{i \notin I_{2}} M_{i}$, the argument is similar. We can show $T^{n} v =1$ on $M \setminus \bigcup_{i \in I_{n+1}} U_{i}$ by induction on $n$.
\end{proof}

\begin{proof}[Proof of Theorem \ref{thm_slow}]
We have $\bigcup_{i \notin I_{2}} M_{i} \ne \emptyset$ since $N>2$. By (2) of Lemma \ref{lem_boundary_neighborhood}, we also know $\partial M \cap \bigcup_{i \notin I_{2}} M_{i} = \emptyset$. Both $\partial M$ and $\bigcup_{i \notin I_{2}} M_{i}$ are closed. Therefore, by Urysohn's Lemma (\cite[p.~115]{Kelley}), there exists a $v \in C^{0}_{0} (M)$ such that $0 \le v \le 1$ and $v =1$ on $\bigcup_{i \notin I_{2}} M_{i}$. Define $u^{0} := u+v$, then $u^{0} \ne u$. By Proposition \ref{prop_error_operator}, we have $u^{n} - u = T^{n} v$ . By Lemma \ref{lem_slow}, we infer \eqref{thm_slow_1} holds for $n<N-1$.

If $M_{i} \cap \partial M = \emptyset$ for every $i$ satisfying $\partial M_{i} = \overline{\gamma_{i}}$, then $\partial M \cap \bigcup_{i \notin I_{1}} M_{i} = \emptyset$. Choose a $v \in C^{0}_{0} (M)$ such that $0 \le v \le 1$ and $v =1$ on $\bigcup_{i \notin I_{1}} M_{i}$. Define $u^{0} := u+v$, also by Lemma \ref{lem_slow}, the \eqref{thm_slow_1} holds for $n \le N-1$.
\end{proof}

\begin{remark}\label{rmk_slow}
Let's drop, in Theorem \ref{thm_slow}, the assumption ``$M \ne \bigcup_{i \ne j} M_{i}$ for each $j$". Then we can still prove \eqref{thm_slow_1} holds for $n<N-2$; moreover, if additionally $M_{i} \cap \partial M = \emptyset$ for every $i$ satisfying $\partial M_{i} = \overline{\gamma_{i}}$, then \eqref{thm_slow_1} holds for $n \le N-2$. Indeed, by (2) of Lemma \ref{lem_winding}, for $i \in I_{N} \setminus I_{N-1}$, we have $\partial M_{i} \cap ( \bigcup_{j \in I_{N-2}} U_{j} ) = \emptyset$, which implies $\bigcup_{j \in I_{N-2}} U_{j} \ne M$. Applying this fact in the proof of Lemma \ref{lem_slow}, we see $\| T^{n} v \| =1$ for $n \le N-2$ (resp.~$n \le N-3$) in that lemma if we drop its assumption ``$M \ne \bigcup_{i \ne j} M_{i}$ for each $j$".
\end{remark}

\section{High Regularity}\label{sec_high}
In this section, we prove Theorems \ref{thm_high_global} and \ref{thm_high_local}.

\subsection{Function Spaces}\label{subsec_function}
Let's first recall some widely used function spaces on compact manifolds.

Suppose $(M,g)$ is a Riemannian manifold. The Riemannian metric $g$ defines its Levi-Civita connection $\mathrm{D}$ on $M$. If $f$ is a $C^{k}$ function $(0 \le k < \infty)$ on $M$, then $\mathrm{D}^{k} f$ is the $k$-th covariant differential of $f$. It is a $(0,k)$-type tensor field over $M$. The magnitude $|\mathrm{D}^{k} f|$ is a continuous function on $M$. If $M$ is compact, $|\mathrm{D}^{k} f|$ attains its maximum, we can therefore define the $C^{k}$-norm of $f$ as
\[
\| f \|_{C^{k}} := \max \{ \|\mathrm{D}^{i} f \|_{C^{0}} \mid 0 \le i \le k \}.
\]
The $C^{k}$-norm depends on $g$, whereas the set $C^{k} (M)$ does not depend on $g$. However, since $M$ is compact, all metrics result in the same uniform structure (see e.g.,~\cite[Chapter~6]{Kelley}) on $C^{k} (M)$. In other words, if a metric $g_{1}$ (resp.~$g_{2}$) defines a $C^{k}$-norm $\| \cdot \|_{1}$ (resp.~$\| \cdot \|_{2}$) on $C^{k} (M)$, then $\| \cdot \|_{1}$ and $\| \cdot \|_{2}$ are equivalent (see \cite[p.~64]{conway}). Therefore, in practice, we may reduce estimates of $C^{k}$-norms to the case of functions on Euclidean domains. The technique of reduction is well-known (see e.g.,~\cite[Theorem~2.9]{Aubin}). Let's recall it.

Since $M$ is compact, it can be covered by finitely many coordinate charts $\{ W_{j} \mid 1 \le j \le m \}$. Choose a $C^{\infty}$ partition of unity $\{ \rho_{j} \mid 1 \le j \le m \}$ subordinate to this cover. Then we have, $\forall f \in C^{k} (M)$,
\[
\| f \|_{C^{k}} \le \sum_{j=1}^{m} \| \rho_{j} f \|_{C^{k}} \le C \| f \|_{C^{k}},
\]
where $C$ is a constant independent of $f$, it only depends on the metric and these $\rho_{j}$'s. In other words, the map $f \mapsto \sum_{j=1}^{m} \| \rho_{j} f \|_{C^{k}}$ defines a norm on $C^{k} (M)$ which is equivalent to the original norm $\| \cdot \|_{C^{k}}$. Since $W_{j}$ has local coordinates $(x_{1}, \dots, x_{d})$, we can identify $W_{j}$ with a Euclidean domain and compute the usual $C^{k}$-norm of $\rho_{j} f$ on this Euclidean domain. More precisely, the usual $C^{k}$-norm is
\[
\| \rho_{j} f \|_{C^{k}_{E}} = \max \left\{ \left\| \tfrac{\partial^{i}}{\partial x_{1}^{i_{1}} \cdots \partial x_{d}^{i_{d}}} (\rho_{j} f) \right\|_{C^{0}} \middle| 1 \le i \le k, i_{1} + \cdots + i_{d} =i \right\},
\]
where $\tfrac{\partial^{i}}{\partial x_{1}^{i_{1}} \cdots \partial x_{d}^{i_{d}}}$ is the usual partial derivative. Since $\mathrm{supp} \rho_{j}$ is compact, the norms $\| \cdot \|_{C^{k}}$ and $\| \cdot \|_{C^{k}_{E}}$ are equivalent on $\mathrm{supp} \rho_{j}$. So, to estimate $\| f \|_{C^{k}}$, it suffices to estimate each $\| \rho_{j} f \|_{C^{k}_{E}}$. Obviously,
\[
\| \rho_{j} f \|_{C^{k}_{E}} \le C \max \left\{ \left\| \tfrac{\partial^{i}}{\partial x_{1}^{i_{1}} \cdots \partial x_{d}^{i_{d}}}  f \right\|_{C^{0} (\mathrm{supp} \rho_{j})} \middle| 1 \le i \le k, i_{1} + \cdots + i_{d} =i \right\},
\]
where $C$ is a constant depending only on $\rho_{j}$. As a result, we only need to estimate the partial derivatives of $f$ with respect to these specific coordinates $(x_{1}, \dots, x_{d})$. The theory of PDEs on Euclidean domains can then be applied.

Similarly, for $0 < \alpha \le 1$, we can define the $C^{k, \alpha}$-norm (see e.g.,~\cite[p.~36]{Aubin})
\[
\| f \|_{C^{k, \alpha}} := \| f \|_{C^{k}} + \sup_{p \ne q} \left\{ \frac{|\mathrm{D}^{k} f(p) - \mathrm{D}^{k} f(q)|}{d(p,q)^{\alpha}} \right\},
\]
where $d(p,q)$ is the distance between $p$ and $q$. This results in the space $C^{k, \alpha} (M)$. We also define $C^{k,0} (M)$ as $C^{k} (M)$. For $1 \le p < \infty$, we can also define the Sobolev norm (cf.~\cite[Definition~2.3]{Aubin})
\[
\| f \|_{W^{k,p}} := \left( \sum_{i=0}^{k} \| \mathrm{D}^{i} f\|_{L^{p}}^{p} \right)^{\frac{1}{p}} \qquad \text{and} \qquad \| f \|_{W^{k,\infty}} := \max \{ \| \mathrm{D}^{i} f\|_{L^{\infty}} \mid 0 \le i \le k \},
\]
which results in the space $W^{k,p} (M)$ for $1 \le p \le \infty$. We also denote $W^{k,2} (M)$ by $H^{k} (M)$, which is a Hilbert space. Similar to the case of $\| f \|_{C^{k}}$, the estimates of $\| f \|_{C^{k, \alpha}}$ and $\| f \|_{W^{k,p}}$ can also be reduced to estimates on Euclidean domains.

\subsection{Elliptic Regularity}
The proofs of Theorems \ref{thm_high_global} and \ref{thm_high_local} are essentially applications of the classical elliptic regularity theory.

\begin{proof}[Proof of Theorem \ref{thm_high_global}]
First of all, we show that $u^{n}_{i} - u \in C^{\infty} (M_{i} \setminus \overline{\gamma_{i}})$ for all $n$ and $i$. Since $\mathfrak{L}$ is locally an elliptic operator with $C^{\infty}$ coefficients, this is nothing but a result of interior regularity. Since $\mathfrak{L} (u^{n}_{i} - u) =0$ in the distributional sense, by \cite[Theorem~7.4.1]{Hormander}, we already know $u^{n}_{i} - u \in C^{\infty} (M_{i} \setminus \partial M_{i})$ and $\mathfrak{L} (u^{n}_{i} - u) =0$ in the classical sense. If $\partial M_{i} = \overline{\gamma_{i}}$, there is nothing to prove. We pay more attention to the case that $\partial M_{i} \ne \overline{\gamma_{i}}$. Now $(u^{n}_{i} - u)|_{\partial M_{i} \setminus \overline{\gamma_{i}}} =0$. Since $\partial M_{i} \setminus \overline{\gamma_{i}}$ is a $C^{\infty}$ boundary portion of $M_{i}$, we also know $u^{n}_{i} - u \in C^{\infty} (M_{i} \setminus \overline{\gamma_{i}})$. (See the last paragraph of $\S$6.4 on \cite[p.~112]{Gilbarg_Trudinger}.)

Since $\mathrm{supp} \rho_{i} \subset M_{i} \setminus \overline{\gamma_{i}}$, by Lemma \ref{lem_subdomain_interior}, we know $\{  M_{i} \setminus \overline{\gamma_{i}}, M \setminus \mathrm{supp} \rho_{i} \}$ is an open cover of $M$. Since $\rho_{i} \in C^{k, \alpha} (M)$, we infer $\rho_{i} (u^{n}_{i} - u) \in C^{k, \alpha} (M)$. Hence
\begin{equation}\label{thm_high_global_2}
u^{n} -u = \sum_{i=1}^{m} \rho_{i} (u^{n}_{i} - u) \in C^{k, \alpha} (M).
\end{equation}
To estimate $\| u^{n} -u \|_{C^{k, \alpha} (M)}$, also by \eqref{thm_high_global_2}, it suffices to estimate the derivatives of $u^{n}_{i} -u$ up to order $k$ in a neighborhood of $\mathrm{supp} \rho_{i}$ for each $i$. We can cover $\mathrm{supp} \rho_{i}$ by open coordinate charts $W_{1}, \dots, W_{s}$ such that
\[
\bigcup_{j=1}^{s} K_{j} = \mathrm{supp} \rho_{i} \subset \bigcup_{j=1}^{s} W_{j} \subset M_{i} \setminus \overline{\gamma_{i}},
\]
where $K_{j} \subset \subset W_{j}$ for each $j$. It in turn suffices to estimate $u^{n}_{i} -u$ in a neighborhood of $K_{j}$ in $W_{j}$ for each $j$. Without loss of generality, we may assume $M_{i} \setminus \overline{\gamma_{i}}$ itself has coordinates $(x_{1}, \dots, x_{d})$, and estimate the usual partial derivatives of $u^{n}_{i} -u$ with respect to $(x_{1}, \dots, x_{d})$.

We claim there are an open neighborhood $V_{k}$ of $\mathrm{supp} \rho_{i}$ and a constant $C_{i}$ such that
\begin{equation}\label{thm_high_global_3}
\| u^{n}_{i} - u \|_{C^{k,\alpha}(V_{k})} \le C_{i} \| u^{n}_{i} - u \|_{C^{0}(M_{i})},
\end{equation}
where $\overline{V_{k}} \subset M_{i} \setminus \overline{\gamma_{i}}$, and $C_{i}$ only depends on $i$, $k$, $\alpha$ and $V_{k}$. Since $\mathfrak{L} (u^{n}_{i} -u) =0$, this follows from \cite[Problem~6.1]{Gilbarg_Trudinger} when $\partial M_{i} = \overline{\gamma_{i}}$. (Note that, by \cite{Gilbarg_Trudinger}, this is true when $\alpha >0$. It certainly also holds for $C^{k,0}$ as $\| \cdot \|_{C^{k,0}} \le \| \cdot \|_{C^{k, \alpha}}$.)

When $\partial M_{i} \ne \overline{\gamma_{i}}$, we also have \eqref{thm_high_global_3}. Here are more details. We may identify $M_{i} \setminus \overline{\gamma_{i}}$ with a relatively open subset of the half space $\mathbb{R}^{d}_{+} := \{ (x_{1}, \dots, x_{d}) \in \mathbb{R}^{d} \mid x_{d} \ge 0 \}$, and $\partial (M_{i} \setminus \overline{\gamma_{i}})$ is contained in $\partial \mathbb{R}^{d}_{+} = \{ (x_{1}, \dots, x_{d}) \in \mathbb{R}^{d} \mid x_{d} = 0 \}$. By the fact $(u^{n}_{i} - u)|_{\partial (M_{i} \setminus \overline{\gamma_{i}})} =0$ and \cite[Lemma~6.4]{Gilbarg_Trudinger}, we see \eqref{thm_high_global_3} holds when $k \le 2$. Let's consider the case of $k=3$. By \eqref{eqn_operator_local}, we have
\begin{equation}\label{thm_high_global_4}
\mathfrak{L} (u^{n}_{i} -u) = -\sum_{s=1}^{d} \sum_{t=1}^{d} g^{st} \tfrac{\partial^{2}}{\partial x_{s} \partial x_{t}} (u^{n}_{i} -u) + \sum_{s=1}^{d} b^{s} \tfrac{\partial}{\partial x_{s}} (u^{n}_{i} -u) + c (u^{n}_{i} -u).
\end{equation}
Taking the partial derivative of \eqref{thm_high_global_4} with respect to $x_{r}$ for $1 \le r \le d-1$, we see $\widetilde{\mathfrak{L}} [\frac{\partial}{\partial x_{r}} (u^{n}_{i} -u)] = h$, where $\widetilde{\mathfrak{L}}$ is an elliptic operator of 2nd order with $C^{\infty}$ coefficients, and $h$ is a function involving the derivatives of $u^{n}_{i} -u$ up to order $2$ together with those of the above $g^{st}$ and $b^{s}$. Since $r \le d-1$, we have $[\frac{\partial}{\partial x_{r}} (u^{n}_{i} -u)]|_{\partial (M_{i} \setminus \overline{\gamma_{i}})} =0$. By \cite[Lemma~6.4]{Gilbarg_Trudinger} again, we have
\[
\| \tfrac{\partial}{\partial x_{r}} (u^{n}_{i} - u) \|_{C^{2,\alpha}(V_{3})} \le \widetilde{C}_{i} (\| \tfrac{\partial}{\partial x_{r}} (u^{n}_{i} - u) \|_{C^{0}(V_{2})} + \| h \|_{C^{0,\alpha}(V_{2})}),
\]
where $V_{3}$ is an open neighborhood of $\mathrm{supp} \rho_{i}$ such that $\overline{V_{3}} \subset V_{2}$, and $\widetilde{C}_{i}$ only depends on $i$, $\alpha$ and $V_{3}$. By the fact that \eqref{thm_high_global_3} holds for $k=2$, we infer $\| \frac{\partial}{\partial x_{r}} (u^{n}_{i} -u) \|_{C^{2, \alpha} (V_{3})}$ is dominated by $\| u^{n}_{i} - u \|_{C^{0}(M_{i})}$ for all $r \le d-1$. In other words, the norms of all partial derivatives $\| \tfrac{\partial^{3}}{\partial x_{s} \partial x_{t} \partial x_{r}} (u^{n}_{i} -u) \|_{C^{0,\alpha} (V_{3})}$ of order $3$, except $\| \tfrac{\partial^{3}}{\partial x_{d}^{3}} (u^{n}_{i} -u) \|_{C^{0,\alpha} (V_{3})}$, is dominated by $\| u^{n}_{i} - u \|_{C^{0}(M_{i})}$. On the other hand, taking the partial derivative of \eqref{thm_high_global_4} with respect to $x_{d}$, we see $\tfrac{\partial^{3}}{\partial x_{d}^{3}} (u^{n}_{i} -u)$ can be expressed by other partial derivatives of $u^{n}_{i} -u$ of orders no more than $3$. So $\| \tfrac{\partial^{3}}{\partial x_{d}^{3}} (u^{n}_{i} -u) \|_{C^{0,\alpha} (V_{3})}$ is dominated as well. In summary, \eqref{thm_high_global_3} is verified when $k=3$. We obtain \eqref{thm_high_global_3} in general by induction on $k$.

By \eqref{thm_high_global_3} and the fact that $\rho_{i} \in C^{k,\alpha} (M)$ for all $i$, we get \eqref{thm_high_global_1}. It's necessary to point out that the $C$ in \eqref{thm_high_global_1} only depends on $k$, $\alpha$, $\mathrm{supp} \rho_{i}$ and $\| \rho_{i} \|_{C^{k,\alpha} (M)}$ for all $i$. So this $C$ is independent of $n$, $u$, and $u^{0}$.
\end{proof}

Finally, we prove Theorem \ref{thm_high_local}.
\begin{proof}[Proof of Theorem \ref{thm_high_local}]
(1). By Theorem \ref{thm_high_global}, we have $u^{n-1} - u \in C^{0,1} (M)$. By Rademacher's theorem (see Theorem 5 in \cite[\S4.2.3]{evans_gariepy}), this is equivalent to say $u^{n-1} - u \in W^{1,\infty} (M)$. Thus, for each $i$, there exists a solution $v_{i} \in H^{1} (M_{i})$ to
\begin{equation}\label{thm_high_local_3}
\left\{
\begin{aligned}
\mathfrak{L} v_{i}  & = 0, & \text{in} \ M_{i} \setminus \partial M_{i};
\\
v_{i}  & = u^{n-1} -u , & \text{on} \ \partial M_{i}.
\end{aligned}
\right.
\end{equation}
Since $\partial M_{i}$ is Lipschitz, by \cite[Corollary~8.28]{Gilbarg_Trudinger}, the solution $v_{i} \in C^{0} (M_{i})$. We know $u^{n}_{i} -u \in C^{0} (M_{i})$ is also a solution to \eqref{thm_high_local_3}. By the uniqueness in Lemma \ref{lem_algorithm_wellpose}, we infer $u^{n}_{i} -u = v_{i} \in H^{1} (M_{i})$.

We first estimate $\| u^{n}_{i} -u \|_{L^{2}(M_{i})}$. Since $u^{n}_{i} -u$ solves \eqref{thm_high_global_3}, by the weak maximum principle \ref{cor_maximum}, we have
\begin{align}\label{thm_high_local_4}
\| u^{n}_{i} -u \|_{L^{2}(M_{i})} & \le |M_{i}|^{\frac{1}{2}} \| u^{n}_{i} -u \|_{C^{0}(M_{i})} = |M_{i}|^{\frac{1}{2}} \| u^{n-1} -u \|_{C^{0}(\partial M_{i})} \nonumber \\
& \le |M_{i}|^{\frac{1}{2}} \| u^{n-1} -u \|_{C^{0} (M_{i})},
\end{align}
where $|M_{i}|$ is the volume of $M_{i}$.

Second, we estimate $\| \nabla (u^{n}_{i} -u) \|_{L^{2}(M_{i})}$. In the following, the constants $A_{j}$'s are all positive and independent of $n$, $u$, $u^{0}$, and the index $i$ of subdomains.

Since $u^{n}_{i}|_{\partial M_{i}} = u^{n-1}|_{\partial M_{i}}$, we have $u^{n}_{i} -u^{n-1} = (u^{n}_{i} -u) - (u^{n-1} - u) \in H^{1}_{0} (M_{i})$. Integrating by parts, we see
\begin{align}\label{thm_high_local_5}
& (\mathfrak{L} (u^{n}_{i} -u^{n-1}), u^{n}_{i} -u^{n-1})_{M_{i}} \nonumber \\
= & \int_{M_{i}} \langle \nabla (u^{n}_{i} -u^{n-1}), \nabla (u^{n}_{i} -u^{n-1}) \rangle  + \int_{M_{i}} \langle \vec{b}, \nabla (u^{n}_{i} -u^{n-1}) \rangle \cdot (u^{n}_{i} -u^{n-1}) \nonumber \\
& + \int_{M_{i}} c (u^{n}_{i} -u^{n-1})^{2},
\end{align}
where $(\cdot, \cdot)_{M_{i}}$ is the dual pairing between $H^{-1}(M_{i})$ and $H^{1}_{0}(M_{i})$. We also see
\[
\mathfrak{L} (u^{n}_{i} -u^{n-1})= \mathfrak{L} (u^{n}_{i} -u) + \mathfrak{L} (u - u^{n-1}) = \mathfrak{L} (u - u^{n-1})
\]
because $\mathfrak{L} (u^{n}_{i} -u) =0$. By \eqref{thm_high_local_5} and the Poincar\'{e} inequality, also by the fact $c \ge 0$ and $\vec{b}$ is bounded in \eqref{eqn_operator}, there exist positive constants $A_{1}$ and $A_{2}$ such that
\begin{align*}
& \| \nabla (u^{n}_{i} -u^{n-1}) \|^{2}_{L^{2}(M_{i})} - A_{1} \| \nabla (u^{n}_{i} -u^{n-1}) \|_{L^{2}(M_{i})} \cdot \| u^{n}_{i} -u^{n-1} \|_{L^{2}(M_{i})} \\
\le & (\mathfrak{L} (u^{n}_{i} -u^{n-1}), u^{n}_{i} -u^{n-1})_{M_{i}} =  (\mathfrak{L} (u - u^{n-1}), u^{n}_{i} -u^{n-1})_{M_{i}} \\
\le & \| \mathfrak{L} (u^{n-1} -u) \|_{H^{-1}(M_{i})} \cdot \| u^{n}_{i} -u^{n-1} \|_{H^{1}(M_{i})} \\
\le & A_{2} \| u^{n-1} -u \|_{H^{1}(M_{i})} \cdot \| \nabla (u^{n}_{i} -u^{n-1}) \|_{L^{2}(M_{i})},
\end{align*}
Thus
\begin{align*}
& \| \nabla (u^{n}_{i} -u^{n-1}) \|_{L^{2}(M_{i})} \leq A_{1} \| u^{n}_{i} -u^{n-1} \|_{L^{2}(M_{i})} +  A_{2} \| u^{n-1} -u \|_{H^{1}(M_{i})} \\
\le & A_{1} \| u^{n}_{i} -u \|_{L^{2}(M_{i})} + A_{1} \| u^{n-1} -u \|_{L^{2}(M_{i})} +  A_{2} \| u^{n-1} -u \|_{H^{1}(M_{i})}.
\end{align*}
Therefore,
\begin{align}\label{thm_high_local_6}
& \| \nabla (u^{n}_{i} -u) \|_{L^{2}(M_{i})} \le \| \nabla (u^{n}_{i} -u^{n-1}) \|_{L^{2}(M_{i})} + \| \nabla (u^{n-1} -u) \|_{L^{2}(M_{i})} \nonumber \\
\le & A_{1} \| u^{n}_{i} -u \|_{L^{2}(M_{i})} + (1+ A_{1} + A_{2}) \| u^{n-1} -u \|_{H^{1}(M_{i})}.
\end{align}
Combining \eqref{thm_high_local_4} with \eqref{thm_high_local_6}, we obtain
\[
\| u^{n}_{i} -u \|_{H^{1}(M_{i})} \le A_{3} \| u^{n-1} -u \|_{C^{0} (M_{i})} + A_{4} \| u^{n-1} -u \|_{H^{1}(M_{i})}.
\]

Note that $\| \cdot \|_{H^{1} (M_{i})}$ is dominated by $\| \cdot \|_{W^{1,\infty} (M_{i})}$. By Rademacher's theorem again, this is further dominated by $\| \cdot \|_{C^{0,1} (M_{i})}$. As a result,
\[
\| u^{n}_{i} -u \|_{H^{1}(M_{i})} \le A_{5} \| u^{n-1} -u \|_{C^{0,1} (M_{i})}.
\]
Now \eqref{thm_high_local_1} follows from \eqref{thm_high_global_1}.

(2). We know $u^{n}_{j} -u$ is a solution to \eqref{thm_high_local_3} with $i=j$. By Theorem \ref{thm_high_global}, we have $u^{n-1} -u \in C^{k, \alpha}$. Since $\partial M_{j}$ is $C^{k, \alpha}$, by \cite[Theorem~6.19~\&~Problem~6.2]{Gilbarg_Trudinger}, we infer $u^{n}_{j} - u \in C^{k,\alpha}(M_{j})$ and
\[
\| u^{n}_{j} - u \|_{C^{k,\alpha}(M_{j})} \le \widetilde{C}_{j} (\| u^{n}_{j} - u \|_{C^{0}(M_{j})} + \| u^{n-1} - u \|_{C^{k,\alpha}(M_{j})}),
\]
where $\widetilde{C}_{j}$ is independent of $n$, $u$ and $u^{0}$. Again, since $u^{n}_{j} -u$ is a solution to \eqref{thm_high_local_3}, by the weak maximum principle \ref{cor_maximum}, we see
\[
\| u^{n}_{j} - u \|_{C^{0}(M_{j})} = \| u^{n}_{j} - u \|_{C^{0}(\partial M_{j})} = \| u^{n-1} - u \|_{C^{0}(\partial M_{j})} \le \| u^{n-1} - u \|_{C^{k,\alpha}(M_{j})}
\]
and hence
\[
\| u^{n}_{j} - u \|_{C^{k,\alpha}(M_{j})} \le (1 + \widetilde{C}_{j}) \| u^{n-1} - u \|_{C^{k,\alpha}(M_{j})}.
\]
Now \eqref{thm_high_local_2} follows from \eqref{thm_high_global_1}.
\end{proof}

\section{Generalization}\label{sec_general}
We discuss some generalizations of the main results of this paper.

\subsection{Convergence in General}\label{subsec_converge_general}
As mentioned before, theorems in $\S$\ref{sec_main} are insufficient, under Assumptions \ref{asp_wellpose} and \ref{asp_decomposition} alone, to establish the geometric convergence of Algorithm \ref{alg_continuous_parallel}. However, we can indeed prove the general Theorem \ref{thm_converge} below ensuring the geometric convergence in full generality. Before formulating this theorem, we need to introduce a new type of winding number which differs slightly from the one in Definition \ref{def_winding}.

Let $c$ be the function in \eqref{eqn_operator}. Define $\widetilde{I}_{0} := \emptyset$ and $\widetilde{I}_{1} := I_{1} \cup P(c)$, where $I_{1}$ is the one defined prior to Definition \ref{def_winding} and
\[
P(c) := \{ i \mid \text{$c>0$ somewhere in $M_{i}$} \}.
\]
We first point out that $\widetilde{I}_{1} \ne \emptyset$ under Assumption \ref{asp_wellpose}. In fact, for the (1) of Assumption \ref{asp_wellpose}, we see $P(c) \ne \emptyset$; for the (2) of Assumption \ref{asp_wellpose}, by Lemma \ref{lem_winding_start}, we have $I_{1} \ne \emptyset$. (Note that $I_{1} = \emptyset$ if $\partial M = \emptyset$.) When $n>1$, we define inductively
\[
\widetilde{I}_{n} := \widetilde{I}_{n-1} \cup \left\{ i \middle| 1 \le i \le m, \partial M_{i} \cap \left( \bigcup_{j \in \widetilde{I}_{n-1}} U_{j} \right) \ne \emptyset \right\}.
\]
Clearly, $\widetilde{I}_{n} \subseteq \widetilde{I}_{n+1}$ for each $n$. If $\widetilde{I}_{n} = \widetilde{I}_{n+1}$, then $\widetilde{I}_{n} = \widetilde{I}_{k}$ for all $k>n$. There exists an $\widetilde{N} \ge 1$ such that $\widetilde{I}_{\widetilde{N}-1} \ne \widetilde{I}_{\widetilde{N}} = \widetilde{I}_{\widetilde{N}+1}$.

\begin{definition}\label{def_general_winding}
We call the above $\widetilde{N}$ the \emph{winding number} of $(c, \mathcal{D}, \rho)$. We also let $\widetilde{N}(c, \mathcal{D}, \rho)$ denote it.
\end{definition}

There are two obvious differences between $\widetilde{N}(c, \mathcal{D}, \rho)$ and $N(\mathcal{D}, \rho)$. First, $N(\mathcal{D}, \rho)$ only depends on $\mathcal{D}$ and $\rho$, but $\widetilde{N}(c, \mathcal{D}, \rho)$ depends on $c$ further. Second, in practice, $N(\mathcal{D}, \rho)$ is useful only when $\partial M \ne \emptyset$, but $\widetilde{N}(c, \mathcal{D}, \rho)$ has no such shortcoming. If $\partial M \ne \emptyset$, we can prove easily $\widetilde{I}_{n} \supseteq I_{n}$ for all $n$ by induction on $n$. Similar to Proposition \ref{prop_winding}, we have the following result.
\begin{proposition}\label{prop_general_winding}
$\widetilde{I}_{\widetilde{N}} = \{ i \mid 1 \le i \le m \}$.
\end{proposition}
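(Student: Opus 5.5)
We adapt the proof of Proposition \ref{prop_winding}, changing only its final stage: instead of locating a subdomain whose boundary meets $\partial M$ in a set with relative interior, we locate a subdomain in $\widetilde{I}_{1} = I_{1} \cup P(c)$ and then walk back along the chain. Fix $i_{0}$ with $1 \le i_{0} \le m$. As in that proof, set $J_{1} = \{ i_{0} \}$, $D_{n-1} = \bigcup_{k=1}^{n-1} \bigcup_{j \in J_{k}} M_{j}$, and let $J_{n}$ be the set of $j$ such that $U_{j}$ contains a point-set boundary point of $D_{n-1}$ in $M$. The argument given there is purely topological and does not use $\partial M \ne \emptyset$. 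It shows that the $J_{n}$ are pairwise disjoint, that they stabilize at some $n_{0}$, and that $D_{n_{0}} = M$ by connectedness of $M$ and because $\bigcup_{j} U_{j} = M$. It also shows that every boundary point of $D_{n}$ lies in $\partial M_{j}$ for some $j \in \bigcup_{k \le n} J_{k}$.

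Next we produce an index $j_{1} \in \widetilde{I}_{1}$ with $j_{1} \in J_{n_{1}}$ for some $n_{1} \le n_{0}$. We split according to Assumption \ref{asp_wellpose}.
\begin{itemize}
\item If $\partial M \ne \emptyset$, we take $j_{1}$ exactly as in the proof of Proposition \ref{prop_winding}, via Lemma \ref{lem_boundary_interior} and a Baire-type argument on the closed sets $\partial M_{j} \cap \partial M$. This gives $j_{1} \in I_{1} \subseteq \widetilde{I}_{1}$.
\item If $\partial M = \emptyset$, then (1) of Assumption \ref{asp_wellpose} holds, so $c(x) > 0$ at some $x \in M = D_{n_{0}}$. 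Then $x \in M_{j_{1}}$ for some $j_{1} \in J_{n_{1}}$, and $j_{1} \in P(c) \subseteq \widetilde{I}_{1}$.
\end{itemize}
Note that the set $\{ j \mid j \in \bigcup_{k} J_{k} \}$ covers all subdomains making up $D_{n_{0}}$, so in either case such a $j_{1}$ exists.

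Then we run the backward induction from the earlier proof. If $n_{1} = 1$, then $j_{1} = i_{0}$ and we are done. Otherwise $U_{j_{1}}$ contains a boundary point $x_{0}$ of $D_{n_{1}-1}$. This point lies in $\partial M_{j_{2}}$ for some $j_{2} \in J_{n_{2}}$ with $n_{2} < n_{1}$. Hence $\partial M_{j_{2}} \cap U_{j_{1}} \ne \emptyset$, and the defining recursion of $\widetilde{I}_{n}$ gives $j_{2} \in \widetilde{I}_{2}$. We iterate with the strictly decreasing indices $n_{1} > n_{2} > \cdots$, which must reach $n_{s} = 1$, so $j_{s} = i_{0} \in \widetilde{I}_{s}$.

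Finally, $\widetilde{I}_{s} \subseteq \widetilde{I}_{\widetilde{N}}$, because the sequence $\widetilde{I}_{n}$ is increasing and constant from $\widetilde{N}$ on. This holds even if $s > \widetilde{N}$. Therefore $i_{0} \in \widetilde{I}_{\widetilde{N}}$.

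The main obstacle is checking that the topological part of the earlier proof really does not rely on $\partial M \ne \emptyset$. In particular, we must confirm that boundary points of $D_{n}$ lie on the $\partial M_{j}$: when $\partial M = \emptyset$ the two notions of boundary agree, so this holds. We must also confirm that disjointness of the $J_{n}$ still holds, which it does because each $U_{j}$ is open and contained in $M_{j}$. Once this is in place, the only genuinely new input is the choice of $j_{1}$ through $P(c)$ in the case $\partial M = \emptyset$.
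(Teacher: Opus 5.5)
Your proposal is correct and matches the paper's own sketch: for $\partial M = \emptyset$ the paper also reruns the $J_{n}$, $D_{n}$ construction from the proof of Proposition \ref{prop_winding}. It then picks $j_{1} \in P(c) = \widetilde{I}_{1}$ at a point where $c>0$ and walks back along the chain, exactly as you do. The only cosmetic difference is the case $\partial M \ne \emptyset$: there the paper notes $\widetilde{I}_{\widetilde{N}} \supseteq \widetilde{I}_{N} \supseteq I_{N}$ and invokes Proposition \ref{prop_winding}, whereas you rerun the same chain argument with $\widetilde{I}_{n}$ in place of $I_{n}$.
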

\begin{proof}[Sketch of Proof]
If $\partial M \ne \emptyset$, then $\widetilde{I}_{\widetilde{N}} \supseteq \widetilde{I}_{N} \supseteq I_{N}$. The conclusion follows from Proposition \ref{prop_winding}. If $\partial M = \emptyset$, the conclusion is proved by an argument similar to the proof of Proposition \ref{prop_winding}. Particularly, we also define $D_{n}$ and obtain $D_{n_{0}} = M$ as we did in that proof. By the (1) of Assumption \ref{asp_wellpose}, there exists a $j_{1} \in J_{n_{1}}$ such that $j_{1} \in P(c) = \widetilde{I}_{1}$ and $n_{1} \le n_{0}$. Other part of the proof duplicates that proof.
\end{proof}

We list various properties of $\widetilde{N}(c, \mathcal{D}, \rho)$ in some special cases. If $c$ satisfies the assumption of Corollary \ref{cor_rate_nondegenerate}, i.e.,~$c>0$ somewhere in $M_{i}$ for all $i$ satisfying $\partial M_{i} = \overline{\gamma_{i}}$, then $\widetilde{I}_{1} = \{ i \mid 1 \le i \le m \}$ and hence $\widetilde{N}(c, \mathcal{D}, \rho) =1$. If $\partial M \ne \emptyset$, then $\widetilde{I}_{N} \supseteq I_{N} = \{ i \mid 1 \le i \le m \}$ and hence $\widetilde{N}(c, \mathcal{D}, \rho) \le N(\mathcal{D}, \rho)$. If $\partial M \ne \emptyset$ and $c=0$, then $\widetilde{N}(c, \mathcal{D}, \rho) = N(\mathcal{D}, \rho)$.

We can prove the following general theorem describing the geometric convergence of Algorithm \ref{alg_continuous_parallel} under Assumptions \ref{asp_wellpose} and \ref{asp_decomposition} alone.
\begin{theorem}\label{thm_converge}
Let $\widetilde{N} (c, \mathcal{D}, \rho)$ be the winding number of $(c, \mathcal{D}, \rho)$ (see Definition \ref{def_general_winding}). Let $u^{0} \in C^{0} (M)$ satisfying $u^{0}|_{\partial M} = \varphi$ be an arbitrary initial guess of Algorithm \ref{alg_continuous_parallel}. Let $u^{n}$ and $u^{n}_{i}$ be the approximations generated by Algorithm \ref{alg_continuous_parallel}.

Then the following hold:
\begin{enumerate}[(1)]
\item $\| u^{n} - u \|_{C^{0} (M)} \le \max_{1 \le i \le m} \{ \| u^{n}_{i} - u \|_{C^{0} (M_{i})} \} \le \| u^{n-1} - u \|_{C^{0} (M)}$ for $n>0$;

\item there exists a constant $L \in (0,1)$ such that, $\forall n \ge 0$,
\[
\| u^{n+\widetilde{N}} - u \|_{C^{0} (M)} \le L^{\widetilde{N}} \| u^{n} - u \|_{C^{0} (M)},
\]
where $L$ is independent of $n$, $u^{0}$ and $u$;

\item there exists a constant $C>0$ such that $\forall n \ge 0$,
\[
\| u^{n} - u \|_{C^{0} (M)} \le C L^{n} \| u^{0} - u \|_{C^{0} (M)},
\]
where $L$ is the one in (2), and $C$ is independent of $n$, $u^{0}$ and $u$;

\item if $\widetilde{N} >2$, then there exists a particular choice of $u^{0} \in C^{0} (M)$ with $u^{0}|_{\partial M} = \varphi$ such that $u^{0} \ne u$ and
\begin{equation}\label{thm_converge_1}
\| u^{n} - u \|_{C^{0} (M)} = \| u^{0} - u \|_{C^{0} (M)} \qquad \text{for $n<\widetilde{N} -2$};
\end{equation}
suppose additionally either (resp.~both) $M \ne \bigcup_{i \ne j} M_{i}$ for each $j$ or (resp.~and) $M_{i} \cap \partial M = \emptyset$ for every $i$ satisfying $\partial M_{i} = \overline{\gamma_{i}}$, then \eqref{thm_converge_1} also holds for $n < \widetilde{N} -1$ (resp.~$n < \widetilde{N}$).
\end{enumerate}
\end{theorem}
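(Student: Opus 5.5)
The plan is to follow the proofs of Theorems \ref{thm_rate}, \ref{thm_rate_boundary} and \ref{thm_slow} almost verbatim, with $I_{n}$ replaced by $\widetilde{I}_{n}$ and Proposition \ref{prop_winding} replaced by Proposition \ref{prop_general_winding}.

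Part (1) is literally Theorem \ref{thm_rate}(1). Its proof uses only the partition of unity and the weak maximum principle on each $M_{i}$; each $\partial M_{i}\neq\emptyset$ because $M_{i}\neq M$. Hence it holds with no assumption on $\partial M$ or $c$. By Propositions \ref{prop_error_operator} and \ref{prop_n_iteration_bound}, which do not use $\partial M\neq\emptyset$, everything reduces to estimating $T^{n-1}\tau$.

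The key step is the analogue of Proposition \ref{prop_iteration_function}: $T^{n}\tau<1$ on $\bigcup_{i\in\widetilde{I}_{n+1}}U_{i}$, proved by induction on $n$. For $n=0$, if $i\in I_{1}$ then $\theta_{i}<1$ on $U_{i}$ by Lemma \ref{lem_bound_perron}(4). If $i\in P(c)$ then $\theta_{i}<1$ on $M_{i}\setminus\partial M_{i}$ by Lemma \ref{lem_bound_perron}(5); on $U_{i}\cap\partial M$ we have $\theta_{i}=0$ there, since such points lie in $\partial M_{i}\setminus\overline{\gamma_{i}}$. Combined with $\theta_{j}\le1$ and $\rho_{i}>0$ on $U_{i}$, this gives $\tau<1$ there.

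For the inductive step, let $w_{j}$ be the subdomain solutions with boundary data $T^{k}\tau$, so $0\le w_{j}\le1$. For $i\in\widetilde{I}_{k+2}$ there are three cases:
\begin{enumerate}[(a)]
\item $i\in I_{1}$: the boundary data vanish on $\partial M_{i}\setminus\overline{\gamma_{i}}\neq\emptyset$.
\item $\partial M_{i}$ meets $\bigcup_{j\in\widetilde{I}_{k+1}}U_{j}$: the boundary data are $<1$ somewhere by the inductive hypothesis.
\item $i\in P(c)$: the constant $1$ is not a solution in $M_{i}$.
\end{enumerate}
In cases (a) and (b), the comparison with the constant $1$ (Lemma \ref{lem_perron}(2)/(4)) together with the strong maximum principle gives $w_{i}<1$ on $M_{i}\setminus\partial M_{i}$. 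In case (c), the strong maximum principle gives $w_{i}\equiv1$ or $w_{i}<1$, and $w_{i}\equiv1$ contradicts $\mathfrak{L}w_{i}=0$ at a point where $c>0$. In all cases $w_{i}<1$ on $U_{i}\setminus\partial M$, and on $\partial M$ everything vanishes, so $T^{k+1}\tau=\sum\rho_{j}w_{j}<1$ on $\bigcup_{i\in\widetilde{I}_{k+2}}U_{i}$.

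By Proposition \ref{prop_general_winding}, $T^{\widetilde{N}-1}\tau<1$ on all of $M$. Since $M$ is compact and $T^{\widetilde{N}-1}\tau$ is continuous, $\|T^{\widetilde{N}-1}\tau\|<1$. Setting $L:=\max\{\|T^{\widetilde{N}-1}\tau\|^{1/\widetilde{N}},\tfrac12\}$ guarantees $L\in(0,1)$ even when $\tau$ vanishes. This gives (2) exactly as in Theorem \ref{thm_rate_boundary}(1), and (3) follows from the same division-with-remainder argument with $C=\max_{0\le i<\widetilde{N}}\|T\|^{i}L^{-i}$.

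The main obstacle is part (4), the analogue of Lemma \ref{lem_slow}. That lemma used $c=0$ to conclude that $w_{j}=1$ when $v=1$ on $\partial M_{j}$. The plan is to use this only for $j\notin\widetilde{I}_{n}$; since $P(c)\subseteq\widetilde{I}_{1}$, we have $c\equiv0$ on every such $M_{j}$. One then shows by induction that $T^{n}v=1$ on $M\setminus\bigcup_{i\in\widetilde{I}_{n}}U_{i}$ (or with $\widetilde{I}_{n+1}$), starting from $v\in C^{0}_{0}(M)$ with $0\le v\le1$ and $v=1$ on $\bigcup_{i\notin\widetilde{I}_{2}}M_{i}$, which is chosen by Urysohn's lemma. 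Such $v$ exists because these $M_{i}$ avoid $\partial M$ by Lemma \ref{lem_boundary_neighborhood}(2) and $\widetilde{I}_{2}\supseteq I_{2}$. The bound $0\le T^{n}v\le1$ persists by Lemma \ref{lem_perron}(2). Finally, nonemptiness of $M\setminus\bigcup_{i\in\widetilde{I}_{n}}U_{i}$ comes from Lemma \ref{lem_winding}(2) applied to $\widetilde{I}$, as in Remark \ref{rmk_slow}. This yields (4) for $n<\widetilde{N}-2$. The extra hypotheses improve the range by one step each, exactly as in Lemma \ref{lem_slow} and Theorem \ref{thm_slow}: either by choosing $v=1$ on $\bigcup_{i\notin\widetilde{I}_{1}}M_{i}$, or by using $M\neq\bigcup_{i\neq j}M_{i}$ to get nonemptiness directly.
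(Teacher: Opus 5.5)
Your proposal is correct and follows the same route as the paper's sketch. Part (1) is Theorem \ref{thm_rate}(1); parts (2)--(3) rerun the induction of Proposition \ref{prop_iteration_function} with $\widetilde{I}_{n}$ and Proposition \ref{prop_general_winding}; and part (4) reruns Lemma \ref{lem_slow} with $v=1$ on $\bigcup_{i\notin\widetilde{I}_{2}}M_{i}$ (or on $\bigcup_{i\notin\widetilde{I}_{1}}M_{i}$). You also supply details the sketch leaves implicit: the extra case $i\in P(c)$ in the inductive step, and the fact that $c\equiv 0$ on $M_{j}$ for $j\notin\widetilde{I}_{n}$, which is what gives $w_{j}\equiv 1$ in the analogue of Lemma \ref{lem_slow}.
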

\begin{proof}[Sketch of Proof]
(1). This is (1) of Theorem \ref{thm_rate}.

(2). We study the $\tau$ in Lemma \ref{lem_bound_function}. Note that $\tau \in C^{0}_{0} (M)$, where $C^{0}_{0} (M) = C^{0} (M)$ if $\partial M = \emptyset$. Now $\tau <1$ on $\bigcup_{i \in \widetilde{I}_{1}} U_{i}$ because $\theta_{i}|_{U_{i}} <1$ when $i \in \widetilde{I}_{1}$. Following the proof of Proposition \ref{prop_iteration_function}, but with Proposition \ref{prop_general_winding} in place of Proposition \ref{prop_winding}, we obtain $0 \le T^{\widetilde{N} -1} \tau <1$. Other part of the proof duplicates that of (1) of Theorem \ref{thm_rate_boundary}.

(3). This follows immediately from (2). (See the proof of (2) of Theorem \ref{thm_rate_boundary}.)

(4). The proof is similar to that of Theorem \ref{thm_slow} (see also Remark \ref{rmk_slow}). Choose a $v \in C^{0}_{0} (M)$ satisfying $0 \le v \le 1$ and $v=1$ on $\bigcup_{i \notin \widetilde{I}_{2}} M_{i}$ (or on $\bigcup_{i \notin \widetilde{I}_{1}} M_{i}$ if $M_{i} \cap \partial M = \emptyset$ for every $i$ with $\partial M_{i} = \overline{\gamma_{i}}$). Then $u^{0} := u+v$ is the desired one.
\end{proof}

The (3) of Theorem \ref{thm_converge} already shows the geometric convergence of Algorithm \ref{alg_continuous_parallel} in full generality. If $\partial M \ne \emptyset$, then $\widetilde{N} (c, \mathcal{D}, \rho) \le N(\mathcal{D}, \rho)$ and the (1)-(3) of Theorem \ref{thm_converge} imply Theorem \ref{thm_rate_boundary}. If $c$ satisfies the assumption of Corollary \ref{cor_rate_nondegenerate}, then $\widetilde{N}=1$ and the (2) of Theorem \ref{thm_converge} implies Corollary \ref{cor_rate_nondegenerate}. Furthermore, if $M$ and $c$ satisfies the assumption of Theorem \ref{thm_slow}, then $\widetilde{N} (c, \mathcal{D}, \rho) = N (\mathcal{D}, \rho)$ and the (4) of Theorem \ref{thm_converge} degenerates to Theorem \ref{thm_slow}.

\subsection{Equations of General Type}\label{subsec_type}
We show that the linear elliptic operator \eqref{eqn_operator} indeed covers all second-order linear elliptic operators on manifolds. More precisely, an operator of general form on a Riemannian manifold $(M,g)$ can be reduced to the form of \eqref{eqn_operator} for some Riemannian metric $\tilde{g}$ which may differ from $g$. If $M$ is compact, as noted in $\S$\ref{subsec_function}, then both $g$ and $\tilde{g}$ yield the same uniform structures on the usual function spaces such as $C^{k, \alpha} (M)$ and $W^{k,p} (M)$. Consequently, the theory developed in this paper applies to the full class of second-order linear elliptic equations on manifolds.

Suppose $A$ is a $C^{\infty}$, symmetric and positive definite $(2,0)$-tensor field on $M$. Recall that a $(2,0)$-tensor field $A$ assigns to each $x \in M$ an $A(x) \in T_{x} M \otimes T_{x} M$, where $T_{x} M$ is the tangent space of $M$ at $x$. In local coordinates $(x_{1}, \dots, x_{d})$,
\[
A(x) = \sum_{i=1}^{d} \sum_{j=1}^{d} a^{ij} (x) \frac{\partial}{\partial x_{i}} \otimes \frac{\partial}{\partial x_{j}}.
\]
By $C^{\infty}$, we mean $a^{ij} (x)$ are $C^{\infty}$ with respect to $x$. By symmetric and positive definite, we mean $A(x)$ is a symmetric and positive definite bilinear form on $T^{*}_{x} (M)$, where $T^{*}_{x} (M)$ is the cotangent space of $M$ at $x$; equivalently, $(a^{ij} (x))_{d \times d}$ is a symmetric and positive definite matrix.

A general linear elliptic operator of second order takes the form (cf.~\cite[p.~83]{Aubin})
\begin{equation}\label{eqn_operator_general}
\mathfrak{L} u := - (A, \nabla^{2} u) + \langle \vec{b}, \nabla u \rangle_{g} + cu,
\end{equation}
where $(\cdot, \cdot)$ is the pointwise dual pairing between $T_{x} M \otimes T_{x} M$ and $T^{*}_{x} M \otimes T^{*}_{x} M$, $\nabla$ is the covariant differential associated with $g$, and $\langle \cdot, \cdot \rangle_{g}$ is the Riemannian metric $g$. In local coordinates, we have
\[
- (A, \nabla^{2} u) = - \sum_{i=1}^{d} \sum_{j=1}^{d} a^{ij} \frac{\partial^{2} u}{\partial x_{i} \partial x_{j}} + \sum_{i=1}^{d} \sum_{j=1}^{d} \sum_{k=1}^{d} a^{ij} \Gamma_{ij}^{k} \frac{\partial u}{\partial x_{k}},
\]
where $\Gamma_{ij}^{k}$ are the Christoffel symbols (see \cite[p.~3]{Aubin}).

Via $g$, we may identify $T_{x} M$ with $T^{*}_{x} M$ by the Riesz representation theorem (cf.~\cite[p.~11]{conway}). As a result, $T_{x} M \otimes T^{*}_{x} M$, $T_{x} M \otimes T_{x} M$ and $T^{*}_{x} M \otimes T^{*}_{x} M$ are mutually identified. Assigning the identity map $I(x) \colon T_{x} M \rightarrow T_{x} M$ to each $x$; this defines a $C^{\infty}$ $(1,1)$-tensor field $I$, which is also identified with a $(2,0)$-tensor field $I_{g}$. Now
\[
\Delta_{g} u = (I_{g}, \nabla^{2} u),
\]
where $\Delta_{g} u$ is the Laplace operator associated with $g$. It's easy to see $I_{g}$ is symmetric and positive definite, so $\Delta_{g} u$ is a special case of $(A, \nabla^{2} u)$.

Let's construct a new Riemannian metric $\tilde{g}$ on $M$. Since $A$ is symmetric and positive definite, it defines an inner product on $T^{*}_{x} M$ at each $x$. By the Riesz representation again, this induces an inner product $\tilde{g}_{x}$ on $T_{x} M$. Since $A$ is $C^{\infty}$, the resulting $(0,2)$-tensor field $\tilde{g}$ is a $C^{\infty}$ Riemannian metric, and we have $A= I_{\tilde{g}}$. Here $I_{\tilde{g}}$ is defined analogously to $I_{g}$ but with respect to $\tilde{g}$. Then
\[
- (A, \nabla^{2} u) = - (I_{\tilde{g}}, \nabla^{2} u).
\]
Note that $\nabla$ is the covariant differential associated with $g$, not with $\tilde{g}$. Let $\widetilde{\nabla}$ be the one associated with $\tilde{g}$. For $C^{\infty}$ vector fields $X$ and $Y$,
\[
\nabla^{2} u (X,Y) = YXu - (\nabla_{Y} X) u, \qquad \widetilde{\nabla}^{2} u (X,Y) = YXu - (\widetilde{\nabla}_{Y} X) u
\]
and hence
\[
(\nabla^{2} u - \widetilde{\nabla}^{2} u) (X,Y) = (\widetilde{\nabla}_{Y} X - \nabla_{Y} X) u.
\]
It's straightforward to check that the map $(X,Y) \mapsto (\widetilde{\nabla}_{Y} X - \nabla_{Y} X)$ defines a $C^{\infty}$ $(1,2)$-tensor field. Thus
\begin{align*}
-(A, \nabla^{2} u) & = -(A, \widetilde{\nabla}^{2} u) + (A, \widetilde{\nabla}^{2} u - \nabla^{2} u) = -(I_{\tilde{g}}, \widetilde{\nabla}^{2} u) + Zu \\
& = - \Delta_{\tilde{g}} u + Zu,
\end{align*}
where $Z$ is a $C^{\infty}$ vector field. So the $\mathfrak{L}$ in \eqref{eqn_operator_general} equals
\begin{equation}\label{eqn_operator_reduce}
\mathfrak{L} u = - \Delta_{\tilde{g}} u + \langle \vec{b} + Z, \widetilde{\nabla} u \rangle_{\tilde{g}} + cu.
\end{equation}
Thus the general operator \eqref{eqn_operator_general} reduces to the form \eqref{eqn_operator}, which is ostensibly, but not actually, more special.

It's necessary to point out that the $c$ in \eqref{eqn_operator_reduce} equals that in \eqref{eqn_operator_general}. Therefore, \eqref{eqn_operator_general} satisfies Assumption \ref{asp_wellpose} if and only if \eqref{eqn_operator_reduce} does. The entire theory of this paper carries over to general second-order linear elliptic equations on compact manifolds.

\subsection{Equations with Low Regularity}
We address another generalization of \eqref{eqn_operator}. Suppose now the coefficients in \eqref{eqn_operator} have regularities lower than $C^{\infty}$. For example, the Riemannian metric $g$, the vector field $\vec{b}$ and the function $c$ are of regularities $C^{k, \alpha}$ or $W^{k,p}$. With suitable modifications, the principal results of this paper remain valid, provided the coefficients are not excessively singular.

In the low-regularity setting, the interpretation of \eqref{eqn_problem} requires $u$ to be more regular than an arbitrary distribution. Likewise, \eqref{alg_continuous_parallel_1} often requires the $\rho_{i}$ in Assumption \ref{asp_decomposition} to be more regular than merely continuous. For instance, if $g$ is $C^{1}$, and both $\vec{b}$ and $c$ are $C^{0}$, then it's safe to require $u \in H^{1} (M)$ and $\rho_{i} \in C^{0,1} (M)$. Furthermore, the maximum principle \ref{thm_maximum} plays a central role in our argument, and it relies on the continuity of the involved functions. So it's good to assume both $u$ and $u^{0}$ are in $C^{0} (M) \cap H^{1} (M)$. By global estimate on continuity (\cite[Corollary~8.28]{Gilbarg_Trudinger}) and the maximum principle for weak solutions (see \cite[$\S$8.1~\&~8.7]{Gilbarg_Trudinger}), these assumptions already suffice to establish all the conclusions of the theorems in this paper (including Theorem \ref{thm_converge}), with the exception of Theorems \ref{thm_high_global} and \ref{thm_high_local} and their corollaries. Note also that, in this setting, if we drop the continuity assumption on $u$ and $u^{0}$ while retaining the other hypotheses, the local H\"{o}lder estimates (\cite[Theorems~8.22~\&~8.27]{Gilbarg_Trudinger}) still imply the continuity of $u^{n} - u$ (for $n>0$) and $u^{n}_{i} - u$ (for $n>1$), as well as their geometric convergence in $C^{0}$, although the continuity of $u^{n}$ and $u^{n}_{i}$ themselves is not guaranteed.

The significant difference lies in Theorems \ref{thm_high_global} and \ref{thm_high_local} and their corollaries. For example, even if $\rho_{i} \in C^{\infty} (M)$, we can no longer conclude $(u^{n} - u) \in C^{\infty} (M)$. Nevertheless, one can still obtain convergence of derivatives in various senses--ranging from weak to strong--up to a certain order. The admissible senses of convergence, as well as the maximal order of differentiability, are determined by the regularities of \eqref{eqn_operator}, $\rho_{i}$, and $\partial M_{i}$. The proof is analogous to those of Theorems \ref{thm_high_global} and \ref{thm_high_local} and relies on the classical regularity theory of second-order linear elliptic equations (see e.g.,~Chapters 6, 8 and 9 in \cite{Gilbarg_Trudinger}).

\section{Conclusion}\label{sec_conclusion}
This paper presents an in-depth convergence theory for the continuous DDM, Algorithm \ref{alg_continuous_parallel}, for solving the elliptic boundary value problem \eqref{eqn_problem} on compact Riemannian manifolds. As shown in $\S$\ref{subsec_type}, the problem \eqref{eqn_problem} indeed covers the entire class of second-order linear elliptic equations on manifolds.

We outline a proof of the general Theorem \ref{thm_converge}, which thoroughly characterizes the convergence of Algorithm \ref{alg_continuous_parallel} in $C^{0}$-norm under Assumptions \ref{asp_wellpose} and \ref{asp_decomposition}, establishing in particular its geometric convergence. Some special cases of Theorem \ref{thm_converge} are focused; in these cases, Theorem \ref{thm_converge} splits into three results: Corollary \ref{cor_rate_nondegenerate}, Theorems \ref{thm_rate_boundary} and \ref{thm_slow}. We prove the three results in full detail. Although Theorem \ref{thm_converge} is technically more involved, its proof idea is parallel to that of the special cases. Readers who are acquainted with the special cases will have no difficulty in filling in the general details along the lines sketched in $\S$\ref{subsec_converge_general}.

We also derive several estimates on the convergence rate in $C^{0}$-norm, which are collected in Theorems \ref{thm_bound}, \ref{thm_comparison}, and \ref{thm_refine}. These results reflect how the geometric aspect of the domain decomposition and the equation itself affect the convergence.

Finally, Theorems \ref{thm_high_global} and \ref{thm_high_local} demonstrate that $C^{0}$-convergence implies convergence of high-order derivatives under suitable regularity assumptions.

\section*{Acknowledgements}
I thank Yiyan Xu for various discussions. This work was partially supported by NSFC 11871272.


\end{document}